\documentclass{article}
\pdfoutput=1
\usepackage{graphicx}
\usepackage{latexsym,amsmath, amsfonts,amscd, amsthm, amssymb}
\usepackage{epsfig}
\usepackage{changebar}
\usepackage{color}
\usepackage{bm}
\usepackage{tikz}
\usetikzlibrary{arrows,backgrounds,snakes}

\topmargin-.8in \textheight9.4in \oddsidemargin0in \textwidth6.7in

\newtheoremstyle{plainNoItalics}{}{}{\normalfont}{}{\bfseries}{.}{ }{}

\theoremstyle{plain}
\newtheorem{thm}{Theorem}[section]

\theoremstyle{plainNoItalics}

\newtheorem{defn}[thm]{Definition}
\newtheorem{rem}[thm]{Remark}
\newtheorem{prop}[thm]{Proposition}
\newtheorem{exa}[thm]{Example}

\newcommand{\beq}{\begin{equation}}
\newcommand{\eeq}{\end{equation}}
\newcommand{\beqa}{\begin{eqnarray}}
\newcommand{\eeqa}{\end{eqnarray}}
\newcommand{\bit}{\begin{itemize}}
\newcommand{\eit}{\end{itemize}}
\newcommand{\bedef}{\begin{defn}}
\newcommand{\edefn}{\end{defn}}
\newcommand{\bpro}{\begin{prop}}
\newcommand{\epro}{\end{prop}}

\newcommand{\eps}{\varepsilon}

\newcommand{\bec}{\begin{array}{c}}
\newcommand{\ec}{\end{array}}

\renewcommand{\vec}[1]{{\bm #1}}


\def\Box{\mbox{ }\rule[0pt]{1.5ex}{1.5ex}}


\begin{document}



\begin{center}
{\bf
Implicit-Explicit Integral Deferred Correction Methods for Stiff Problems 
}
\end{center}
\vspace{.2in}
\centerline{  
Sebastiano Boscarino \footnote{Department of Mathematics and Computer Science, University of Catania, Catania, 95127, E-mail: boscarino@dmi.unict.it}, 
Jing-Mei
Qiu\footnote{Department of Mathematics, University of Houston,
Houston, 77004. E-mail: jingqiu@math.uh.edu. Research supported by
Air Force Office of Scientific Computing grant FA9550-16-1-0179, NSF grant DMS-1522777 and University of Houston.},
Giovanni Russo \footnote{Department of Mathematics and Computer Science, University of Catania, Catania, 95125, E-mail: russo@dmi.unict.it} 
}

\bigskip
\noindent
{\bf Abstract.}

{The main goal of this paper is to investigate the order reduction phenomenon that appears in the integral deferred correction (InDC) methods based on implicit-explicit (IMEX) Runge-Kutta (R-K) schemes when applied to a class of stiff problems characterized by a small positive parameter $\eps$, called singular perturbation problems (SPPs).
In particular, an error analysis is presented for these implicit-explicit InDC (InDC-IMEX) methods when applied to SPPs.
In our error estimate, we expand the global error in powers of $\eps$ and show that its coefficients are global errors of the corresponding method applied to a sequence of differential algebraic systems. 
A study of these errors in the expansion yields error bounds 
and it reveals the phenomenon of
order reduction.  
In our analysis we assume uniform quadrature nodes excluding the left-most point in the InDC method and the globally stiffly accurate property for the IMEX R-K scheme. 
Numerical results for the Van der Pol equation and PDE applications are presented to illustrate our theoretical findings.

\bigskip
\noindent {\bf Keywords:} Stiff Problems, Implicit-Explicit, Runge-Kutta methods, Integral deferred correction methods, Differential algebraic systems.

\bigskip
\noindent {\bf AMS Subject Classification Indices:}
Stiff equations 65L04, Multistep, Runge-Kutta and extrapolation methods 65L06, Extrapolation to the limit, deferred corrections 65B05, Methods for differential-algebraic equations 65L80.
\vfill

\newpage

%

\section{Introduction}
\label{sec1}
\setcounter{equation}{0}
\setcounter{figure}{0}
\setcounter{table}{0}

Several physical phenomena of great relevance for applications are described by {autonomous} stiff systems of ordinary differential equations (ODEs) of the form
\begin{eqnarray}\label{start}
U' = F(U) + \frac{1}{\varepsilon}G(U), \ \ \ U(t_0) = U_0, 
\end{eqnarray}
where $F, G: \mathbb{R}^n \rightarrow \mathbb{R}^n$ are sufficiently smooth functions with different stiff properties and $\varepsilon$ is the stiffness parameters.
Usually, system (\ref{start}) with a large numbers of equations may arise from spatial discretization of a system of partial differential equations, such as convection-diffusion problems, diffusion-reaction ones  and hyperbolic systems with relaxation, (\cite{ascher1997implicit}, \cite{boscarino2010class}, \cite{pareschi2005implicit}, \cite{CarpenterKennedy}, \cite{liu1987hyperbolic}, \cite{zhong1996additive}), where the method of lines approach is usually used.

In order to be able to treat problems of the form (\ref{start}), it could be interesting to separate the non-stiff and the stiff
terms. In most cases $F(U)$ is non-linear and non-stiff and $G(U)$ contains the stiffness ${\varepsilon}$. Then it is desirable to develop
numerical methods which are explicit in $F$ and implicit in $G$. 
For the numerical integration of (\ref{start}), additive Runge-Kutta (R-K) methods are proposed, see for instance \cite{CarpenterKennedy, pareschi2005implicit, boscarino2010class, zhong1996additive}, and are chosen with the aim of efficiently integrating the system (\ref{start}).
Additive R-K methods combining implicit and explicit methods are also known in the literature as implicit-explicit (IMEX) R-K methods \cite{ascher1997implicit, CarpenterKennedy, pareschi2005implicit}.

We observe that system (\ref{start}) can be written as a system of $2n$ equations in the form 
\begin{eqnarray}\label{spp}
\begin{array}{l}
y' = f(y,z), \ \ \ y(t_0) = y_0,\\
\varepsilon z' = g(y,z), \ \ \ z(t_0) = z_0, 
\end{array}
\end{eqnarray} 
once we set $U = y + z$, $F(U) = f (y,z)$ and $G(U) = g(y, z)$. 
When splitting the system, one has to assign initial conditions for both $y$ and $z$, while only initial conditions on the sum are specified in the problem. 
Any choice of the initial value for $y$ and $z$, such $y_0+z_0=U_0$ will give the same dynamics for the sum $y+z$. Note however that such a splitting is adopted here for the purpose of analyzing the scheme. In practice, once the scheme is constructed, it is applied directly to systems in the form (\ref{start}), and no arbitrariness in the initial condition remains. 

System (\ref{spp}) is known in the literature as \emph{singular perturbetion problems} (SPPs). Classical books on SPPs are \cite{Tikhonov, Malley} and we also refer the reader to the book \cite{hairer1993solving2}. 
System (\ref{spp}) has several origins in applied mathematics and allows us to understand many phenomena observed for very stiff problems, for example {as mentioned in \cite{hairer1993solving2}: from fluid dynamics and results in linear boundary value problems containing  a small parameter $\eps$ (the viscosity coefficient), in the study of stiff nonlinear oscillators (such as the Van der Pol system with large value of the parameter) or in the study of chemical kinetics with slow and fast reactions.} In fact, when the parameter $\varepsilon$ is small, the corresponding differential equation is stiff, and when $\varepsilon$ tends to zero, the differential equation becomes
differential algebraic. A sequence of differential algebraic systems arises in the study of SPPs. {In \cite{hairer1993solving2} and in the original paper \cite{hairer1988error}, the authors,} {by studying these differential algebraic systems, offered a general framework in order to derive rigorous convergence estimates of most of the implicit R-K methods presented in the literature when applied to SPPs (\ref{spp}) and showed that these methods suffer from the phenomenon of order reduction in the stiff regime.}
 
 {Due to the equivalence of the two systems \eqref{start} and \eqref{spp}, the error estimate for \eqref{spp} offers a theoretical foundation to understand the error behavior of \eqref{start}. 
In this paper, we perform our error estimate base on the system \eqref{spp}, while the application problems we considered are often in the form of \eqref{start}.}

The main goal in this paper is to investigate the order reduction phenomenon that appears in the InDC framework when combined with IMEX R-K methods (InDC-IMEX) when applied to SPPs. The novelty here is to provide rigorous and careful convergence analysis for the global error of InDC-IMEX method. {As far as we know, InDC methods constructed using IMEX R-K methods, have not been seriously studied in the literature when applied to SPPs. Furthermore, the error estimates of InDC-IMEX methods given in this paper for SPPs, explain theoretically, for the first time, the order reduction phenomenon that usually occurs for mildly stiff or stiff problems.}

We recall  that the InDC method \cite{christlieb2009integral, christlieb2009comments}, along the development of deferred correction (DC) \cite{CDC, SRD} and spectral deferred correction (SDC) \cite{dutt2000spectral, minion2003semi, layton2007implications, layton2008choice, layton2005implications, huang2006accelerating} methods, is an automatic technique of building up very high order numerical integrators based on lower order ones for ODEs. The procedure consists of one prediction and iterations of correction steps. The high order accuracy is accomplished by using a lower order numerical method to solve a series of error equations in each correction step. Compared with the classical DC method, the recently developed SDC and InDC methods are based on Picard integral equation and a deferred correction procedure is applied to an integral formulation of the error equation in DC methods. It has been shown that SDC and InDC outperforms DC in many problems with better stability and accuracy properties \cite{dutt2000spectral, christlieb2009integral}. The main difference between SDC and InDC is the distribution of quadrature nodes: the SDC method uses Gaussian/Lobatto/Radau points for better stability and accuracy properties, while InDC method uses uniform quadrature points to guarantee high order accuracy increase when high order R-K methods are applied in correction steps \cite{christlieb2009integral}. {The InDC method can be considered as a R-K type method by assembling the corresponding Butcher table, see \cite{christlieb2009comments} and Section~\ref{InDC_SA} later on in this paper. In fact, R-K or additive R-K methods of third order or less have been very well developed and optimized for their efficiency and effectiveness \cite{CarpenterKennedy}. The new aspect, that the InDC method offers, is a systematical way in constructing very high order method (e.g. up to 12th order \cite{idcark}) without the need to working out order conditions for both explicit and implicit methods. Moreover, the InDC method can be used to improve the operator splitting error \cite{christlieb2014high}, which is not possible by using a R-K method.}

The development of this theoretical study of IMEX InDC method is aided by the knowledge of some results obtained in the following papers: \cite{boscarino2008error} and \cite{boscarino_qiu}. In \cite{boscarino2008error}, the author studied the global error behavior of IMEX R-K methods existing in the literature, presenting convergence proofs for different types of IMEX R-K methods, and gave error bounds for such
methods when applied to the SPP system (\ref{spp}). In particular, this study revealed that IMEX R-K methods suffer from the phenomenon
of order reduction in the stiff regime, i,e. $\varepsilon \to 0$. In a similar fashion, in {\cite{boscarino_qiu}} the authors studied the global error behaviour of InDC method constructed by using implicit R-K methods when applied to the SPP system (\ref{spp}). This study gave error bounds for such methods. In particular, it revealed the phenomenon of order reduction in the stiff regime, when we increase the order of accuracy by series of correction equations.

This paper is a natural continuation of the research started in \cite{boscarino_qiu}. 
Then by combining the results presented in \cite{boscarino2008error} and \cite{boscarino_qiu}, we study the error bounds of InDC-IMEX methods for SPPs (\ref{spp}) in order to seek an understanding on the order reduction phenomenon. Notice that the main idea is to expand the error in powers of $\varepsilon$ whose coefficients are called error terms, and show convergence results for these error terms, as done in  \cite{boscarino2008error}. {In our analysis we consider $h \gg \varepsilon$, with $h$ being the time step size.} We consider some suitable assumption for our analysis: \emph{the globally stiff accuracy} (GSA) for the IMEX R-K methods (we will define it in the next), and the use of uniform quadrature nodes excluding the leftmost endpoint in the InDC framework. The first assumption guarantees that if a high order globally stiffly accurate IMEX R-K method is used to construct the InDC IMEX R-K method, the assembled matrix is invertible and by this result, the order of convergence for the leading order term in the $\varepsilon$-expansion of global error increases with high order in the correction steps. 
Furthermore, we will show by a counter example that, if the assumption of GSA for the IMEX R-K method is not satisfied, the corresponding InDC IMEX R-K method does not have the order increase as expected for the leading order term in the $\varepsilon$-expansion of the global error. {Note that the assumption of GSA provides a sufficient conditions to guarantee the invertibility of assembled matrix. In general we do not know whether GSA is also a necessary condition. To find this out requires a more detailed investigation.}

{Finally, note that the uniform distribution of nodes is important to increase accuracy when a high order R-K method is applied in the correction steps for classical problems and the use of quadrature nodes excluding the left-most endpoint leads to an important stability condition for stiff problems, i.e. the method becomes L-stable if A-stable; we refer readers to \cite{layton2005implications} and \cite{christlieb2009integral} for details. }


The paper is organized in the following way. In Section~\ref{sec:2}, we briefly present existing local and global error estimates of IMEX R-K methods for SPPs \cite{boscarino2008error}. In Section~\ref{sec:4}, we introduce InDC-IMEX methods applied to SPPs \eqref{spp}. {A reformulation of InDC methods constructed with IMEX R-K methods is obtained, with assembled matrices in double Bouchet tableau as in a classical IMEX method.} In Section~\ref{sec3}, main theoretical results are stated in the form of a theorem, which is proved via the classical error estimate of IMEX methods presented in \cite{boscarino2008error}. Numerical evidences, supporting these theoretical results, are summarized and presented in Section~\ref{sec5}. Conclusions are given in Section~\ref{sec6}.

\section{SPPs and IMEX R-K methods}
\label{sec:2}
\setcounter{equation}{0}
\setcounter{figure}{0}
\setcounter{table}{0}

In this section, we review classical concepts and results of the R-K and IMEX R-K methods when applied to SPPs (\ref{spp}).
In system (\ref{spp}) we assume that $0 < \varepsilon \ll 1$ and $f$ and $g$ are sufficiently differentiable vector-valued functions. The functions  $f$, $g$ and the initial values $y(0)$, $z(0)$ may depend smoothly on $\varepsilon$. For simplicity of notation, we suppress such dependence. We require that system (\ref{spp}) satisfies 
\beq
\label{eq: gz}
\mu(g_z(y, z)) \le -1,
\eeq
in an $\varepsilon$-independent neighbourhood of the solution, where $\mu$ denotes the logarithmic norm with respect to some inner product, see for details \cite{hairer1993solving2}. 

When $\varepsilon = 0$, the corresponding \emph{reduced} system is the differential algebraic equation (DAE) 
\begin{eqnarray}\label{reduced}
\begin{array}{l}
y' = f(y,z), \\
0 = g(y,z), 
\end{array}
\end{eqnarray}
whose initial values are \emph{consistent} if $0 = g(y(0),z(0))$. We assume that the Jacobian $g_z(y, z)$
 is invertible in a neighborhood of the solution of (\ref{reduced}). This
assumption guarantees the solvability of the second equation in (\ref{reduced}) and that the equation $g(y, z) = 0$ possesses
a locally unique solution $z = \mathcal{G}(y)$ (implicit function theorem). Then insert it  into the first equation of (\ref{reduced}), this gives
\begin{eqnarray}\label{eqy} 
y' = f(y,\mathcal{G}(y)).
\end{eqnarray}
The same assumption guarantees that system (\ref{reduced}) is 
a differential-algebraic one of index 1, (see \cite{hairer1993solving2} for more details).

From a classical result in SPPs theory, condition (\ref{eq: gz}) guarantees the existence of an $\varepsilon$-expansion, as the sum of a smooth function of the independent variable $t$ and an exponentially decaying function of the stretched variable $\tau = t/\varepsilon$ (initial layer).
The exponentially decaying function is not present if the initial values of system (\ref{spp}) (which depend on $\eps$) are on the smooth solution,  see Chap. VI.3 in \cite{hairer1993solving2} for more details. 

Thus in our analysis we seek mainly $\varepsilon$-asymptotic expansion of the exact smooth solutions of the problem (\ref{spp}) of the form
\begin{eqnarray}\label{ExpSol}
\begin{array}{c}
\displaystyle y(t) = y_0(t) + y_1(t)\eps + y_2(t)\eps^2 + \cdots, \\
\displaystyle z(t) = z_0(t) + z_1(t)\eps + z_2(t)\eps^2 + \cdots. 
\end{array}
\end{eqnarray}
{Inserting (\ref{ExpSol}) into (\ref{spp}) and collecting equal power of $\varepsilon$ we have, \cite{hairer1993solving2}:
\begin{equation}\label{g1}
g(y_0,z_0) = 0,
\end{equation}
\begin{equation}\label{g2}
y_0' = f(y_0,z_0),
\end{equation}
\beq
\label{g3}
\eps^1: \quad
\left \{
\begin{array}{l}
y'_1 = f_y (y_0, z_0)y_1+f_z (y_0, z_0)z_1, \\[2mm]
z'_0 = g_y (y_0, z_0)y_1+g_z (y_0, z_0)z_1, \\
\end{array}
\right.
\eeq
\beq
\cdots \nonumber
\eeq
\beq
\label{gnu}
\eps^{\nu}: \quad
\left \{
\begin{array}{ll}
y'_{\nu} &= f_y (y_0, z_0)y_{\nu}+f_z (y_0, z_0)z_{\nu} + \phi_{\nu} (y_0, z_0,\cdots, y_{\nu-1},z_{\nu-1}), \\[2mm]
z'_{\nu-1} &= g_y (y_0, z_0)y_{\nu}+g_z (y_0, z_0)z_{\nu} + \psi_{\nu} (y_0, z_0,\cdots, y_{\nu-1},z_{\nu-1}).\\
\end{array}
\right.
\eeq
%
Eq. (\ref{g1}) tells us that $z_0$ is algebraically related to $y_0$. Differentiating (\ref{g1}) we have 
\[
g_y(y_0,z_0)y_0' + g_z(y_0,z_0) z_0' = 0.
\]
Compatibility with (\ref{g2}) implies
\[
g_y(y_0,z_0)f(y_0,z_0) + g_z(y_0,z_0)(g_y(y_0,z_0) y_1 + g_z(y_0,z_0) z_1) = 0.
\]  
Therefore also $z_1$ is linked to $y_0$, $z_0$, and $y_1$, by an algebraic relation.
Then we have the following definition:
\begin{defn}\label{WP}
An initial condition $(y(0), z(0))$ for the system (\ref{spp}) is \emph{well-prepared} to order $\nu$ in $\varepsilon$ if the expansion 
\begin{eqnarray}\label{ExpSol0}
\begin{array}{c}
\displaystyle y(0) = y_0(0) + y_1(0)\eps + y_2(0)\eps^2 + \cdots, \\
\displaystyle z(0) = z_0(0) + z_1(0)\eps + z_2(0)\eps^2 + \cdots, 
\end{array}
\end{eqnarray}
satisfies the algebraic conditions (\ref{g1}), (\ref{g2}), (\ref{g3}) and (\ref{gnu}) up to order $\nu$ in $\varepsilon$.
\end{defn}
 Note that arbitrary initial values introduce an initial layer in the solution. One possible way to overcome this difficulty is simply to use a small time step $\Delta t = {\mathcal O}(\varepsilon)$ during the initial transient. After a short time, the initial layer is damped out, and the time step can be chosen larger than $\varepsilon$. All this is usually performed by a suitable time step control \cite{hairer1993solving2}.
Here we are interested in the behaviour of the scheme for time much larger that $\varepsilon$, after the effect of the initial layer has damped out, and this is why we assume that our initial condition is well-prepared. In practice, any initial condition chosen as the solution of the system at a given time large enough compared to $\varepsilon$ will be well prepared}.

A sequence of differential-algebraic systems arises in the study of SPPs, i.e. (\ref{g1}), (\ref{g2}), (\ref{g3}) and (\ref{gnu}), under the assumption of a smooth solutions (\ref{ExpSol}) of system (\ref{spp}) (a general and detailed investigation about the $\varepsilon$-expansion is given in \cite{hairer1993solving2}).

As we will show next, by inserting the ansatz (\ref{ExpSol}) into system (\ref{spp}) and collecting equal power of $\varepsilon$, one obtain a set of conditions where the coefficients in the expansion (\ref{ExpSol}) are the solutions of differential algebraic systems. 

We remind that in order to prove the convergence of R-K methods for problem of the form (\ref{spp}), in \cite{hairer1993solving2} and in the original paper \cite{hairer1988error}, the authors showed that most of the implicit R-K methods presented in the literature suffer from the phenomenon of the order reduction when applied to (\ref{spp}) in the stiff regime ($\varepsilon \to 0$). In \cite{boscarino2008error}, similar results of convergence are obtained for IMEX R-K methods.


Now we consider an $s$-stage IMEX R-K method with a double $tableau$ in the usual Butcher notation,
\begin{equation}\label{DBT}
\begin{array}{c|c}
\tilde{c} & \tilde{A}\\
\hline
\vspace{-0.25cm}
\\
 & \tilde{b^T} \end{array} \ \ \ \ \  
\begin{array}{c|c}
{c} & {A}\\
\hline
\vspace{-0.25cm}
\\
 & {b^T} \end{array}.
\end{equation}
where $\tilde{A} = (\tilde{a}_{ij})$ is an $s \times s$ matrix for an explicit scheme, with $\tilde{a}_{ij}=0$ for $j \geq i$ and $A = ({a}_{ij})$ is an  $s \times s$ matrix for an implicit scheme. 
For the implicit part of the methods, we use a diagonally implicit scheme for the function $g$, i.e. $a_{ij}=0$, for $j > i$, in order to guarantee simplicity and efficiency in solving the algebraic equations corresponding to the implicit part of the discretization. The vectors $\tilde{c}=(\tilde{c}_1,...,\tilde{c}_s)^T$, $\tilde{b}=(\tilde{b}_1,...,\tilde{b}_s)^T$, and $c=(c_1,...,c_s)^T$, $b=(b_1,...,b_s)^T$ complete the characterization of the scheme. The coefficients $\tilde{c}$ and $c$ are given by the usual relation
\begin{eqnarray}\label{eq:candc}
\tilde{c}_i = \sum_{j=1}^{i-1} \tilde a_{ij}, \ \ \ c_i = \sum_{j=1}^{i} a_{ij}.
\end{eqnarray}
When such IMEX scheme is applied to the SPP (\ref{spp}), we have
\begin{eqnarray}\label{eq:1-8}
\left(\begin{array}{c}
y_{n+1}\\
\varepsilon z_{n+1}
\end{array}\right)
 = \left(\begin{array}{c}
y_{n}\\
\varepsilon z_{n}
\end{array}\right)
 + h \sum_{i=1}^{s} \left(\begin{array}{c}
                          \tilde{b}_i k_{i}\\
                          b_i  \ell_{i}
                          \end{array}\right) 
\end{eqnarray}
where
\begin{eqnarray}\label{eq:1-9}
\left(\begin{array}{c}
k_{i}\\
\ell_{i}
\end{array}\right)
 = \left(\begin{array}{c}
f(Y_{i},Z_{i})\\
g(Y_{i},Z_{i})
\end{array}\right) 
\end{eqnarray} 
and the internal stages are given by
\begin{eqnarray}\label{eq:1-10}
\left(\begin{array}{c}
Y_{i}\\
\varepsilon Z_{i}
\end{array}\right)
 = \left(\begin{array}{c}
y_{n}\\
\varepsilon z_{n}
\end{array}\right)
 + h  \left(\begin{array}{c}
                         \sum_{j=1}^{i-1} \tilde{a}_{ij} k_{j}\\
                         \sum_{j=1}^{i} a_{ij}  \ell_{j}
                          \end{array}\right).
\end{eqnarray}
{The following definition will be also useful to characterize properties of an IMEX R-K method in the sequel.}
\begin{defn}
\label{GSA}
We say that an IMEX R-K scheme is \emph{globally stiffly accurate} (GSA) if $b^T = e^T_s
A$, 
and $\tilde{b}^T = e^T_s
\tilde{A}$, with $e_s = (0, . . . , 0, 1)^T$, and $c_s = \tilde{c}_s = 1$, i.e. the numerical solution is
identical to the last internal stage value of the scheme.
\end{defn}
{This definition means that the IMEX R-K scheme is a {\em stiffly accurate}, (i.e. ${a}_{si} = {b}_i, \ \ \textrm{for} \ \ i = 1,...,s$) in the implicit part \cite{hairer1993solving2} and a FSAL (\emph{First Same As Last}) R-K method  \cite{hairer1993solving1} in the explicit part, (i.e. 
 $\tilde{a}_{si} = \tilde{b}_i, \ \ \textrm{for} \ \ i = 1,...,s-1$)}.
It is known in the literature that FSAL R-K methods are a special class of $s$-stage explicit R-K
schemes. Such schemes have the advantage that they require only $s-1$ function evaluations per time step, because
the last stage of step $n$ coincides with the first step of the step $n+1$ (see \cite{hairer1993solving1} for details).
Note that IMEX R-K schemes with this property were already
introduced in \cite{boscarino2013flux, boscarino2013implicit}. Next we shall show the importance of this property 
for the convergence of the IMEX R-K method.

{In our analysis, we use $R(\infty) = \lim_{z\rightarrow -\infty} R(z)$, with $R(z)$ \emph{the stability function} of the implicit scheme, given by $R(z) = 1 + zb^{T}(I-zA)^{-1}${\bf 1}, with $b^{T} = (b_1,...,b_s)$ and  {\bf 1}$ = (1,...,1)^{T}$. From the expression of $R(z)$ we have $R(\infty)=1-b^TA^{-1}${\bf 1}. An implicit R-K method is said to be $L$-stable if it is $A$-stable and $R(\infty) = 0$. The $L$-stability property is important when one treats with stiff problem.(Chap. IV.3 in \cite{hairer1993solving2}). Furthermore, in the special case when the implicit method is stiffly accurate and the matrix $A$ is invertible, one has always $R(\infty) = 0$, and this makes an $A$-stable method $L$-stable.

Now we assume that the implicit R-K matrix $(a_{ij})$ is invertible, so that we get
\begin{eqnarray*}
h \ell_{ ni} = \varepsilon \sum_{j = 1}^{s}\omega_{ij}(Z_{nj}-z_n),
\end{eqnarray*}
where $\omega_{ij}$ are the elements of the inverse of $(a_{ij})$. Note that this assumption is important to provide the error estimates in \cite{boscarino2008error}.
Then inserting this into $z_{n+1}$, and setting $\varepsilon = 0$ we obtain
\begin{eqnarray}
  Y_{i} & = & y_n + h \sum_{j=1}^{i-1}\tilde{a}_{ij}f(Y_{j},Z_{j}) \label{eqg1} \\ 
 0 & = & g(Y_{i},Z_{i}) \label{eqg} \\
  y_{n+1} & = & y_n + h\sum_{i=1}^{s}\tilde{b}_if(Y_{i},Z_{i}) \label{eqg2} \\ 
 z_{n+1} & = & R(\infty)z_n + \sum_{i,j=1}^{s}b_i\omega_{ij}Z_{j}. \label{eqg3} 
\end{eqnarray}

In our analysis it is useful to characterize the different type of IMEX R-K methods existing in the literature we will consider in the sequel accordingly
to the structure of the implicit part of the method. Following \cite{boscarino2008error} we have
\begin{defn}\label{Def:A}
We call an IMEX R-K method of type A, if the matrix $A \in R^{s \times s}$ is invertible and $\tilde{c} \neq c$.
\end{defn} 
\begin{defn}\label{Def:CK} 
We call an IMEX R-K method of type CK, if the matrix $A \in R^{s\times s}$ can be written as
\begin{eqnarray*}
A = \left(\begin{array}{ll} 0 & 0\\
                        a  & \hat{A}\end{array}\right)
               \end{eqnarray*}         
with the submatrix $\hat{A} \in R^{(s-1)\times (s-1)}$ invertible and and $\tilde{c} = c$. In particular when $a = 0$, the IMEX R-K method is of type ARS.
\end{defn}
In \cite{boscarino2008error}, the author presented the error analysis of different types of IMEX R-K schemes when applied to SPP (\ref{spp}), some of which is summarized in Section~\ref{sec3} below.
Now briefly we review a couple of main results from \cite{boscarino2008error}, which we will use to prove the main theorem in this paper.

In order to obtain the error estimate for IMEX R-K methods, we start from the $\varepsilon$-expansion of the exact and numerical solutions of the problem (\ref{spp}), (see \cite{boscarino2008error} and \cite{hairer1993solving2} for details). Then, by assuming that the initial values are {well-prepared}, Definition \ref{WP}, an $\eps$-asymptotic expansion of smooth solutions of the system (\ref{spp}) is given by the ansatz (\ref{ExpSol}):
\beq
\label{eq: exact-esp}
y(t)=\sum_{\nu\ge 0} y_{\nu}(t) \eps^\nu, \quad
z(t)=\sum_{\nu\ge 0} z_{\nu} (t) \eps^\nu,
\eeq
and, similarly, for the numerical solutions by:
\beq
\label{eq: exp_expand}
y_n= \sum_{\nu \ge 0} y_{n, \nu} \eps^\nu, \qquad
z_n =\sum_{\nu \ge 0} z_{n, \nu} \eps^\nu,
\eeq
approximating exact solutions at $t_n$.
Then, the errors of the $y$ and $z$-component are formally considered as
\beq
\label{eq: Err_exp_expand_sol}
y(t_n) -{ y}_n= \sum_{\nu\ge0} \eps^\nu(y_{\nu}(t_n) - {y}_{n, \nu}) , \qquad
z(t_n) - {z}_n =\sum_{\nu\ge0} \eps^\nu(z_{\nu}(t_n) - {z}_{n, \nu}).
\eeq
The values $y_{\nu}(t)$, $z_{\nu}(t)$ in (\ref{eq: exact-esp}) are the coefficients of the $\eps$-expansion of the smooth solution for (\ref{spp}) and values $y_{n,0}$, $z_{n,0}$, $y_{n,1}$, $z_{n,1}$, $\cdots$, in (\ref{eq: exp_expand}) represent the numerical solutions of the IMEX R-K method applied to differential algebraic equations (DAEs) of arbitrary order $\nu$. Furthermore,
the first differences $y_0(t_n)- y_{n,0}$ and $z_0(t_n)- z_{n,0}$ in the expansion (\ref{eq: Err_exp_expand_sol}) are the global errors of IMEX R-K method
applied to the reduced system (\ref{reduced}), i.e. system of index $\nu = 1$. The other differences for $\nu>0$ in (\ref{eq: Err_exp_expand_sol}) are related
to the numerical solutions of the IMEX R-K method when applied to the DAEs of higher index.  In order to study the error (\ref{eq: Err_exp_expand_sol}), in \cite{boscarino2008error} the author investigated the differences: $y_{\nu}(t_n) - y^n_{\nu}$, $z_{\nu}(t_n) - z^n_{\nu}$, for $\nu \ge 0$. For details, see \cite{boscarino2008error}.

{Finally for the next analysis a couple of remarks are in order for GSA IMEX R-K of a given type.}
\begin{rem}
\label{rem: index1}
a)  By the Implicit Function Theorem applied to (\ref{eqg}), we have $Z_{i}  =  \mathcal{G}(Y_{i})$ for $i=1,...,s$
and the internal stages $Z_{i}$ depend on the internal stages of the explicit part of $Y_{i}$. 
Furthermore, in general, the numerical solutions $y_{n+1}$, $z_{n+1}$ do not lie on the manifold $g(y,z) =0$. 
However, if the scheme is GSA, then $y_{n+1} = Y_{ns}$, $z_{n+1} = Z_{ns}$ and therefore equation 
\begin{eqnarray}\label{eqsol}
g(y_{n+1},z_{n+1}) = 0,
\end{eqnarray}
is satisfied anyway.

b) In this case the application of (\ref{eqg1})-(\ref{eqg})-(\ref{eqg2}) and (\ref{eqsol}) to system (\ref{reduced}) is equivalent to the $s$-stage explicit R-K method (\ref{eqg1})-(\ref{eqg2})  applied to (\ref{eqy}) when $\eps = 0$.
Then by (\ref{eqsol}), we get $z_{n+1} = \mathcal{G}(y^{n+1})$ and we obtain for the $y$ and $z$ component the following estimates:
\begin{eqnarray*}
y_n - y(t_n)  =  \mathcal{O}({h^p}), \quad z_n - z(t_n)  = \mathcal{O}({h^p}),
\end{eqnarray*}
with $p$ is  the classical order of the explicit R-K method. Then the $z$-component possesses the same asymptotic error estimate as the $y$-component in the case $\varepsilon = 0$. Note  that if the scheme is not GSA, the equation (\ref{eqsol}) is not satisfied and  a loss of accuracy is observed for the variable $z$ (for details, see \cite{boscarino2008error}).

c)  The globally stiffly accurate property  for an IMEX R-K scheme implies also that $\tilde{b}_i \neq  b_i$, for $i = 1,\cdots, s$. Note that in \cite{boscarino2008error} the assumption of  $\tilde{b}_i = {b}_i$ for all $i$ for the IMEX R-K methods  represents the only remedy for preserving the order for the differential component $y$  when we consider high index DAEs, i.e. $\nu > 0$, otherwise the order drops to first one (for details, see Theorem 5.2, Theorem 6.1 and Theorem 6.2 in \cite{boscarino2008error}). 
\end{rem}

\section{InDC-IMEX R-K formulations applied to SPPs}
\label{sec:4} 
\setcounter{equation}{0}
\setcounter{figure}{0}
\setcounter{table}{0}

In this section, we consider the InDC methods constructed using IMEX schemes applied to SPPs (\ref{spp}). {First of all, we introduce the specific strategy of InDC methods. Later, we show as a InDC IMEX R-K method can be re-written as a standard IMEX R-K one. Clearly this formulation provides a systematic way to construct arbitrary-order IMEX R-K schemes without solving complicated order conditions. The formulation of InDC IMEX R-K methods as a standard IMEX R-K gives us the possibility  to estimate the error of the schemes by using some classical results of the error analysis developed in the paper \cite{boscarino2008error} for standard IMEX R-K.}
\subsection{{InDC framework}}

The time interval {$[0, T]$ is uniformly discretized into intervals $[t_n, t_{n+1}]$}, $n = 0,1,...,N-1$ such that
\begin{eqnarray*} 
0 = t_0 < t_1 < t_2 < ... < t_n < ...< t_N = T,
\end{eqnarray*}
with the step size $H$. Each interval $[t_n, t_{n+1}]$ is discretized again into $M$ uniform subintervals with quadrature nodes denoted by 
\begin{eqnarray}
\label{eq: node} 
t_n \doteq \tau_0 < \tau_1 <\cdots < \tau_M \doteq t_{n+1},
\end{eqnarray}
with $h = \frac{H}{M}$ being the size of a substep. In this paper, the interval $[t_n,  t_{n+1}]$ will be referred
to as a time step while a subinterval $[\tau_m, \tau_{m+1}]$ will be referred to as a substep. 
We assume the InDC quadrature nodes are uniform, which is a crucial assumption for high order improvement in accuracy, when consider applying general high order R-K methods in prediction and correction steps for classical ODE system, (see discussions in \cite{christlieb2009integral}). Similar conclusions hold when the high order IMEX R-K methods are applied in prediction and correction steps. {Since $H = M h$, we will use $\mathcal{O}(h^p)$ and $\mathcal{O}(H^p)$ interchangeably throughout the paper.  The use of quadrature nodes excluding the left-most endpoint (i.e. $\tau_0$) leads to an important stability condition for stiff problems, i.e. the method is L-stable 
{(for details see in \cite{layton2005implications})}. With the above considerations, in this paper, we consider the InDC methods with uniform quadrature nodes excluding the left-most endpoint.}

Let's assume we have obtained numerical solutions $\hat{y}^{(0)}_m$ and $\hat{z}^{(0)}_m$ approximating the exact solution at $\tau_m$ by using a low order numerical method for (\ref{spp}). Here superscript $(0)$ is used to denote the prediction step in the InDC method. We build continuous polynomial interpolants $\hat{y}^{(0)}(t)$ and $\hat{z}^{(0)}(t)$ interpolating these discrete values ($\hat{y}^{(0)}_m$ and $\hat{z}^{(0)}_m$ for $m=1, \cdots M$). Now we define the error functions 
\beq
\label{eq: error_function}
e^{(0)}(t) = y(t)-\hat{y}^{(0)}(t), \quad d^{(0)}(t) = z(t)-\hat{z}^{(0)}(t).
\eeq
Note that $e^{(0)}(t)$ and $d^{(0)}(t)$ are not polynomials in general.
We specify the residual function with respect to $y$ and $z$ via the following set of differential equations
\beq
\label{eq: delta_diff}
\begin{array}{l}
\delta^{(0)}(t) = f(\hat{y}^{(0)}(t),\hat{z}^{(0)}(t))-(\hat{y}^{(0)})'(t), \\[2mm]
\rho^{(0)}(t) = g(\hat{y}^{(0)}(t),\hat{z}^{(0)}(t))-(\varepsilon \hat{z}^{(0)})'(t). 
\end{array}
\eeq
{Thus, by subtracting \eqref{eq: delta_diff} from \eqref{spp}, {and recalling that we restrict our analysis to autonomous systems,} the error equations about the error functions \eqref{eq: error_function} become
\beq
\label{defint}
\begin{array}{l}
(e^{(0)})'(t) - \delta^{(0)}(t)= \Delta f^{(0)},\\[2mm] 
\varepsilon (d^{(0)})'(t)-\rho^{(0)}(t) = \Delta g^{(0)},
\end{array}
\eeq
where
\beq
\label{eq: def_deltafg}
\begin{array}{ll}
\Delta f^{(0)} &\doteq f(e^{(0)}(t)+\hat{y}^0(t),d^{(0)}(t)+\hat{z}^{(0)}(t))-f(\hat{y}^{(0)}(t),\hat{z}^{(0)}(t)),\\
\Delta g^{(0)} &\doteq g(e^{(0)}(t)+\hat{y}^0(t),d^{(0)}(t)+\hat{z}^{(0)}(t))-g(\hat{y}^{(0)}(t),\hat{z}^{(0)}(t)).
\end{array}
\eeq
{The initial conditions for these error equations are zero, i.e. $e^{(0)}(0) = 0$ and $d^{(0)}(0)=0$.}
}

Suppose that we have obtained approximate solutions $\hat{e}^{(0)}_m$ and $\hat{d}^{(0)}_m$ at $\tau_m$ by using a low order IMEX method for error equations \eqref{defint}, the numerical solution can then be improved as 
\[
\hat{y}^{(1)}_m = \hat{y}^{(0)}_m +  \hat{e}^{(0)}_m, \quad \hat{z}^{(1)}_m = \hat{z}^{(0)}_m +  \hat{d}^{(0)}_m, \quad \forall m = 0, \cdots M.
\]
Such correction procedures can be repeated in each local time step $[t_n, t_{n+1}]$.
{
How the scheme is actually implemented will be illustrated in details in the next section.}
In summary, the strategy of InDC methods is to use a simple numerical method to compute numerical solutions $\hat{y}^{(0)}(t)$ and $\hat{z}^{(0)}(t)$ as prediction, and then to solve a series of correction equations in the integral form based on equations (\ref{defint}). Each correction improves the accuracy of numerical solutions from the previous iteration.    

In our description of InDC, we let $\hat{y}^{(k)}_m$, $\hat{z}^{(k)}_m$, $\hat{e}^{(k)}_m$, $\hat{d}^{(k)}_m$ denote the numerical approximations (with hat) to the exact solutions and error functions. We use subscript $m$ to denote the location $\tau_m$ of the quadrature points and use superscript $(k)$ to denote the prediction ($k=0$) and correction loops ($k=1, \cdots$). We let $\underline{\cdot}$ denote the vector on InDC quadrature nodes $(\tau_1, \cdots \tau_M)$, for example, $\underline{y} = (y_1, \cdots, y_M)$.

\subsection{InDC-IMEX1 method}

{In this subsection, we consider InDC method constructed using implicit-explicit Euler method, applied to (\ref{spp}).}

We use uniformly distributed quadrature nodes $\tau_1,...,\tau_M$ from equation \eqref{eq: node} excluding the left-most endpoint.
\begin{enumerate}
\item 
(Prediction step) Use the first implicit-explicit Euler  method to compute 

$$\underline{\hat{y}}^{(0)} = (\hat{y}^{(0)}_1,...,\hat{y}^{(0)}_m,...,\hat{y}^{(0)}_M), \quad
\underline{\hat{z}}^{(0)} = (\hat{z}^{(0)}_1,...,\hat{z}^{(0)}_m,...,\hat{z}^{(0)}_M)
$$ 
as the approximations of the exact solution {$\underline{y} = (y_1, \cdots,y_m, \cdots,y_M)$ and $\underline{z} = (z_1, \cdots, z_m, \cdots, z_M)$} at quadrature nodes $\tau_1,...,\tau_M$. 
This gives
\beq
\label{eq: Euler-eps}
\begin{array}{l} 
\hat{y}^{(0)}_{m+1} = \hat{y}^{(0)}_m + h f(\hat{y}^{(0)}_{m},\hat{z}^{(0)}_{m}),\\[2mm]
\varepsilon \hat{z}^{(0)}_{m+1} = \varepsilon \hat{z}^{(0)}_m + h g(\hat{y}^{(0)}_{m+1},\hat{z}^{(0)}_{m+1}),
\end{array}
\eeq
for $m = 0,1,...M-1$.
\item
(Correction loop). For $k = 1,...,K$ ($K$ is the number of the correction step), 
let $\hat{y}^{(k-1)}$ and $\hat{z}^{(k-1)}$ denote the numerical solutions at the $(k-1)^{th}$ correction.
\begin{enumerate}
\item Denote the error function at the $(k-1)^{th}$ correction $e^{(k-1)}(t) = y(t) - \hat{y}^{(k-1)}(t)$, where $y(t)$
is the exact solution and $\hat{y}^{(k-1)}(t)$ is a $(M-1)^{th}$ order polynomial interpolating  $\bar{\hat{y}}^{(k-1)}$ at quadrature nodes $\tau_1,...,\tau_M$. Similarly denote $d^{(k-1)}(t) = z(t) - \hat{z}^{(k-1)}(t)$. {The initial conditions for these error functions are zero, i.e. $\hat{e}^{(k-1)}_0 = 0$ and $\hat{d}^{(k-1)}_0 = 0, \forall k$.}
Let $\delta^{(k-1)}(t)$ and $\rho^{(k-1)}(t)$, $\Delta f^{(k-1)}$ and $\Delta g^{(k-1)}$ be defined by equations \eqref{eq: delta_diff} and \eqref{eq: def_deltafg} respectively, but with the upper script $(0)$ replaced with $(k-1)$.
We compute the numerical error vector $\bar{\hat{e}}^{(k-1)} = (\hat{e}_1^{(k-1)},...,\hat{e}_M^{(k-1)})$ with $\hat{e}_m^{(k-1)}$ approximating $e^{(k-1)}(\tau_m)$ by applying the first order IMEX R-K method with Butcher table specified in \eqref{first_schemeARS} to the integral form of (\ref{defint}),
\beq
\label{errorEqs}
\begin{array}{lll}
\displaystyle \hat{e}^{(k-1)}_{m+1} &=& \hat{e}^{(k-1)}_m + h \Delta f^{(k-1)}_{m} 
+ \int_{\tau_m}^{\tau_{m+1}}\delta^{(k-1)}(s)ds,\\[2mm]
\displaystyle \varepsilon \hat{d}^{(k-1)}_{m+1} & =& \varepsilon \hat{d}^{(k-1)}_m +  h \Delta g^{(k-1)}_{m+1}
+\int_{\tau_m}^{\tau_{m+1}}\rho^{(k-1)}(s)ds,
\end{array}
\eeq
where 
\beq
\label{appint}
\begin{array}{l}
\int_{\tau_m}^{\tau_{m+1}}\delta^{(k-1)}(s)ds  =  \int_{\tau_m}^{\tau_{m+1}}f(\hat{y}^{(k-1)}(s),\hat{z}^{(k-1)}(s))ds - \hat{y}^{(k-1)}_{m+1} + \hat{y}^{(k-1)}_{m},\\[2mm]
\int_{\tau_m}^{\tau_{m+1}}\rho^{(k-1)}(s)ds = \int_{\tau_m}^{\tau_{m+1}}g(\hat{y}^{(k-1)}(s),\hat{z}^{(k-1)}(s))ds - \varepsilon \hat{z}^{(k-1)}_{m+1} + \varepsilon \hat{z}^{(k-1)}_{m}.
\end{array}
\eeq
Integral terms $\int_{\tau_m}^{\tau_{m+1}}$ on the right hand side of equations \eqref{appint} are approximated by a numerical quadrature, {in particular by an interpolary quadrature formula}.  
Especially, let $S$ be the integration matrix, its $(m, l)$ element is
\[
S^{m,l} = \frac{1}{h}\int_{\tau_m}^{\tau_{m+1}}\alpha_l(s) d s, \quad \mbox{for} \quad m=0, \cdots, M-1, \quad l=1, \cdots M,
\] 
where $\alpha_l(s)$ is the Lagrangian basis function based on the node $\tau_l$, $l=1, \cdots M$. 
{Analytical expression of $S^{m,l}$ can be given, which does not depends on $h$, 
 \[
S^{m,l} = \int_{m}^{{m+1}}\prod_{j \neq l}\frac{\theta-j}{l-j} d \theta, \quad \mbox{for} \quad m=0, \cdots, M-1, \quad l=1, \cdots M,
\] 
 }
Let 
{\beq
\label{eq: Sm}
S^m(\underline{f}) = \sum_{j = 1}^{M} S^{m,j}f(y_j,z_j),
\eeq
then 
\[
hS^m(\underline{f})-\int_{\tau_m}^{\tau_{m+1}}f(y(s), z(s))ds = \mathcal{O}(h^{M+1}),
\] }
for any smooth function $f$.  In other words, the quadrature formula given by $hS^{m}(\underline{f})$ approximates the exact integration with $(M+1)^{th}$ order of accuracy {\em locally}.
\item Update the approximate solutions 
\beq
\label{eq: update}
\underline{\hat{y}}^{(k)} = \underline{\hat{y}}^{(k-1)}+ \underline{\hat{e}}^{(k-1)}, \quad \underline{\hat{z}}^{(k)} = \underline{\hat{z}}^{(k-1)}+  \underline{\hat{d}}^{(k-1)}.
\eeq
\end{enumerate}
{Note that from equations (\ref{errorEqs}), \eqref{appint}, \eqref{eq: update} and using the notation introduced in equation \eqref{eq: Sm},} we get
\beq
\label{errorEqs2}
\begin{array}{l} 
\hat{y}^{(k)}_{m+1} = \hat{y}^{(k)}_m + h \Delta f_{m}^{(k-1)} + h S^{m}(\underline{\hat{f}}^{(k-1)}),\\[2mm]
\varepsilon \hat{z}^{(k)}_{m+1} =\varepsilon  \hat{z}^{(k)}_m + h \Delta g_{m+1}^{(k-1)}  + h S^{m}(\underline{\hat{g}}^{(k-1)}).
\end{array}
\eeq
\end{enumerate}
This completes the description of the {InDC-IMEX1} method. 

Note that in practice $k$ is chosen in such a way that there is no further improvement in order of convergence when increasing it. The precise relation between $k$ and $M$ and the order of accuracy of the method will be reported later.

\begin{rem}\label{RGSA}
{The assumption of 
 GSA guarantees, in the case $\varepsilon = 0$ (as in (\ref{errorEqs2})}), that the approximate solutions in the prediction step $k = 0$ satisfy the equation $g(\hat{y}^{(0)}_m, \hat{z}^{(0)}_m) = 0$,  and this implies $\hat{z}^{(0)}_m = \mathcal{G}(\hat{y}^{(0)}_m) $, $\forall m$ {(see Remark \ref{rem: index1})}.

Consequently in the first correction step for $k = 1$, by (\ref{eq: def_deltafg}), the second equation in system (\ref{errorEqs2}) is reduced to 
\begin{eqnarray}\label{principalDAE2}
\begin{array}{c}
\Delta g^{(0)}_{m} = 0 \quad \ \forall m = 0, \cdots, M.
\end{array}
\end{eqnarray}
and this guarantees that {$\underline{\hat{g}}^{(1)} =0$, i.e. $g(\hat{y}^{(1)}_m, \hat{z}^{(1)}_m) =  0$, $\forall m$}. By Remark \ref{rem: index1}, we get $\hat{z}^{(1)}_m = \mathcal{G}(\hat{y}^{(1)}_m), \forall m = 0, \cdots, M$. A similar argument can be given to show that $\underline{\hat{g}}^{(k)} = 0$, for $k=2, \cdots ,K$.
Hence, again by Remark \ref{rem: index1}, we have $g(\hat{y}^{(k)}_{m,0}, \hat{z}^{(k)}_{m,0}) = 0$, i.e., 
\[\hat{z}_{m,0}^{(k)} = \mathcal{G}(\hat{y}_{m,0}^{(k)}), \ \forall k=0, \cdots K, \ \forall m = 0, \cdots,  M.
\]
Therefore, the updating of $\hat{y}_{m+1}^{(k)}$ represents the correction step for a non-stiff ordinary differential equation of the form (\ref{eqy}), i.e., 
$$
\hat{y}^{(k)}_{m+1} = \hat{y}^{(k)}_m + h ({f}(\hat{y}_{m}^{(k)}) - \hat{f}(y_{m}^{(k-1)}) )  + h S^{m}(\bar{\hat{f}}^{(k-1)})
$$
where $f(\cdot) = f(\cdot, \mathcal{G}(\cdot))$.
Then by Remark \ref{rem: index1} b) and by classical results in the InDC framework for non-stiff ordinary differentia equations (Theorem 4.1 in \cite{christlieb2009integral} for details), we obtain the classical increasing order of accuracy for the $y$ and $z$-component, i.e., 
\begin{eqnarray}
\label{eq: lemma2_IMEX-E}
\begin{array}{l}
e^{(K)}_n \doteq {\hat{y}^{(K)}_n- y(t_n)} = \mathcal{O}(H^{\min(k+2, M)}), 
\quad
d^{(k)}_n \doteq {\hat{z}^{(K)}_n- z(t_n)} =\mathcal{O}(H^{\min(k+2, M)}).
\end{array}
\end{eqnarray}
with $k\le M$.
In a straightforward manner, in the next section we will generalize this result.
\end{rem}

{In the next section we show that InDC IMEX R-K methods can be rewritten as standard IMEX R-K schemes.}

\subsection{Reformulation of InDC-IMEX methods}
\label{InDC_SA}
In \cite{christlieb2009comments}, the InDC method constructed with explicit or implicit R-K methods in the prediction and correction steps has been reformulated as a high-order explicit or implicit R-K method whose Butcher tableau is explicitly constructed. 

{Similarly in this section we show as an InDC method constructed by a standard IMEX R-K method can be re-written as an IMEX R-K one. We first present the reformulation of InDC-IMEX1 methods constructed from first order classical IMEX R-K schemes as an IMEX method with the corresponding Butcher tableau for the explicit and implicit parts.} 

{We start to give a list of a classical first order IMEX R-K schemes given by a double Butcher tableau  (\ref{DBT})  corresponding to the implicit and explicit part of the method for different types, (\ref{Def:A}) and (\ref{Def:CK}). Furthermore we emphisize which of these schemes are GSA. 
\begin{itemize}
\item First order IMEX R-K method of type ARS (particular case of CK), it is GSA, \cite{ascher1997implicit}:
\begin{eqnarray}
\label{first_schemeARS}
\begin{array}{c|cc}
              0 & 0 &0 \\
              1 & 1 &0\\
              \hline
               &1 & 0 
\end{array} \qquad
\begin{array}{c|cc}
               0 &  0 & 0\\
               1 & 0 & 1\\
              \hline 
               & 0 & 1
\end{array}.
\end{eqnarray}
\item First order IMEX R-K method of type A, it is GSA:
\begin{eqnarray}
\label{first_schemeA_GSA}
\begin{array}{c|cc}
              0 & 0 &0 \\
              1 & 1 &0\\
              \hline
               &1 & 0 
\end{array} \qquad
\begin{array}{c|cc}
               1 &  1 & 0\\
               1 & 0 & 1\\
              \hline 
               & 0 & 1
\end{array}.
\end{eqnarray}
\item First order IMEX R-K method of type A, it is not GSA, \cite{pareschi2005implicit}:
\begin{eqnarray}\label{first_schemeA}
\begin{array}{c|c}
              0 & 0 \\
              \hline
               & 1
\end{array} \qquad
\begin{array}{c|c}
               1 & 1 \\
              \hline 
               & 1 
\end{array}.
\end{eqnarray}
\end{itemize}
}
Note that scheme (\ref{first_schemeARS}) is the implicit-explicit Euler method, i.e.\ Eq.~(\ref{eq: Euler-eps}), applied to problem (\ref{spp}). Scheme (\ref{first_schemeA_GSA}) is not used in the literature, because it not efficient. It is reported here only as an example of first order type A scheme which is GSA. As we  shall see, InDC IMEX methods based on GSA schemes maintain the accuracy  as $\varepsilon \to 0$, while the InDC IMEX R-K method based on a non GSA scheme (\ref{first_schemeA}) shows a degradation of accuracy.

We first present the reformulation of InDC-IMEX1 method constructed by scheme (\ref{first_schemeARS}), and prove that the corresponding InDC is an IMEX R-K method of type ARS and it is GSA. We call it  {InDC-IMEX1-GSA-ARS}. 

First we present the Butcher tableau for the {InDC-IMEX1-GSA-ARS} method with one loop of correction step as an example. 
This takes the form
\begin{align}\label{CBTab}
  \begin{array}{c|ccc}
  0 & 0 & {\bf 0}^T & {\bf 0}^T\\
       \vec{{c}} &\frac{\bf 1}M & \tilde{T} & O\\
    \vec{{c}} & {\bf 0} & \tilde{P} & \tilde{T} \\
    \hline
    & 0& {\tilde{\bf b}}_1^T & {\tilde{\bf b}}_2^T 
  \end{array}
  \quad
  \begin{array}{c|ccc}
    0 & 0 &{\bf 0}^T & {\bf 0}^T \\
    \vec{c}& {\bf 0} & T & O\\
    \vec{c} & {\bf 0} &P & T \\
    \hline
    & 0 & \vec{b}_1^T & \vec{b}_2^T 
  \end{array}  
\end{align}
where 
$
  \vec{c} = \frac1M \left[1, \cdots, M \right]^T,
$
${\bf 0}$ and ${\bf 1}$ are vectors with $0$ and $1$ entries respectively, having the same size as ${\bf c}$. 
${O}$ is a $M\times M$ matrix of zeros,  
$T$, $\tilde{T}$, $P$ and $\tilde{P}$ are $M \times M$ matrices,
with
 \begin{align*}
   \tilde{T}= \frac1M \left[
  \begin{array}{ccccc}
     0 & 0 & 0&\ldots&0\\ 
     1 & 0 & 0& \ldots&0\\
    \vdots & \vdots & \ddots & \vdots & \vdots \\
     1 & 1 & 1 &\ldots&0
  \end{array}
  \right],
  \quad
     {T}= \frac1M \left[
  \begin{array}{ccccc}
     1 & 0 & 0&\ldots&0\\ 
     1 & 1 & 0& \ldots&0\\
    \vdots & \vdots & \ddots & \vdots & \vdots \\
     1 & 1 & 1 &\ldots&1
  \end{array}
  \right],
\end{align*}
and $\tilde{P} = S - \tilde{T}, \quad P = S - T$,
where the matrix $S$ is constructed such that its entries  
{${S}_{ij}=\int_{t_0}^{t_i} \alpha_j(s)ds = \sum_{k = 0}^{i-1} S^{k,j}$} with $\alpha_j(s)$ the Lagrangian basis functions for the node $\tau_j$. The vectors $\tilde{\bf b}_1^T$, $\tilde{\bf b}_2^T$, ${\bf b}_1^T$ and ${\bf b}_2^T$ are taken so that they are the last rows of the matrices $\tilde{P}$, $\tilde{T}$, $P$ and $T$.
Furthermore, the method (\ref{CBTab}) has the same nodes $\vec{{c}}$ in implicit and explicit parts and the weights $\tilde{\bf b}_i^T \neq  {\bf b}_i^T$, for $i = 1,2$. 
As an example, we show the Butcher table for InDC-IMEX1-GSA-ARS method with $M=2$ and with one correction loop below. {Note that $S = [3/4, -1/4; 1, 0]$ for $M=2$.} 
\begin{eqnarray}
\label{eq: semi-implicit-InDC_A}
\begin{array}{c|ccccc}
              0    & 0    &0    & 0 & 0 &0 \\
              1/2 & 1/2 &0    & 0 & 0 &0\\
              1    & 1/2 &1/2 &0  & 0 &0 \\
             1/2  & 0    & 3/4&-1/4&0&0\\
              1 &0& 1/2 & 0& 1/2&0\\
              \hline
              & 0 & 1/2 & 0& 1/2&0
\end{array} \quad
\begin{array}{c|ccccc}
              0    & 0    &0    & 0 & 0 &0 \\
              1/2 & 0 &1/2    & 0 & 0 &0\\
              1    & 0 &1/2 &1/2  & 0 &0 \\
             1/2  & 0    & 1/4&-1/4&1/2&0\\
              1 &0& 1/2 & -1/2& 1/2&1/2\\
              \hline
              & 0 & 1/2 & -1/2& 1/2&1/2
\end{array}.
\end{eqnarray}


The same conclusion holds for the InDC method constructed with the first order IMEX R-K schemes (\ref{first_schemeA_GSA}). We obtain a InDC IMEX R-K scheme denoted as InDC-IMEX1-GSA-A. It can be shown, by similar argument presented for the previous scheme, that the assembled matrix $\mathbb{A}$ of the implicit part in the table Butcher tableau, for the InDC-IMEX1-GSA-A method, is invertible, and, at the end, we obtain a IMEX R-K method of type A which is GSA. 

As an example, we show below the double Butcher tableaus for the InDC-IMEX1-GSA-A method with $M=2$ and with one loop of correction only. The tableaus, obtained by an using an algebraic manipulator from  Eq.~(\ref{errorEqs2}), show that the InDC IMEX R-K scheme is of type A and is GSA.
{
\begin{eqnarray}
\label{eq: semi-implicit-InDC_AA}
\begin{array}{c|cccccccc}
              0 & 0 &0 & 0 & 0& 0 &0 & 0 & 0\\
              1/2 & 1/2 &0 & 0 & 0& 0 &0 & 0 & 0\\
              1/2 & 1/2 &0 & 0 & 0& 0 &0 & 0 & 0\\
              1 & 1/2 &0 & 1/2 & 0& 0 &0 & 0 & 0\\
              0 & 0 &0 & 0 & 0& 0 &0 & 0 & 0\\
             1/2&0&3/4&0&-1/4&0&0&0&0\\
             1/2&0&3/4&0&-1/4&0&0&0&0\\
              1 &0&  1/2 & 0& 0&0&1/2&0&0\\
              \hline
              1 &0&  1/2 & 0& 0&0&1/2&0&0\\
\end{array} \quad
\begin{array}{c|cccccccc}
              1/2 & 1/2 &0 & 0 & 0& 0 &0 & 0 & 0\\
              1/2 & 0&1/2 & 0 & 0& 0 &0 & 0 & 0\\
              1 & 0& 1/2 & 1/2 & 0& 0 &0 & 0 & 0\\
              1 & 0& 1/2 &0 & 1/2 & 0 &0 & 0 & 0\\
             1/2&0 & 1/4&0& -1/4&1/2&0&0&0\\
             1/2&0& 1/4&0&-1/4&0&1/2&0&0\\
              1 &0& 1/2 & 0& -1/2 &0&1/2&1/2&0\\
              1 &0& 1/2 & 0& -1/2 &0&1/2&0&1/2\\
              \hline
               &0& 1/2 & 0& -1/2 &0&1/2&0&1/2\\
\end{array}.
\end{eqnarray}
}

Finally, we consider an InDC IMEX R-K method constructed by using the first order IMEX scheme (\ref{first_schemeA}) which is not GSA. The InDC method is denoted by InDC-IMEX1-NGSA-A. For such an scheme, we show that the assembled matrix $\mathbb{A}$ of the implicit part in the Butcher table is non-invertible. 

Similaly as an example, we consider InDC-IMEX1-NGSA-A constructed by using $M=2$ and with one loop of correction only.}

Then, the corresponding Butcher tables can be assembled as
\begin{eqnarray}
\label{eq: semi-implicit-InDC_B}
\begin{array}{c|cccccccc}
              0 & 0 &0 & 0 & 0& 0 &0 & 0 & 0\\
              1/2 & 1/2 &0 & 0 & 0& 0 &0 & 0 & 0\\
              1/2 & 1/2 &0 & 0 & 0& 0 &0 & 0 & 0\\
              1 & 1/2 &0 & 1/2 & 0& 0 &0 & 0 & 0\\
              0 & 0 &0 & 0 & 0& 0 &0 & 0 & 0\\
             1/2&-1/2&3/4&0&-1/4&1/2&0&0&0\\
             1/2&-1/2&3/4&0&-1/4&1/2&0&0&0\\
              1 & -1/2 & 1& -1/2&0&1/2&0&1/2&0\\
              \hline
              & -1/2 & 1& -1/2&0&1/2&0&1/2&0\\
\end{array} \quad
\begin{array}{c|cccccccc}
              1/2 & 1/2 &0 & 0 & 0& 0 &0 & 0 & 0\\
              1/2 & 1/2 &0 & 0 & 0& 0 &0 & 0 & 0\\
              1 & 1/2 &0 & 1/2 & 0& 0 &0 & 0 & 0\\
              1 & 1/2 &0 & 1/2 & 0& 0 &0 & 0 & 0\\
             1/2&-1/2&3/4&0&-1/4&1/2&0&0&0\\
             1/2&-1/2&3/4&0&-1/4&1/2&0&0&0\\
              1 & -1/2 & 1& -1/2&0&1/2&0&1/2&0\\
              1 & -1/2 & 1& -1/2&0&1/2&0&1/2&0\\
              \hline
              & -1/2 & 1& -1/2&0&1/2&0&1/2&0\\
\end{array}.
\end{eqnarray}

Notice that the size of the Butcher matrices (\ref{eq: semi-implicit-InDC_AA}) and (\ref{eq: semi-implicit-InDC_B}) is $2*M*(K+1)$, in comparison with $M*(K+1)$ for the $\hat{\mathbb{A}}$ matrix for InDC-IMEX1-GSA-ARS methods, where the factor $2$ is due to the fact that we need an extra row to compute solutions at InDC quadrature points. When the method is not GSA, the last stage of IMEX is not the updated solution at quadrature nodes, hence we need one more row to represent solutions at quadrature nodes, see Row 2, 4, 6 and 8 in the implicit matrix presented in \eqref{eq: semi-implicit-InDC_B}. Besides these extra rows, the construction of the assembled Butcher tableau is in a similar way as those in eq.~\eqref{eq: semi-implicit-InDC_A} and \eqref{eq: semi-implicit-InDC_AA}. Notice that in matrix (\ref{eq: semi-implicit-InDC_B}) for row 2, 4, 6, 8, the diagonal entries are $0$ for the implicit part of the Butcher Tableau, i.e. the assembled matrix of the implicit part is not invertible.


Then for the InDC-IMEX1-GSA schemes constructed with first order IMEX schemes GSA (\ref{first_schemeARS})-(\ref{first_schemeA_GSA}), we have the following proposition regarding the invertibility of the assembled matrix of the implicit part.

\begin{prop} 
\label{prop: inver} (Invertibility of implicit assembled matrix)
{Consider the InDC method constructed with a first order IMEX R-K methods of type A or CK.
If the quadrature nodes used in the InDC method exclude the left-most point and the first order IMEX R-K method is GSA, then the InDC-IMEX1-GSA method is an IMEX R-K of type A or CK and it is GSA with the assembled matrix $\mathbb{A}$ or $\hat{\mathbb{A}}$ in the implicit part being invertible.
}
\end{prop}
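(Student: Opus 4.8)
\emph{Proof proposal.} The plan is to work directly with the assembled double Butcher tableau produced by the reformulation of Section~\ref{InDC_SA}: the general form \eqref{CBTab} (and its $K$-loop analogue) in the ARS/CK case, and the analogous block tableau read off from \eqref{errorEqs2} (exemplified by the $M=2$ tableau \eqref{eq: semi-implicit-InDC_AA}) in the type-A case. Once the block structure of the assembled implicit matrix is pinned down, all three claims -- type A or CK, GSA, and invertibility of the implicit part -- follow with little extra work.

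First I would record the block structure of the implicit part. Unrolling the implicit-Euler updates of the $z$-component over the $M$ substeps, expressed through the stage values of $g$, produces the lower-triangular block $T$ displayed in Section~\ref{InDC_SA}, whose diagonal entries all equal $1/M$. In correction loop $k$, the term $h\,\Delta g^{(k-1)}_{m+1}$ in \eqref{errorEqs2} carries $g$ evaluated at the \emph{current} stage $(\hat y^{(k)}_{m+1},\hat z^{(k)}_{m+1})$ with coefficient $h$, so summing over substeps reproduces the same diagonal block $T$ for loop $k$, while the dependence on loop $k-1$ -- coming from $-h\,g(\hat y^{(k-1)}_{m+1},\hat z^{(k-1)}_{m+1})$ together with the quadrature term $hS^{m}(\underline{\hat{g}}^{(k-1)})$ -- assembles into the strictly sub-diagonal block $P=S-T$. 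Hence the assembled implicit matrix is block lower triangular in the loop index with the invertible block $T$ ($\det T=M^{-M}\neq 0$) on the diagonal; equivalently it is lower triangular in the diagonally-implicit sense with all diagonal entries equal to $1/M$. For a type-A underlying scheme this is the whole matrix $\mathbb A$, which is then invertible; for a type-CK/ARS underlying scheme there is a single leading trivial stage (the node $0$), and deleting it yields the invertible matrix $\hat{\mathbb A}$, so $\mathbb A=\begin{pmatrix} 0 & 0 \\ a & \hat{\mathbb A}\end{pmatrix}$ (with $a=0$ in the ARS case). The hypotheses of the proposition enter in the reformulation: the GSA assumption is what prevents extra ``evaluation'' rows from appearing -- such rows, present when GSA fails, produce the duplicated, zero-diagonal rows and singular implicit matrix visible in \eqref{eq: semi-implicit-InDC_B} -- while the exclusion of the left-most quadrature node is the standing assumption under which the tableaux \eqref{CBTab}--\eqref{eq: semi-implicit-InDC_AA} were built and is what makes the scheme $L$-stable (cf.\ \cite{layton2005implications,christlieb2009integral}).

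Next I would verify GSA and the type. Since the underlying first-order IMEX scheme is GSA, each individual Euler substep has last stage equal to its own output; chaining the $M$ substeps through the $K+1$ loops therefore makes the final internal stage of the assembled scheme coincide with the InDC output $\hat y^{(K)}_M,\hat z^{(K)}_M$, and since $\tau_M=t_{n+1}$ the last assembled node equals $1$. Moreover the assembled weight rows are precisely the last rows of the diagonal and sub-diagonal blocks -- the vectors $\tilde{\bf b}_i^T,{\bf b}_i^T$ in \eqref{CBTab} -- i.e.\ the last rows of $\widetilde{\mathbb A}$ and $\mathbb A$, so $b^T=e_s^T\mathbb A$, $\tilde b^T=e_s^T\widetilde{\mathbb A}$ and $c_s=\tilde c_s=1$: the assembled InDC-IMEX1-GSA scheme is GSA. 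The type is then inherited: if the underlying scheme is of type A ($\tilde c\neq c$), $\mathbb A$ is invertible by the first step and the assembled explicit and implicit node vectors still differ, so the scheme is of type A (Definition~\ref{Def:A}); if the underlying scheme is of type CK (resp.\ ARS), the form $\mathbb A=\begin{pmatrix} 0 & 0 \\ a & \hat{\mathbb A}\end{pmatrix}$ with $\hat{\mathbb A}$ invertible and coinciding node vectors shows the scheme is of type CK (resp.\ ARS) (Definition~\ref{Def:CK}).

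The step I expect to be the main obstacle is the first one: checking carefully that $h\,\Delta g^{(k-1)}$ contributes exactly one copy of $T$ on each diagonal block and that its interaction with the integration matrix $S$ produces the sub-diagonal block $P=S-T$ without perturbing the diagonal -- i.e.\ that the assembled tableau really is \eqref{CBTab} and its higher-$K$ analogue, and similarly in the type-A case. Once that block form is established, lower-triangularity with nonzero diagonal gives invertibility, and the propagation of GSA together with the inheritance of the type are routine.
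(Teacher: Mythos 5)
Your proposal is correct and follows essentially the same route as the paper: identify the assembled implicit matrix as (block) lower triangular with the diagonal block $T$ repeated across correction loops, conclude invertibility from the nonzero diagonal entries $1/M$, and read off GSA from the weight vectors being the last rows of the assembled tableaus, treating the type A and type CK/ARS cases by the size/structure ($2M(K+1)$ versus $M(K+1)$ plus the single trivial stage). Your writeup merely fills in more explicitly (via \eqref{errorEqs2}) how the blocks $T$ and $P=S-T$ arise, a step the paper dispatches by pointing to the explicit tableau \eqref{eq: semi-implicit-InDC_A} and asserting the generalization \eqref{hatA2}.
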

\noindent
{\em Proof.} First the statement is true for the scheme with only one correction loop, as seen from the Butcher tables presented in eq.~\eqref{eq: semi-implicit-InDC_A}. Such results can be generalized to the InDC method with general $K$ correction loops, with a lower triangular matrix {$\hat{\mathbb{A}}$} of size $M(K+1)$ and 
\begin{align}\label{hatA2}
{\hat{\mathbb{A} }} = 
\left (
\begin{array}{ccccc}
     T & &&&\\
     P & T & &&\\
     { O}&P&T&&\\
     &&\cdots&&\\
     { O} & \cdots { O}& P& T
  \end{array}  
  \right),
\end{align}
whose diagonal entries are diagonal entries of $T$ and are $\frac1M$, i.e. nonzero. Hence we have the invertibility of $\hat{\mathbb{A}}$. 
Notice that the InDC method is also GSA, i.e. the last rows of the assembled matrices $\tilde{\mathbb{A}}$ and $\hat{\mathbb{A}}$ are the same as the $\tilde{\bf b}^T$ and ${\bf b}^T$ vectors.

In a similar fashion, we prove for IMEX schemes of type A, where the assembled matrix $\mathbb{A}$ is similar to (\ref{hatA2}), with size $2M(K+1).$ $\Box$

In the next section we extend this previous result for InDC IMEX R-K schemes constructed by considering arbitrary order $s$-stage IMEX R-K.

\subsection{InDC Methods constructed with IMEX R-K methods}

We describe in general how we apply $s$-stage IMEX R-K methods in correction loops in an InDC framework. 
We assume double Butcher tables for the IMEX R-K method as specified in Section \ref{sec:2}. 
For the internal stages, we introduce the integration and interpolation matrices, {which are based on the use of the same set of nodes for each interval, more precisely, in interval $[\tau_m, \tau_{m+1}]$ we assume that the nodes are located at $\tau_m + c_i h$, where $c_{i}$, $i = 1,\cdots s$, do not depend on the interval $m$}. Then we have:
{
\beq
\label{eq: s_cmi}
h S^{c_{i}, k} =  \int_{\tau_m}^{\tau_m+c_{i} h}\alpha_k(s) d s, 
\quad
P^{c_{i}, k} = \alpha_k(\tau_m+c_{i} h),
\eeq
}
{Similarly:
\beq
\label{eq: s_cmi_ex}
h S^{\tilde{c}_{i}, k} =  \int_{\tau_m}^{\tau_m+\tilde{c}_{i} h}\alpha_k(s) d s, 
\quad
P^{\tilde{c}_{i}, k} = \alpha_k(\tau_m+\tilde{c}_{i} h),
\eeq
}
$\forall m=0, \cdots, M-1, \quad \forall k=1, \cdots M, \quad \forall i = 1, \cdots s$,
where $\alpha_j(s)$ is the Lagrangian basis function based on the node $\tau_j$.
Let 
\[
S^{c_{i}}(\underline{f}) = \sum_{j = 1}^{M} S^{c_{i}, j} f(y_j,z_j), \quad P^{c_{i}}(\underline{f}) = \sum_{j = 1}^{M} P^{c_{i}, j} f(y_j,z_j),
\]
\[
S^{\tilde{c}_{i}}(\underline{f}) = \sum_{j = 1}^{M} S^{\tilde{c}_{i}, j} f(y_j,z_j), \quad P^{\tilde{c}_{i}}(\underline{f}) = \sum_{j = 1}^{M} P^{\tilde{c}_{i}, j} f(y_j,z_j).
\]
Then 
\[
hS^{c_{i}}(\underline{f})-\int_{\tau_m}^{\tau_{m}+c_i h}f(y(s), z(s))ds = \mathcal{O}(h^{M+1}),
\quad
P^{c_{i}}(\underline{f})-f(y(\tau_{m}+c_i h), z(t_{m}+c_i h)) = \mathcal{O}(h^{M}),
\]
and similarly,
\[
hS^{\tilde{c}_{mi}}(\underline{f})-\int_{\tau_m}^{\tau_{m}+\tilde{c}_i h}f(y(s), z(s))ds = \mathcal{O}(h^{M+1}),
\quad
P^{\tilde{c}_{mi}}(\underline{f})-f(y(\tau_{m}+\tilde{c}_i h), z(\tau_{m}+\tilde{c}_i h)) = \mathcal{O}(h^{M}),
\]
for any smooth function $f$. In other words, the quadrature formula given by $hS^{c_{i}}(\underline{f})$ approximates the exact integration with $(M+1)^{th}$ order accuracy locally, while the interpolation formula given by  $P^{c_{i}}(\underline{f})$ approximates the exact solution at R-K internal stages with $M^{th}$ order accuracy.

We compute the numerical errors approximating the error functions $e^{(k-1)}(\tau_m)$, $d^{(k-1)}(\tau_m)$ with an IMEX R-K method to (\ref{defint}), 
\beq
\label{princNum}
 \left(\begin{array}{c}
\displaystyle\hat{e}_{m+1}^{(k-1)}\\
\displaystyle\eps \hat{d}_{m+1}^{(k-1)}
\end{array}\right)
 = \left(\begin{array}{c}
\displaystyle \hat{e}_{m}^{(k-1)} + h\int_{0}^{1}\delta(\tau_m + \tau h)d\tau\\
\displaystyle \eps\hat{d}_{m}^{(k-1)} + h \int_{0}^{1}\rho(\tau_m + \tau h)d\tau
\end{array}\right) 
 + h  \left(\begin{array}{c}
                    \displaystyle    \sum_{i=1}^{s} \tilde{b}_i \ \Delta \hat{\mathcal{K}}^{(k-1)}_{mi}\\
                     \displaystyle    \sum_{i=1}^{s} b_i  \ \Delta \hat{\mathcal{L}}^{(k-1)}_{mi}
                          \end{array}\right), 
\eeq
with
\beqa 
\label{EvF}
\left(\begin{array}{c}
\Delta \hat{\mathcal{K}}^{({k-1})}_{mi}\\
\Delta \hat{\mathcal{L}}^{(k-1)}_{mi}
\end{array}\right)
& \doteq& \left(\begin{array}{c}
f(\hat{Y}^{(k)}_{mi} , \hat{Z}^{(k)}_{mi})-{P^{\tilde{c}_{i}}(\underline{\hat{f}}^{(k-1)})} \\
g(\hat{Y}^{(k)}_{mi}, \hat{Z}^{(k)}_{mi})-P^{c_{i}}(\underline{\hat{g}}^{(k-1)})
\end{array}\right).
\label{EvF2}
\eeqa
Here we set
\beq
\label{Newk}
\hat{Y}^{(k)}_{mi} = { P^{\tilde{c}_{i}} (\underline{\hat{y}}^{(k-1)})}+ \hat{E}^{(k-1)}_{mi}, \quad \hat{Z}^{(k)}_{mi} = P^{c_{i}} (\underline{\hat{z}}^{(k-1)}) + \hat{D}^{(k-1)}_{mi},
\eeq
with
\beq\label{inter_stag}
\left(\begin{array}{c}
\displaystyle\hat{E}_{mi}^{(k-1)}\\
\displaystyle\eps\hat{D}_{mi}^{(k-1)}
\end{array}\right)
 = \left(\begin{array}{c}
\displaystyle\hat{e}_{m}^{(k-1)} + h \int_{0}^{{\tilde{c}_{i}}}{\delta}(\tau_m + \tau h)d\tau\\
\displaystyle\eps\hat{d}_{m}^{(k-1)} + h \int_{0}^{c_{i}}{\rho}(\tau_m + \tau h)d\tau\\
\end{array}\right)
 + h \left(\begin{array}{c}
\displaystyle             \sum_{j=1}^{i}          \tilde{a}_{ij} \  \Delta \hat{\mathcal{K}}^{({k-1})}_{mj}\\
\displaystyle               \sum_{j=1}^{i}          a_{ij} \ \Delta \hat{\mathcal{L}}^{({k-1})}_{mj}
                          \end{array}\right).
\eeq
The integral terms in the system \eqref{princNum} can be approximated by quadrature rules, then we can rewrite (\ref{princNum}) and (\ref{inter_stag}) as
\beq\label{newK}
\left(\begin{array}{c}
\displaystyle\hat{y}_{m+1}^{(k)} -  h S^{m,(k-1)}_{\underline{\hat{{f}}}}\\
\displaystyle\eps\hat{z}_{m+1}^{(k)} - h S^{m,(k-1)}_{\underline{\hat{{g}}}}
\end{array}\right)
 = \left(\begin{array}{c}
\displaystyle\hat{y}_{m}^{(k)}\\
\displaystyle\eps\hat{z}_{m}^{(k)} 
\end{array}\right) 
 + h  \left(\begin{array}{c}
           \displaystyle           \sum_{i=1}^{s}  {\tilde{b}_i}  \ \Delta \hat{\mathcal{K}}^{({k-1})}_{mi}\\
            \displaystyle          \sum_{i=1}^{s}   b_i \   \Delta \hat{\mathcal{L}}^{({k-1})}_{mi}
                          \end{array}\right), 
\eeq
\beq\label{newapproach}
\left(\begin{array}{c}
\displaystyle\hat{Y}_{mi}^{(k)}- {hS^{\tilde{c}_{i},(k-1)}_{\underline{\hat{f}}}}\\
\displaystyle\eps\hat{Z}_{mi}^{(k)}-  hS^{c_{i},(k-1)}_{\underline{\hat{g}}}
\end{array}\right)
 = \left(\begin{array}{c}
\displaystyle\hat{y}_{m}^{(k)} \\
\displaystyle\eps \hat{z}_{m}^{(k)} 
\end{array}\right)
 + h   \left(\begin{array}{c}
            \displaystyle     \sum_{j=1}^{i}       \tilde{a}_{ij}   \Delta \hat{\mathcal{K}}^{({k-1})}_{mj}\\
            \displaystyle      \sum_{j=1}^{i}       a_{ij} \Delta \hat{\mathcal{L}}^{({k-1})}_{mj}
                          \end{array}\right),
\eeq
with
\beq \label{newS}
\left(\begin{array}{c}
  S^{m,(k-1)}_{\underline{\hat{f}}} = S^m(\underline{\hat{f}}^{(k-1)})\\
 \eps S^{m,(k-1)}_{\underline{\hat{g}}} =  S^m(\underline{\hat{g}}^{(k-1)})
                            \end{array}\right),\quad
\left(\begin{array}{c}
{S^{\tilde{c}_{i},(k-1)}_{\underline{\hat{f}}} = S^{\tilde{c}_{i}}(\underline{\hat{f}}^{(k-1)})}\\
 \eps S^{c_{i},(k-1)}_{\underline{\hat{g}}} =  S^{c_{i}}(\underline{\hat{g}}^{(k-1)})
                            \end{array}\right).                           
\eeq 

{The double Butcher tableaus for the InDC methods constructed with high order IMEX R-K can be assembled just as done for the InDC-IMEX1 method. They are of size related to the number of quadrature points $M$ as well as the number of stages $s$ of the IMEX R-K method. {Below we present Proposition \ref{prop: inver_ARK} as a general result regarding the invertibility of the implicit part of assembled Butcher tableau. We choose not to present the assembled Butcher table for the InDC method constructed with high order IMEX R-K to save space. For example, to construct a Butcher table of InDC IMEX2 method with four quadrature points and one correction loop, the assembled matrix will be at least of size $17\times 17$. For the construction of classical InDC methods using high order R-K methods, the reader can consult the reference \cite{christlieb2009comments}.}

\begin{prop} 
\label{prop: inver_ARK}
(Invertibility of the implicit assembled matrices ${\mathbb{A}}$ $\hat{\mathbb{A}}$) Consider the InDC method constructed with IMEX R-K methods of type A or CK with invertible matrix $A$ or $\hat{A}$ in implicit part of Butcher tableau, respectively. If the quadrature nodes used in the InDC method exclude the left-most point and the IMEX R-K method is GSA, then the InDC method is a IMEX R-K method of type A or CK and it is GSA with the assembled matrix ${\mathbb{A}}$ or $\hat{\mathbb{A}}$ in the implicit part being invertible. 
\end{prop}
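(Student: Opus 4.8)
\noindent\emph{Proof sketch.} The plan is to repeat the argument of Proposition~\ref{prop: inver}, now replacing the scalar substep coefficients of the implicit Euler step by the $s\times s$ block $\frac1M A$ (resp.\ $\frac1M\hat A$) of the base IMEX R-K method, so that the whole statement reduces to exhibiting a lower triangular structure for the assembled implicit matrix. I would order the stages of the assembled double Butcher tableau hierarchically: first by loop (the prediction loop, then the corrections $1,\dots,K$), then within each loop by substep $m=0,\dots,M-1$, and finally within each substep by the native stage index $i=1,\dots,s$ of the base scheme. With this ordering I claim the assembled matrix is lower triangular at all three levels simultaneously, its diagonal entries being those of $\frac1M A$ repeated; invertibility, the type, and the GSA property then follow almost immediately.

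First I would establish the triangular structure, reading off equations \eqref{newK}, \eqref{newapproach}, \eqref{Newk}, \eqref{inter_stag} and \eqref{EvF}. At the loop level: the only way correction loop $k$ sees anything outside itself is through the interpolation and integration operators $P^{\tilde c_i},P^{c_i},S^{\tilde c_i},S^{c_i},S^m$ applied to the loop-$(k-1)$ nodal data $\underline{\hat f}^{(k-1)},\underline{\hat g}^{(k-1)},\underline{\hat y}^{(k-1)},\underline{\hat z}^{(k-1)}$; those stages precede loop $k$, so these terms sit in strictly lower block positions. At the substep level: within a fixed loop, substep $m+1$ starts from $\hat y^{(k)}_{m+1}=\hat Y^{(k)}_{ms}$, the last internal stage of substep $m$ — this equality holds because the base scheme is GSA (Remark~\ref{rem: index1}) and because $\tilde c_s=c_s=1$ makes the quadrature contribution $S^m$ coincide with the stage quadrature at $\tilde c_s$; again strictly lower. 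At the stage level: stage $i$ of a given substep depends only on stages $j\le i$ through $a_{ij}$, i.e.\ lower triangular, since the implicit part is diagonally implicit. Putting the three levels together, $\mathbb{A}$ (resp.\ $\hat{\mathbb{A}}$) is lower triangular with diagonal entries equal to those of $\frac1M A$ (resp.\ $\frac1M\hat A$), exactly as in the $s=1$ case \eqref{hatA2}, where the diagonal block $T$ degenerates and the diagonal becomes the constant $1/M$.

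The three assertions then follow. \emph{Invertibility:} for type A, $A$ is an invertible diagonally implicit matrix, hence has nonzero diagonal, so the lower triangular matrix $\mathbb{A}$ is invertible; for type CK (and its ARS subcase), the vanishing first row of $A$ propagates to the first stage of every substep of every loop, giving $\mathbb{A}$ the block form of Definition~\ref{Def:CK} whose trailing square block $\hat{\mathbb{A}}$ is lower triangular with diagonal inherited from the nonzero diagonal of $\hat A$, hence invertible. \emph{Type:} the node relation $\tilde c\neq c$ for type A, or $\tilde c=c$ for type CK, is inherited by the assembled node vectors $\vec c$, so the InDC method is of the same type. \emph{GSA:} since the base scheme is GSA, the last internal stage of the last substep of correction loop $K$ equals the updated numerical solution through \eqref{eq: update}; the last assembled node equals $\tau_M=t_{n+1}$, hence $1$; and, as displayed for $K=1$ in \eqref{CBTab} and extended by a straightforward induction on $K$, the assembled weight vectors $\tilde{\bf b}^T$ and ${\bf b}^T$ are exactly the last rows of $\tilde{\mathbb{A}}$ and $\mathbb{A}$. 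Therefore the InDC-IMEX method is GSA of type A (resp.\ CK).

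The step I expect to be the main obstacle is the bookkeeping in the second paragraph: one has to verify that every occurrence of the operators $P^{\tilde c_i},P^{c_i},S^{c_i},S^{\tilde c_i},S^m$ lands strictly below the diagonal — in particular at the interface where $P^{\tilde c_i}(\underline{\hat y}^{(k-1)})$ enters the internal stages via \eqref{Newk} — and, in the type CK case, that the pattern of stages carrying a vanishing implicit row is precisely the one required by Definition~\ref{Def:CK} after chaining $M$ substeps over $K+1$ loops. Once the lower triangular structure is confirmed, invertibility, type and GSA are immediate; all the content is structural rather than computational, which is why the explicit high-order assembled tableaux are omitted.
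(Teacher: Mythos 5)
Your overall route is the same as the paper's: order the assembled stages loop-by-loop and substep-by-substep, observe that the implicit assembled matrix is (block) lower triangular with diagonal (blocks) given by $\tfrac1M A$ (resp.\ $\tfrac1M\hat A$) because inter-loop coupling only occurs through the quadrature/interpolation operators acting on the previous loop's stages, deduce invertibility from the invertibility of $A$ (resp.\ $\hat A$), and read off GSA from the fact that the assembled weights are the last rows of $\tilde{\mathbb{A}}$ and $\hat{\mathbb{A}}$. The paper phrases the invertibility step via determinants of block triangular matrices ($\det(T)=(\tfrac1M\det\hat A)^M$, $\det(\hat{\mathbb{A}})=\det(T)^{K+1}$), while you argue entrywise using the diagonally implicit structure; that is a cosmetic difference.

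There is, however, a genuine inconsistency in your treatment of the CK/ARS case, which is exactly where the content of the proposition lies. You assert that ``the vanishing first row of $A$ propagates to the first stage of every substep of every loop'' \emph{and} that the trailing block $\hat{\mathbb{A}}$ is lower triangular with the nonzero diagonal of $\hat A$. These two statements cannot both hold: if a zero implicit row (hence a zero diagonal entry) recurred at the first stage of every substep, then deleting only the first row and column of $\mathbb{A}$ would leave a lower triangular matrix with zero diagonal entries, which is singular --- this is precisely the failure mode of the non-GSA assembly \eqref{eq: semi-implicit-InDC_B}. The correct mechanism, visible in \eqref{eq: semi-implicit-InDC_A} and implicit in the paper's stage count $\sigma=sM(K+1)$ with $s$ the size of $\hat A$, is that because $\tilde c_1=c_1=0$ the first stage of each substep equals the starting value $(\hat y^{(k)}_m,\hat z^{(k)}_m)$, and by GSA this in turn equals the last internal stage of the preceding substep; hence that stage is \emph{identified} with an existing row rather than appended as a new zero-diagonal row. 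Only the very first row of the assembled implicit matrix vanishes, and the trailing block is block lower triangular with diagonal blocks $\tfrac1M\hat A$, hence invertible. You already have the needed ingredient ($\hat y^{(k)}_{m+1}=\hat Y^{(k)}_{ms}$ by GSA); it must be used to collapse the redundant first stages, not to coexist with rows that would destroy invertibility.
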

\noindent
\begin{proof} {The proof is similar to that for Proposition~\ref{prop: inver}. We first consider the case of type ARS, which is a special case of type CK. We let $\hat{\mathbb{A}}$ with size $\sigma$ be the invertible sub-matrix in the implicit part of the InDC Butcher table and $\hat{A}$ with size $s$ be the invertible matrix in the implicit part of IMEX method used to construct the InDC-IMEX method. Then $\sigma = s*M*(K+1)$, where $M$ is the number of quadrature points and $K$ is the number of correction loops. For example, for the InDC method constructed with IMEX2 method \eqref{SesondARS} with $M=2$ and $K=1$, $s=2$, then $\sigma = 8$. $\hat{\mathbb{A}}$ can be constructed with a similar structure as shown in eq.~\eqref{hatA2}, where $T$ is of size $s*M$ and it's block triangular. 
Its diagonal blocks ($M$ of them) are matricies $\hat{A}$ {of the IMEX R-K scheme} (of size $s$) scaled by the size of subinterval $1/M$. Recall from classical linear algebra that, the determinant of a block triangular matrix is the product of the determinants of diagonal blocks. Therefore, 
\[
det(T) = (\frac1M det({\hat{A}}))^M \neq0,
\]
due to the invertibility of the matrix $A$ of the IMEX R-K method used to construct the InDC IMEX one. Hence, $det(\hat{\mathbb{A}}) = det(T)^{K+1} \neq0$, i.e. $\hat{\mathbb{A}}$ is invertible.
%
Finally, $\tilde{\bf b}^T$ and ${\bf b}^T$ vectors come from the last row of Butcher tableaus $\tilde{\mathbb{A}}$ and $\hat{\mathbb{A}}$, therefore the InDC method is GSA too. 
Similar results can be obtained for the type A and type CK.
}
\end{proof}
\begin{rem} \label{InDC_SA2}
Note that the assumptions in Proposition~\ref{prop: inver} and \ref{prop: inver_ARK} provide a sufficient condition to guarantee the invertibility of implicit assembled matrix $\mathbb{A}$ or $\hat{\mathbb{A}}$. In particular, it has been pointed out in \cite{boscarino2008error} that the {invertibility of matrix $A$ or  submatrix $\hat{A}$ of the IMEX R-K method}, is an essential hypothesis for the error analysis for IMEX R-K methods {of different types}, otherwise error estimates no longer holds. 
\end{rem}

{In the following proposition, by Proposition~\ref{prop: inver_ARK} and Remark~\ref{RGSA}, we can generalize the estimates (\ref{eq: lemma2_IMEX-E}) for the corresponding InDC method constructed with GSA IMEX R-K type A or type CK method, in the case of $\varepsilon = 0$, (reduced problem).}

\begin{prop}
\label{lemma1}
Consider that the reduced system (\ref{reduced}) with $\varepsilon=0$ satisfies {\eqref{eq: gz}} and with consistent initial values. {Consider the InDC IMEX R-K method constructed by GSA IMEX R-K methods of type CK or A.}
Then  the global error after $K$ correction loops satisfies the following estimates
\begin{eqnarray}
\label{eq: lemma2_local}
\begin{array}{l}
e^{(K)}_n \doteq {\hat{y}^{(K)}_n- y(t_n)} = \mathcal{O}(H^{\min(s_{K}, M)}) 
\quad
d^{(k)}_n \doteq {\hat{z}^{(K)}_n- z(t_n)} =\mathcal{O}(H^{\min(s_{K}, M)}),
\end{array}
\end{eqnarray} 
{with $\hat{y}^{(K)}_n$ and $\hat{z}^{(K)}_n$ being the numerical solution of the InDC methods at time  $t_n$ after $K$ corrections, $s_{K} = \sum_{k=0}^{K} p^{(k)}$, and $H = Mh$ is one InDC time step. The estimates hold uniformly for $H\le H_0$ and $nH \le Const$.}
\end{prop}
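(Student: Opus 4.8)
The plan is to reduce Proposition~\ref{lemma1} to the classical InDC convergence theory for non-stiff ODEs, after disposing of the algebraic variable. First I would invoke Remark~\ref{RGSA}: since the IMEX R-K method used in prediction and all correction loops is GSA and the quadrature nodes exclude the left-most point, the argument there (built on Remark~\ref{rem: index1} and Proposition~\ref{prop: inver_ARK}) shows inductively in $k$ that, with $\varepsilon=0$, the numerical solution at every quadrature node and at every internal stage lies on the constraint manifold, i.e.\ $g(\hat{y}^{(k)}_m,\hat{z}^{(k)}_m)=0$ and hence $\hat{z}^{(k)}_m=\mathcal{G}(\hat{y}^{(k)}_m)$ for all $m$ and all $k=0,\dots,K$, and likewise for the internal stages through \eqref{Newk}--\eqref{inter_stag}. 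The base case $k=0$ is exactly the GSA prediction Euler/IMEX step applied to \eqref{reduced}; the inductive step uses that $\Delta g^{(k-1)}_m=0$ once the previous iterate is on the manifold, together with the GSA property forcing the updated solution to coincide with the last stage. This collapses the $z$-dynamics entirely: the $z$-error is controlled by the $y$-error through the smooth map $\mathcal{G}$, so it suffices to prove the estimate for $e^{(K)}_n$.

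Next I would observe that, on the manifold, the $y$-update \eqref{newK}--\eqref{newapproach} with $\varepsilon=0$ is precisely the InDC iteration built from the \emph{explicit} tableau $(\tilde{A},\tilde{b},\tilde{c})$ of the IMEX pair applied to the genuinely non-stiff ODE $y'=f(y,\mathcal{G}(y))$ of \eqref{eqy} — the implicit stages have been solved out by the implicit function theorem and only feed back through $\mathcal{G}$. By Remark~\ref{rem: index1}(b), one InDC step of this reduced scheme is the explicit R-K method of its classical order $p^{(k)}$ applied to \eqref{eqy} in each correction $k$. At this point the result is a direct application of the classical InDC accuracy theorem for non-stiff ODEs (Theorem~4.1 in \cite{christlieb2009integral}, see also \cite{christlieb2009comments}): with uniform quadrature nodes, one prediction of order $p^{(0)}$ and correction loops using R-K methods of orders $p^{(1)},\dots,p^{(K)}$, the global error improves to order $\min(s_K,M)$ with $s_K=\sum_{k=0}^{K}p^{(k)}$, the cap at $M$ coming from the accuracy $\mathcal{O}(h^{M+1})$ of the $M$-node interpolatory quadrature $S^m$ (and $\mathcal{O}(h^M)$ of the interpolation operators $P^{c_i}$, $P^{\tilde c_i}$) appearing in \eqref{newS}. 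Translating $h=H/M$ and using $nH\le\mathrm{Const}$ gives \eqref{eq: lemma2_local} for the $y$-component uniformly for $H\le H_0$; composing with $\mathcal{G}\in C^\infty$ near the solution then yields the same bound for $d^{(K)}_n$.

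The step I expect to be the main obstacle is the careful verification that the InDC-IMEX correction iteration, when restricted to $\varepsilon=0$, really does coincide — stage by stage, node by node — with the classical InDC iteration driven by the explicit tableau applied to \eqref{eqy}. One has to check that the residual/quadrature terms $S^{m}(\underline{\hat f}^{(k-1)})$, $S^{\tilde c_i}(\underline{\hat f}^{(k-1)})$ and the interpolations $P^{\tilde c_i}(\underline{\hat y}^{(k-1)})$ in \eqref{EvF2}--\eqref{newS} are exactly those of the non-stiff InDC scheme once $\hat z^{(k-1)}_m=\mathcal{G}(\hat y^{(k-1)}_m)$, so that $f(\hat Y,\hat Z)=f(\hat Y,\mathcal{G}(\hat Y))$ and the $g$-equations are identically satisfied and contribute nothing. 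Equivalently, one must confirm that the assembled InDC-IMEX Butcher tableau, after setting $\varepsilon=0$ and eliminating the implicit stages via invertibility of $\hat{\mathbb{A}}$ (guaranteed by Proposition~\ref{prop: inver_ARK}), reduces to the assembled InDC tableau of the explicit part acting on \eqref{eqy}; this is the point where the GSA hypothesis (removing the extra "solution-at-quadrature-node" rows, cf.\ the discussion around \eqref{eq: semi-implicit-InDC_B}) is genuinely used. Once that identification is in place, the error bound is inherited verbatim from \cite{christlieb2009integral} and no new estimates are needed; the uniformity in $H\le H_0$ is standard since all constants depend only on bounds for $f$, $\mathcal{G}$ and their derivatives in an $\varepsilon$-independent neighbourhood of the solution.
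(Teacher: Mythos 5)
Your proposal is correct and follows essentially the same route as the paper: the paper's (very terse) proof also combines Proposition~\ref{prop: inver_ARK} with Remark~\ref{rem: index1}(b) — i.e.\ GSA plus invertibility of the assembled implicit matrix keeps the $\varepsilon=0$ iterates on the manifold $g=0$, reducing the scheme to the explicit-tableau InDC method for $y'=f(y,\mathcal{G}(y))$, after which the classical InDC accuracy result (Theorem~4.1 of \cite{christlieb2009integral}, as invoked in Remark~\ref{RGSA}) gives the $\mathcal{O}(H^{\min(s_K,M)})$ bound for both components. Your write-up simply makes explicit the induction over correction loops and the stage-by-stage identification that the paper leaves implicit.
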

\begin{proof}
{From Proposition~\ref{prop: inver_ARK} and Remark \ref{rem: index1}  b), 
we get the estimates (\ref{eq: lemma2_local}).} 
\end{proof} 

Note that  if the IMEX R-K method is not GSA, the assembled matrix for the InDC-IMEX R-K method is not invertible and by Remark~\ref{rem: index1} and \ref{RGSA}, the error estimates (\ref{eq: lemma2_local}) in general are not satisfied and an order reduction for the case reduced problem (\ref{reduced}) ($\varepsilon = 0$) is observed.

\section{Error estimates of InDC methods constructed with IMEX R-K}
\label{sec3}
\setcounter{equation}{0}
\setcounter{figure}{0}
\setcounter{table}{0}

In this section, we present the main theoretical result in the form of a theorem. On the contrary to what has been done in \cite{boscarino_qiu},
our idea here is to extend the error analysis of IMEX R-K methods applied to SPPs obtained in \cite{boscarino2008error}, to InDC methods constructed by IMEX R-K methods. In particular,  we  use the global error estimate results of IMEX methods for SPPs from \cite{boscarino2008error} to InDC-IMEX methods by the fact that an InDC-IMEX method can be viewed as an IMEX R-K method with assembled Butcher tableaus, as discussed 
in the previous section, to obtain global error estimates for InDC-IMEX methods.

Then, as in \cite{boscarino2008error}, in the main thorem, we estimate the {\em global} errors of the InDC-IMEX method
\[
e^{(K)}_{n, \nu} \doteq \hat{y}^{(K)}_{\nu}(t_n) - y_{\nu}(t_n), \quad  d^{(K)}_{n, \nu} \doteq \hat{z}^{(K)}_{\nu}(t^n) - z_{\nu}(t_n), \quad \nu = 0, 1
\]
where $\hat{y}^{(K)}_{\nu}(t_n)$ and $\hat{z}^{(K)}_{\nu}(t_n)$  are the $\nu$-th term in the $\eps$-expansion of the numerical solution of the InDC-IMEX method with the $K$ correction steps at some final time $t^n$.
Note that the estimates for the case $\nu = 0$ has been provided in the Proposition \ref{lemma1}.

\begin{thm} 
\label{thm: IDC_IMEX_R-K}
Consider the stiff system \eqref{spp}, \eqref{eq: gz} with  {well-prepared} initial values $y(0)$, $z(0)$ admitting a smooth solution. 
Consider the InDC method constructed with $M$ uniformly distributed quadrature nodes excluding the left-most point and a globally stiffly accurate IMEX R-K method of order $p^{(0)}$ {of type A or CK}. 
 Apply IMEX R-K methods of different classical orders $(p^{(1)}, p^{(2)}, \ldots,  p^{(K)})$ in the correction loops, $k=1, \cdots K$. Assume that each of these IMEX R-K methods in the correction loops are globally stiffly accurate. Then the global error after $K$ correction loops satisfies the following estimates
\beq
\begin{array}{lll}\label{final_estimate}
e^{(K)}_n \doteq \hat{y}^{(K)}(t_n) - y(t_n) &=& \mathcal{O}(H^{\min(s_{K}, M)})+\mathcal{O}(\eps H),\\
d^{(K)}_n \doteq \hat{z}^{(K)}(t_n) - z(t_n) &=& \mathcal{O}(H^{\min(s_K, {M})})+\mathcal{O}(\eps H),
\end{array}
\eeq
{where $\hat{y}^{(K)}(t_n)$ and $\hat{z}^{(K)}(t_n)$ are the numerical solutions of the InDC methods at $t^n$,}
for $\eps \le cH$ and for any fixed constant $c>0$, $s_{K} = \sum_{k=0}^{K} p^{(k)}$, and $H = Mh$ is one InDC time step. The estimates hold uniformly for $H\le H_0$ and $nH \le Const$. 
\end{thm}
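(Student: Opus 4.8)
The plan is to follow the $\varepsilon$-expansion strategy of \cite{boscarino2008error}: insert the ansatz \eqref{eq: exp_expand} for the numerical solution of the assembled InDC-IMEX R-K method into the scheme \eqref{newK}--\eqref{newapproach}, match equal powers of $\varepsilon$, and show that the coefficient errors $e^{(K)}_{n,\nu}$, $d^{(K)}_{n,\nu}$ are the global errors of the assembled InDC-IMEX method applied to the sequence of DAEs \eqref{g1}--\eqref{gnu}. Since the InDC-IMEX method is, by Proposition~\ref{prop: inver_ARK}, a genuine IMEX R-K method of type A or CK which is GSA and has invertible implicit (sub)matrix, the hypotheses of the error analysis in \cite{boscarino2008error} are met verbatim, so I may quote those results for the assembled scheme rather than re-derive them.

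First I would handle the leading term $\nu=0$: this is exactly the reduced problem \eqref{reduced}, and Proposition~\ref{lemma1} already gives $e^{(K)}_{n,0}=\mathcal{O}(H^{\min(s_K,M)})$ and $d^{(K)}_{n,0}=\mathcal{O}(H^{\min(s_K,M)})$ — here the GSA property is what transfers the order of the $y$-component to the $z$-component (Remark~\ref{rem: index1}(b)) and the uniform-node InDC accuracy increment $s_K=\sum_{k=0}^K p^{(k)}$ (capped at $M$) comes from Theorem~4.1 of \cite{christlieb2009integral} applied through the assembled tableau. Next I would treat $\nu=1$: collecting the $\varepsilon^1$ terms shows that $(e^{(K)}_{n,1},d^{(K)}_{n,1})$ is the global error of the assembled InDC-IMEX method applied to the index-$2$ linear DAE \eqref{g3}. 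Invoking the corresponding convergence result of \cite{boscarino2008error} for GSA IMEX R-K methods on index-$2$ systems (Theorems~5.2/6.1/6.2 there, specialized to the GSA case), one gets $e^{(K)}_{n,1}=\mathcal{O}(H)$ and $d^{(K)}_{n,1}=\mathcal{O}(H)$, where the presence of $\tilde b_i\neq b_i$ (forced by GSA, Remark~\ref{rem: index1}(c)) is precisely what limits this to first order — this is the source of the order-reduction term. Because the final estimate \eqref{final_estimate} only expands to first order in $\varepsilon$, the remaining terms $\nu\ge 2$ are absorbed into $\mathcal{O}(\varepsilon^2)\subset\mathcal{O}(\varepsilon H)$ for $\varepsilon\le cH$, so they need only a crude boundedness argument (uniform bounds on the higher coefficient-errors over $nH\le \text{Const}$), not a sharp order count.

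Finally I would assemble the pieces: $y(t_n)-\hat y^{(K)}(t_n)=\sum_{\nu\ge0}\varepsilon^\nu(y_\nu(t_n)-\hat y^{(K)}_{n,\nu}) = \mathcal{O}(H^{\min(s_K,M)}) + \varepsilon\,\mathcal{O}(H) + \mathcal{O}(\varepsilon^2)$, and using $\varepsilon\le cH$ to fold the last term into $\mathcal{O}(\varepsilon H)$ yields \eqref{final_estimate}; the same bookkeeping applies to the $z$-component. The uniformity "for $H\le H_0$ and $nH\le\text{Const}$" is inherited from the corresponding uniformity statements in \cite{boscarino2008error,christlieb2009integral}, together with Gr\"onwall-type accumulation of local errors over the $\mathcal{O}(1/H)$ steps, which is standard once the one-step estimates are in hand.

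\textbf{Main obstacle.} The delicate point is justifying that the $\varepsilon$-expansion of the \emph{InDC} numerical solution is well-defined and that its coefficients genuinely solve the assembled scheme applied to the DAE hierarchy — i.e.\ that the interpolation/quadrature operators $P^{c_i}$, $S^{c_i}$ and the correction-update structure \eqref{eq: update} commute with the formal $\varepsilon$-expansion and do not destroy the structure required by \cite{boscarino2008error} (in particular that the assembled implicit matrix's invertibility, Proposition~\ref{prop: inver_ARK}, is exactly the hypothesis under which $h\ell_{ni}=\varepsilon\sum_j\omega_{ij}(Z_{nj}-z_n)$ can be solved and the $\varepsilon=0$ limit \eqref{eqg1}--\eqref{eqg3} is consistent). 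Once that reduction is clean, the rest is a direct citation of the IMEX R-K error bounds plus elementary power-counting in $\varepsilon$.
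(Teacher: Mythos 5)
Your proposal is correct and takes essentially the same route as the paper: the paper's proof also views the InDC-IMEX scheme as an assembled GSA IMEX R-K method of type A or CK via Proposition~\ref{prop: inver_ARK}, and then combines Proposition~\ref{lemma1} (the $\varepsilon^0$ term of order $\min(s_K,M)$) with the error bounds of \cite{boscarino2008error}, packaged as Proposition~\ref{classP}, whose content is precisely the $\varepsilon$-expansion bookkeeping ($\nu=0$, $\nu=1$, $\nu\ge 2$) you spell out. The only difference is presentational: the paper cites Proposition~\ref{classP} directly instead of unfolding the expansion, and your worry about the expansion commuting with the quadrature/interpolation operators is resolved exactly as you suspect, by applying the cited analysis to the assembled Butcher tableau.
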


Now we give the following proposition as a consequence of the Lemma 5.1 and theorem 5.2, and 6.2 in \cite{boscarino2008error}. The main Theorem \ref{thm: IDC_IMEX_R-K} follows from this result. 
\begin{prop}
\label{classP}
Consider a GSA IMEX R-K method of type A or CK, with $\tilde{b}_i \neq  b_i$, for $i = 1, \dots, s$, and let $p$ be the order of the explicit part of the scheme. Apply this method to the general problem (\ref{spp}), under the hypothesis (\ref{eq: gz}) with initial values consistent and such that the problem (\ref{spp}) admits a smooth solution. Then, for any fixed constant $C$, the global error satisfies for $\eps <Ch$
\begin{eqnarray}\label{1_0est}
{\hat{y}_{n}} - y(t_n) = \mathcal{O}(h^p) + \mathcal{O}(\eps h), \quad {\hat{z}_{n}} - z(t_n) = \mathcal{O}(h^p) + \mathcal{O}(\eps h),
\end{eqnarray}
These estimates hold uniformly for $h\le h_0$ and $nh<C$  for any fixed constant $C > 0$.
\end{prop}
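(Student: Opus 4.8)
\noindent\emph{Proof strategy.}
The plan is to follow the $\eps$-expansion strategy of \cite{boscarino2008error}: expand both the exact smooth solution and the numerical solution in powers of $\eps$ as in \eqref{eq: exact-esp}--\eqref{eq: exp_expand}, truncate after the $\eps^1$ term with a uniformly bounded remainder, and estimate the two leading coefficients of the error \eqref{eq: Err_exp_expand_sol} separately:
\[
\hat{y}_n - y(t_n) = \big(\hat{y}_{n,0}-y_0(t_n)\big) + \eps\big(\hat{y}_{n,1}-y_1(t_n)\big) + \eps^2\,\mathcal{R}_y(t_n,\eps),
\]
and likewise for $z$. The existence of the expansion of the exact solution under \eqref{eq: gz} with consistent initial data is classical (see \cite{hairer1993solving2}); the analogous expansion of the numerical solution, with remainders $\mathcal{R}_y,\mathcal{R}_z$ bounded uniformly for $h\le h_0$ and $nh\le C$, is established in \cite{boscarino2008error} and uses the invertibility of $A$ for type~A, resp.\ of $\hat{A}$ for type~CK. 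Since $\eps<Ch$, the remainder term is $\mathcal{O}(\eps^2)=\mathcal{O}(\eps h)$, so it remains only to bound the two leading coefficients.

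For the leading coefficient, $\hat{y}_{n,0},\hat{z}_{n,0}$ coincide with the numerical solution of the given IMEX R-K scheme applied to the reduced (index one) DAE \eqref{reduced}. Because the scheme is GSA, Remark~\ref{rem: index1} b) applies: the numerical solution stays on the manifold $g=0$, i.e.\ $\hat{z}_{n,0}=\mathcal{G}(\hat{y}_{n,0})$ through \eqref{eqsol}, and the $y$-update reduces to the explicit part of the R-K tableau applied to the ODE \eqref{eqy}; hence both components inherit the classical order $p$ of the explicit method, giving $\hat{y}_{n,0}-y_0(t_n)=\mathcal{O}(h^p)$ and $\hat{z}_{n,0}-z_0(t_n)=\mathcal{O}(h^p)$. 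This is the $K=0$ instance of Proposition~\ref{lemma1}, i.e.\ Lemma~5.1 of \cite{boscarino2008error}.

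For the $\eps^1$ coefficient, $\hat{y}_{n,1},\hat{z}_{n,1}$ are the numerical solution of the same scheme applied to the linear higher-index (index two) DAE \eqref{g3}. Here one invokes Theorem~5.2 of \cite{boscarino2008error} for a GSA method of type~A and Theorem~6.2 for type~CK: since GSA forces $\tilde{b}_i\neq b_i$ (Remark~\ref{rem: index1} c)), the order of both the differential and the algebraic component for this higher-index problem drops to one, so $\hat{y}_{n,1}-y_1(t_n)=\mathcal{O}(h)$ and $\hat{z}_{n,1}-z_1(t_n)=\mathcal{O}(h)$, uniformly for $nh\le C$. Substituting the three estimates into the truncated error and using $\eps<Ch$ yields $\hat{y}_n-y(t_n)=\mathcal{O}(h^p)+\eps\,\mathcal{O}(h)+\mathcal{O}(\eps h)=\mathcal{O}(h^p)+\mathcal{O}(\eps h)$, and the same computation applies to $z$.

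The main obstacle is not the assembly of the three contributions but verifying that the hypotheses of the convergence theorems of \cite{boscarino2008error} are genuinely met for this class: namely that invertibility of $A$ (or of $\hat{A}$) together with the GSA/stiffly-accurate structure is exactly what guarantees both the solvability of the scheme with an $\eps$-expansion whose remainder is bounded uniformly in $\eps$ and $h$, and that the $z$-component does not lose an extra power of $h$ relative to $y$. The delicate point is that all estimates must hold uniformly in the regime $h\gg\eps$ (that is, $0<\eps<Ch$ and $nh\le C$), where the non-stiff R-K error expansion for the reduced ODE and the stiff higher-index DAE estimates have to be matched, rather than being obtained only for a fixed positive $\eps$.
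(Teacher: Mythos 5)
Your proposal is correct and follows essentially the same route as the paper, which proves this proposition by combining Remark~\ref{rem: index1} with Theorems 5.2 and 6.2 of \cite{boscarino2008error}: the $\eps^0$ term is handled by the GSA property and the reduced-DAE argument, the $\eps^1$ term by the cited higher-index estimates, and the remainder is absorbed using $\eps<Ch$. Your write-up simply fills in the details that the paper leaves to the citations.
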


\noindent
{\em Proof.} The proof of this Proposition is obtained by combining  Remarks \ref{rem: index1} a) b) and by the results in Theorem 5.2  and 6.2 in \cite{boscarino2008error}. 

\bigskip
\noindent
{\bf Proof of Theorem~\ref{thm: IDC_IMEX_R-K}.} {By Proposition \ref{prop: inver_ARK}, the InDC IMEX R-K method constructed by a GSA IMEX R-K method of type A or CK can be viewed as an IMEX R-K method of type A or CK and is GSA. Then the hypothesis of Proposition \ref{classP} are satisfied for the InDC IMEX R-K scheme. The estimates \eqref{final_estimate} are a consequence of Proposition \ref{lemma1} and \ref{classP}.}

We point out that the simplest InDC scheme is constructed by repeated use of the same IMEX scheme of order $p$, and the optimal choice of $M$ is given by $M = s_K = p (K+1)$.
This is our choice in the PDE applications presented in the paper.

\section{Numerical evidence and PDE applications}
\label{sec5}
\setcounter{equation}{0}
\setcounter{figure}{0}
\setcounter{table}{0}

In this section, we consider the following InDC IMEX methods for stiff ODEs and PDEs.
{Below we present a list of IMEX R-K methods that are used in the InDC framework. These include first order GSA IMEX method of type A and ARS, and first order IMEX method of type A but not GSA. For high order IMEX methods, we choose methods of type CK or ARS.} We decide not to use high order GSA IMEX R-K methods of type $A$, because they are more expensive compared with methods of type CK and ARS. In fact, the construction of such methods involves many internal stages. For example for a second order GSA IMEX R-K method of type $A$, we require more than three ($s > 3$) internal stages. In fact, in \cite{boscarino2013implicit} the authors proved that it is not possible to construct second order GSA IMEX R-K methods of type A with three internal stages. 

\bigskip 
\noindent
{\bf InDC method embedded with a first order IMEX.}
\bit
\item We let InDC-IMEX1-GSA-ARS-M-k denote the InDC methods embedded with first order ($p = 1$) GSA IMEX R-K scheme of type ARS (\ref{first_schemeARS}) with $M$ quadrature points and $k$ correction steps. 
\item We let InDC-IMEX1-NGSA-M-k denote the InDC methods embedded with first order ($p = 1$) non globally stiffly accurate IMEX R-K scheme (\ref{first_schemeA}) with $M$ quadrature points and $k$ correction steps. The first order non globally stiffly accurate IMEX R-K method of type A (IMEX1-NGSA) has the following double Butcher tableau 
\item We let InDC-IMEX1-GSA-A-M-k denote the InDC method constructed with first order ($p = 1$) GSA IMEX R-K scheme of type A (\ref{first_schemeA_GSA}) with $M$ quadrature points and $k$ correction steps. 
\eit
{\bf InDC method embedded with a second order IMEX.}
\bit
\item
{We let InDC-IMEX2-ARS-M-k denote the InDC method constructed with the second order globally stiffly accurate IMEX R-K method of type ARS  with the following double Butcher tableau 
\begin{eqnarray}\label{SesondARS}
\begin{array}{c|ccc}
              0 & 0 & 0 & 0\\
              \gamma & \gamma & 0 & 0\\
           1 & \delta & 1-\delta & 0\\
               \hline
              &\delta &1-\delta & 0
\end{array} \qquad
\begin{array}{c|ccc}
                0 & 0 & 0 & 0\\
                \gamma & 0 &\gamma & 0  \\
               1 & 0& 1-\gamma & \gamma \\
              \hline 
               & 0 & 1-\gamma& \gamma
\end{array},
\end{eqnarray}
where $\gamma = 1-\frac{\sqrt{2}}{2}$ and $\delta = 1- 1/(2\gamma)$,
with $k$ correction steps and $M$ quadrature points.}
 This method has order $p=2$.
\item {We let InDC-IMEX2-CK-M-k denote the InDC method constructed with a second order globally stiffly accurate method of type CK with the following double Butcher tableau and $\gamma = 1-\sqrt{2}/2$
 \begin{eqnarray}\label{PtypeCK}
\begin{array}{c|ccc}
              0 & 0 & 0 & 0\\
              2/3 &  2/3 & 0&0\\
              1 & 1/4 & 3/4 & 0\\
              \hline
              & 1/4  & 3/4 & 0
\end{array} \qquad
\begin{array}{c|ccc}
0 & 0  & 0 & 0\\
               2/3 & 2/3 - \gamma & \gamma & 0\\
              1 & 1/4+\gamma/2& 3/4-3\gamma/2 & \gamma \\ 
              \hline 
              &1/4+\gamma/2& 3/4-3\gamma/2 & \gamma
\end{array} .
\end{eqnarray}
with $k$ correction steps and $M$ quadrature points.}
This method has order $p=2$.
\eit

\noindent
{\bf InDC method embedded with a third order IMEX.}
 \bit
\item We let InDC-IMEX3-ARS-M-k denote  the InDC method constructed with a third order globally stiffly accurate method of type ARS  with the following double Butcher tableau
\beq
\begin{array}{c|ccccc}
0& 0& 0& 0& 0& 0\\
1/2&1/2& 0& 0& 0&0\\
2/3& 11/18&1/18&0& 0& 0\\
1/2& 5/6& -5/6& 1/2& 0& 0\\
1& 1/4& 7/4& 3/4& -7/4& 0\\
\hline
&1/4& 7/4& 3/4& -7/4& 0\\
\end{array}. \ \ \ \ \ 
\begin{array}{c|ccccc}
0&0& 0& 0& 0& 0\\
1/2&0&1/2& 0& 0& 0\\
2/3&0&1/6&1/2& 0& 0\\
1/2&0& -1/2& 1/2& 1/2& 0\\
1& 0& 3/2& -3/2& 1/2& 1/2\\
\hline
& 0& 3/2& -3/2& 1/2& 1/2\\
\end{array}
\eeq
in the prediction and $k$ correction steps and $M$ quadrature points.
This method is stiffly accurate with order $p=3$.
\eit

The indicated order of convergence by Theorem~\ref{thm: IDC_IMEX_R-K} for the $y$ and $z$ components in the SPPs are summarized in Table~\ref{tab: error}. For the InDC-IMEX1-GSA-ARS-M-k and InDC-IMEX1-GSA-A-M-k methods, the order of convergence will increase with $k$ for the $\eps^0$ error term when $\eps \ll H$ and  $k\le M-1$, leading to a term of $H^{\min(k+1, M)}$ for the differential and algebraic component.
 Similar comments apply to the  InDC-IMEX2-ARS-M-k, InDC-IMEX2-CK-M-k, as well as the InDC-IMEX3-ARS-M-k with the order of accuracy for the first error term increased by $2$ or $3$ per correction step respectively. 
The order of convergence for the $\eps^1$ error term is $\eps H$ for all schemes we consider here. 
Note that for those InDC-IMEX methods with the same order of accuracy for the index 1 problem (i.e. when $\eps=0$), the complexity measured by the number of function evaluations is comparable. For example, when $M=8$, the number of function evaluations for the InDC-IMEX1-GSA-ARS-8-7, InDC-IMEX2-ARS-8-3, InDC-IMEX2-CK-8-3 are the same. Note that all of these methods achieve eighth order accuracy for the index 1 problem when $\eps=0$. {On the other hand, the number of function evaluations for InDC-IMEX1-GSA-A-8-7 is twice as much as InDC-IMEX1-GSA-ARS-8-7, e.g. see the number of stages in Butcher tableaus in \eqref{eq: semi-implicit-InDC_A} and \eqref{eq: semi-implicit-InDC_B}. From the computational cost point of view, the InDC-IMEX1-GSA-ARS method is preferred. For the sake of completeness, we present results for the InDC-IMEX1-GSA-A method for the ODE Van der Pol equation, but not for the PDE examples.}
 

\begin{table}[htb]
\begin{center}
\caption{{
Global error predicted by Theorem~\ref{thm: IDC_IMEX_R-K} with $H \gg \eps$. 
}\label{tab: error}
}
\bigskip
\begin{tabular}{|c | c|c|}
\hline
\cline{1-3} Method & $y-$comp &$z-$comp \\
\hline
\cline{1-3}  InDC-IMEX1-GSA-ARS-M-k & $H^{\min(k+1,M)} + \eps H$ & $H^{\min(k+1,M)} + \eps H$ \\
\hline
\cline{1-3}  InDC-IMEX1-GSA-A-M-k & $H^{\min(k+1,M)} + \eps H$ & $H^{\min(k+1,M)} + \eps H$ \\
\hline
\cline{1-3}   InDC-IMEX2-ARS-M-k& $H^{\min(2(k+1),M)} + \eps H$ & $H^{\min(2(k+1),M)} + \eps H$ \\
\hline
\cline{1-3}  InDC-IMEX2-CK-M-k  & $H^{\min(2(k+1),M)} + \eps H$ & $H^{\min(2(k+1),M)} + \eps H$ \\
\hline
\cline{1-3}  InDC-IMEX3-ARS-M-k  & $H^{\min(3(k+1),M)} + \eps H$ & $H^{\min(3(k+1),M)} + \eps H$\\
\hline
\end{tabular}
\end{center}
\end{table}

\subsection{Van der Pol example} 
\label{sec: 5.1}

For numerical verification, we test a standard nonlinear oscillatory
test problem, Van der Pol�s equation with well-prepared initial data up to $\mathcal{O}(\eps^3)$,  \cite{hairer1993solving2}:
\beq
\left\{
\begin{array}{l}
y' = z \\
\eps z' = (1-y^2) z - y
\end{array}
\right.,
\quad
\left\{
\begin{array}{l}
y(0) = 2 \\
z(0) = -\frac23 + \frac{10}{81}\eps -\frac{292}{2187} \eps^2
\end{array}
\right.
\eeq
and $\varepsilon = 10^{-6}$.

Numerical observations in Figures \ref{fig1} and \ref{fig2} are consistent with Theorem~\ref{thm: IDC_IMEX_R-K} and Table~\ref{tab: error}. They produce estimates for the $y$ and $z$ component in the form of equation \eqref{final_estimate}. Especially, numerical results of InDC method constructed with first order IMEX methods presented in Figure~\ref{fig1} and of InDC method constructed with high order (second or third order) InDC-IMEX2-ARS-M-K presented in Figure~\ref{fig2}, confirm the theoretical prediction \eqref{final_estimate}, i.e., if the IMEX method is GSA, the order of convergence for $\eps^0$ term for the $y$ and $z$ component increases with the correction loops (that is $s_K$ with $K$ the number of correction loops). However, such improvement for $\eps^0$ term is not true if the IMEX method is not globally stiffly accurate (see two panels in the top row of Figure~\ref{fig1} for InDC-IMEX1-NGSA method). 
Furthermore, in the estimate \eqref{final_estimate}, the InDC-IMEX methods exhibit order reduction both in differential and algebraic components (see Table~\ref{tab: error} for every type of InDC method). This phenomenon appears, since the $\eps^1$ term of the error behaves like $\mathcal{O}(\eps H)$  in \eqref{final_estimate} for both $y$ and $z$ components (see in Figures \ref{fig1} and \ref{fig2}). When $H$ is very small, the $\mathcal{O}(\varepsilon H)$ term is dominant in the estimates of $y$ and $z$-components respectively. {Finally, we want to comment that, despite the same order of convergence, the magnitude of errors of InDC-IMEX1-GSA-A-7-3 are much smaller than those of InDC-IMEX1-GSA-ARS-7-3. As we mentioned earlier, the InDC-IMEX1-GSA-ARS method costs less number of function evaluations and is more efficient.
} 

\begin{figure}
\centering
\includegraphics[width=2.5in]{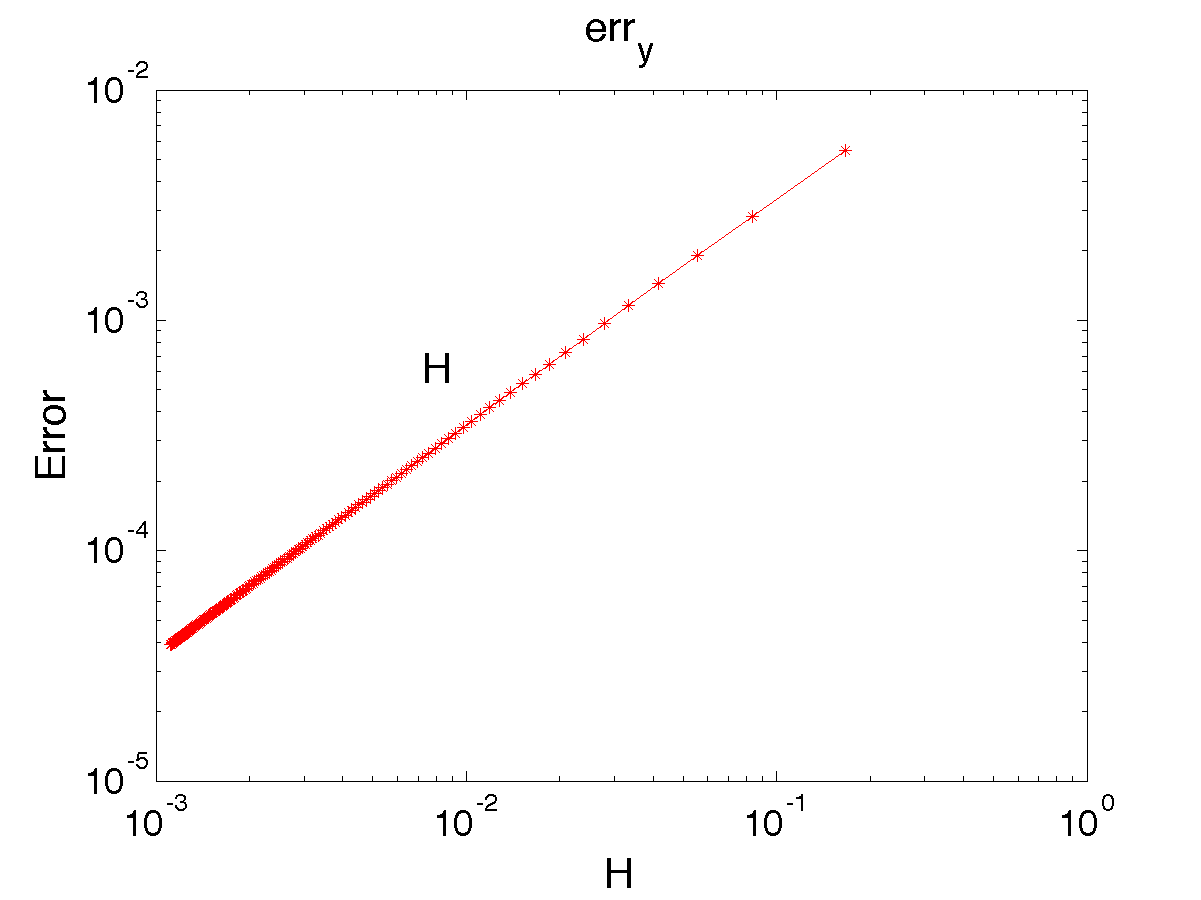},
\includegraphics[width=2.5in]{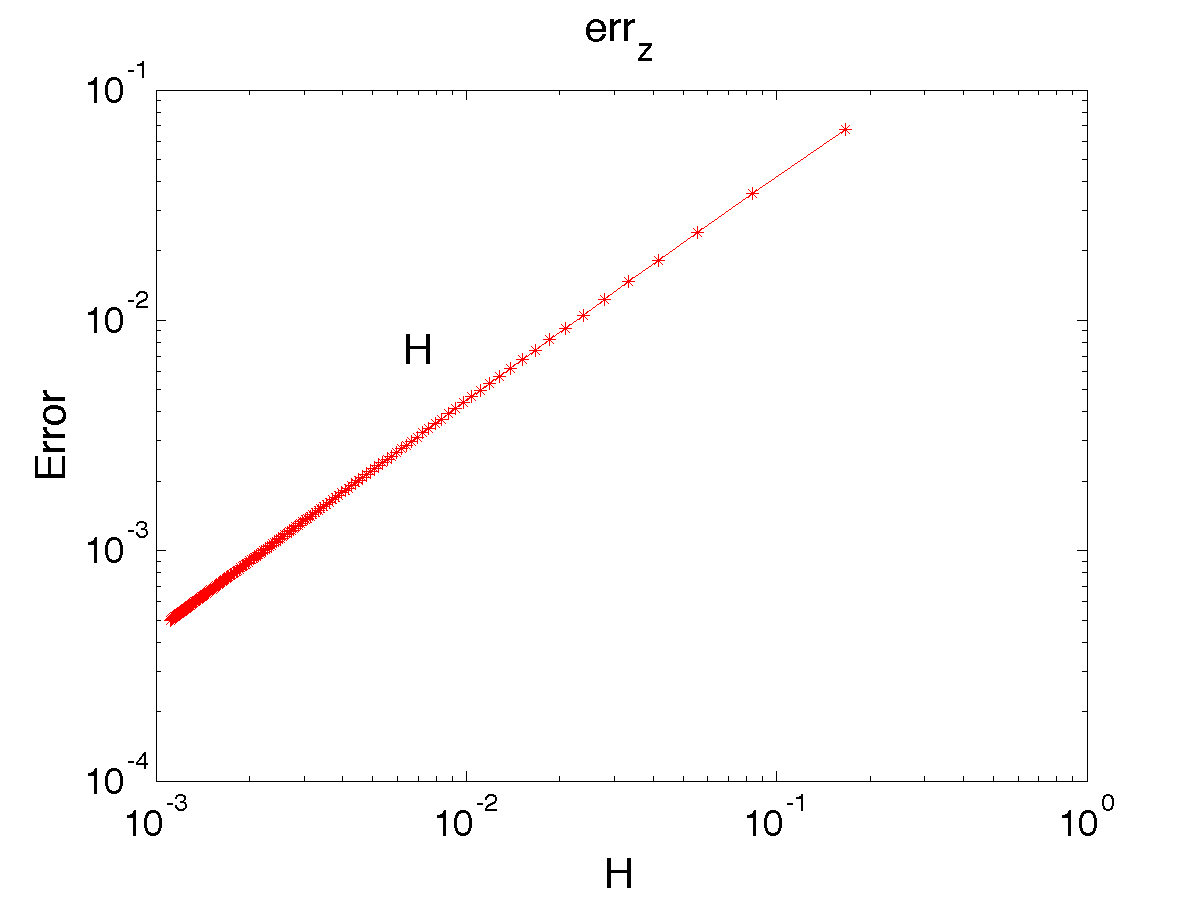}
\centering
\includegraphics[width=2.5in]{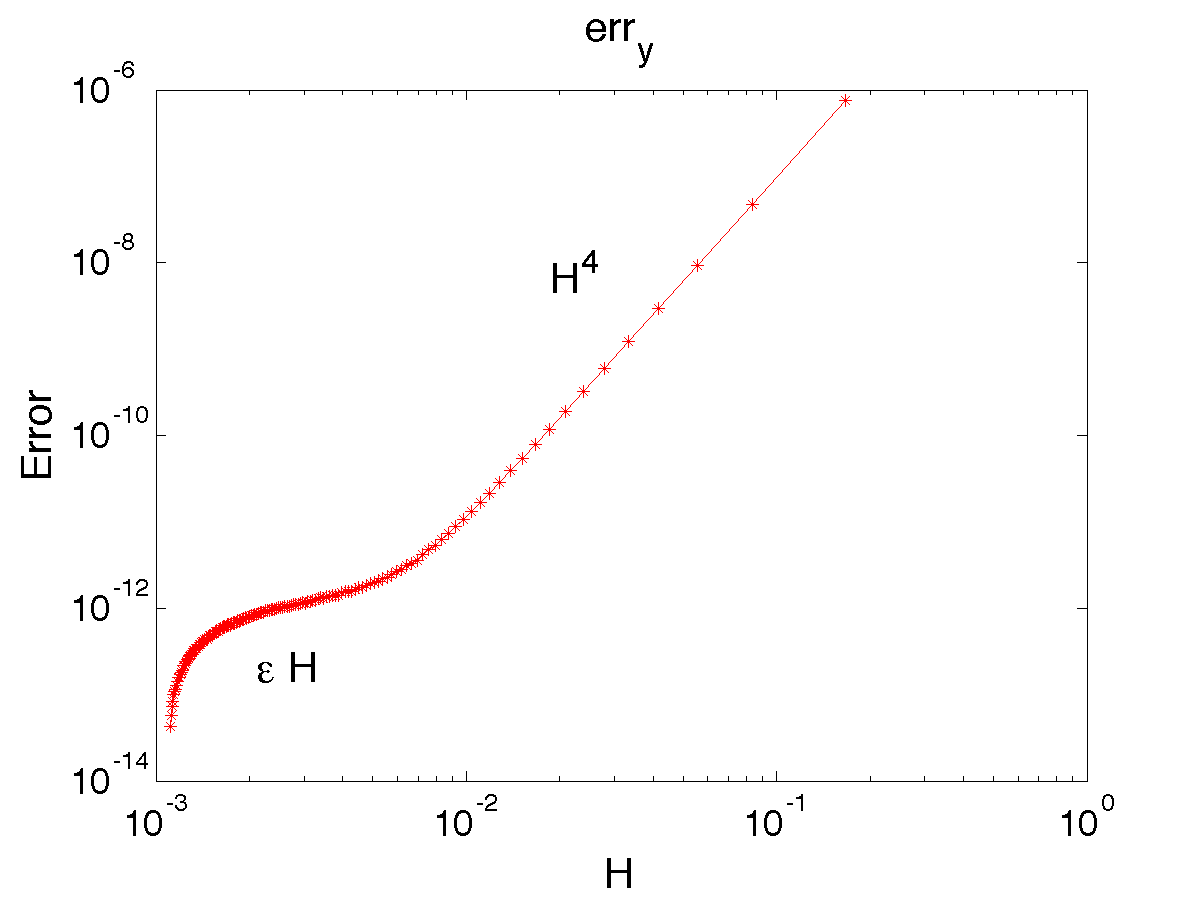},
\includegraphics[width=2.5in]{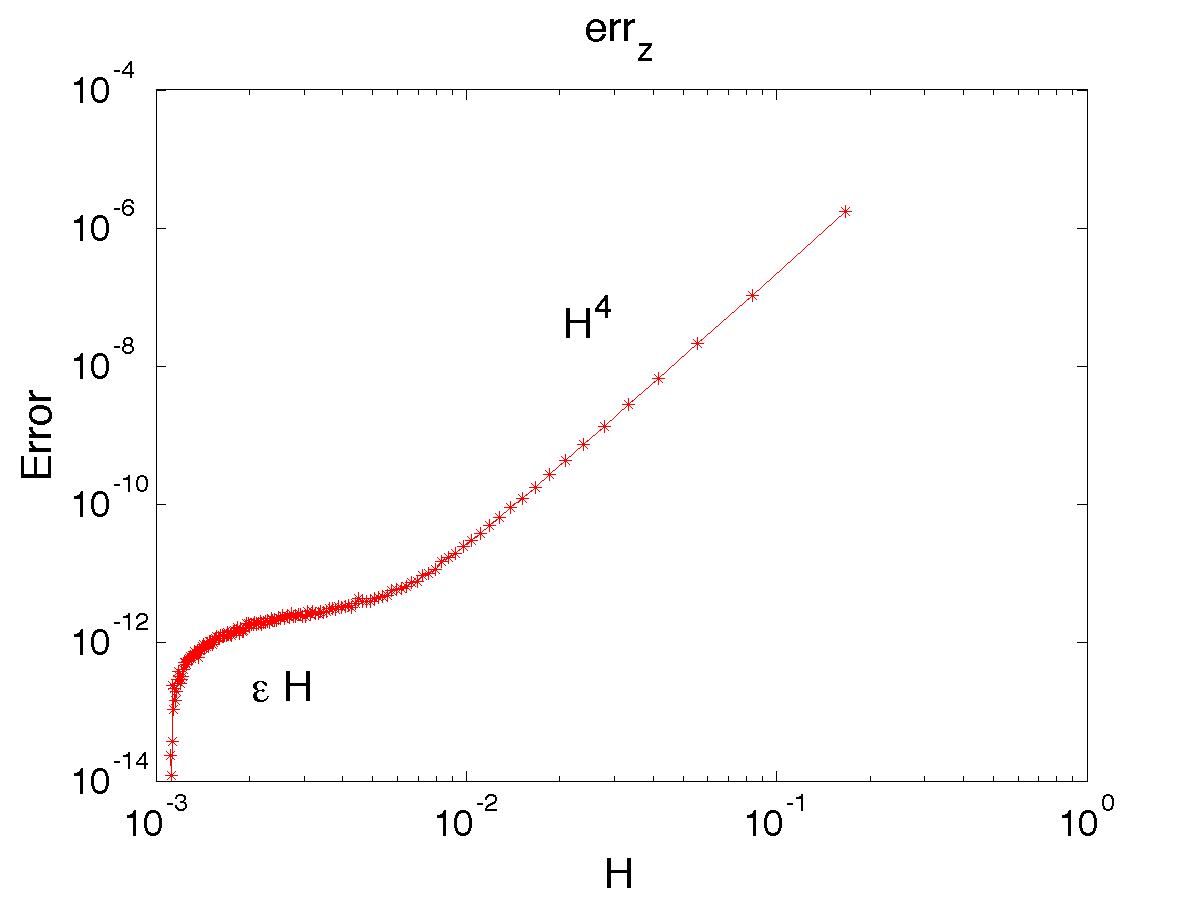}
\centering
\includegraphics[width=2.5in]{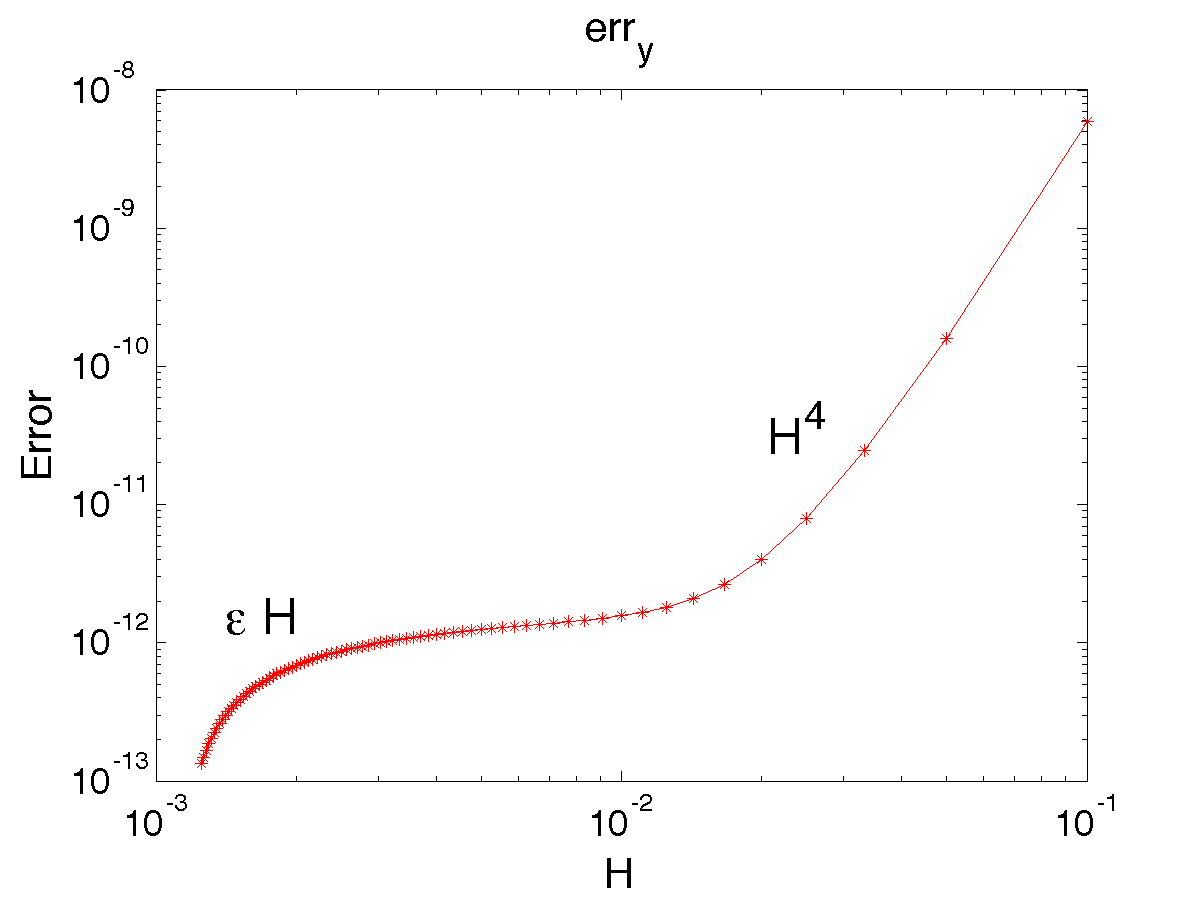},
\includegraphics[width=2.5in]{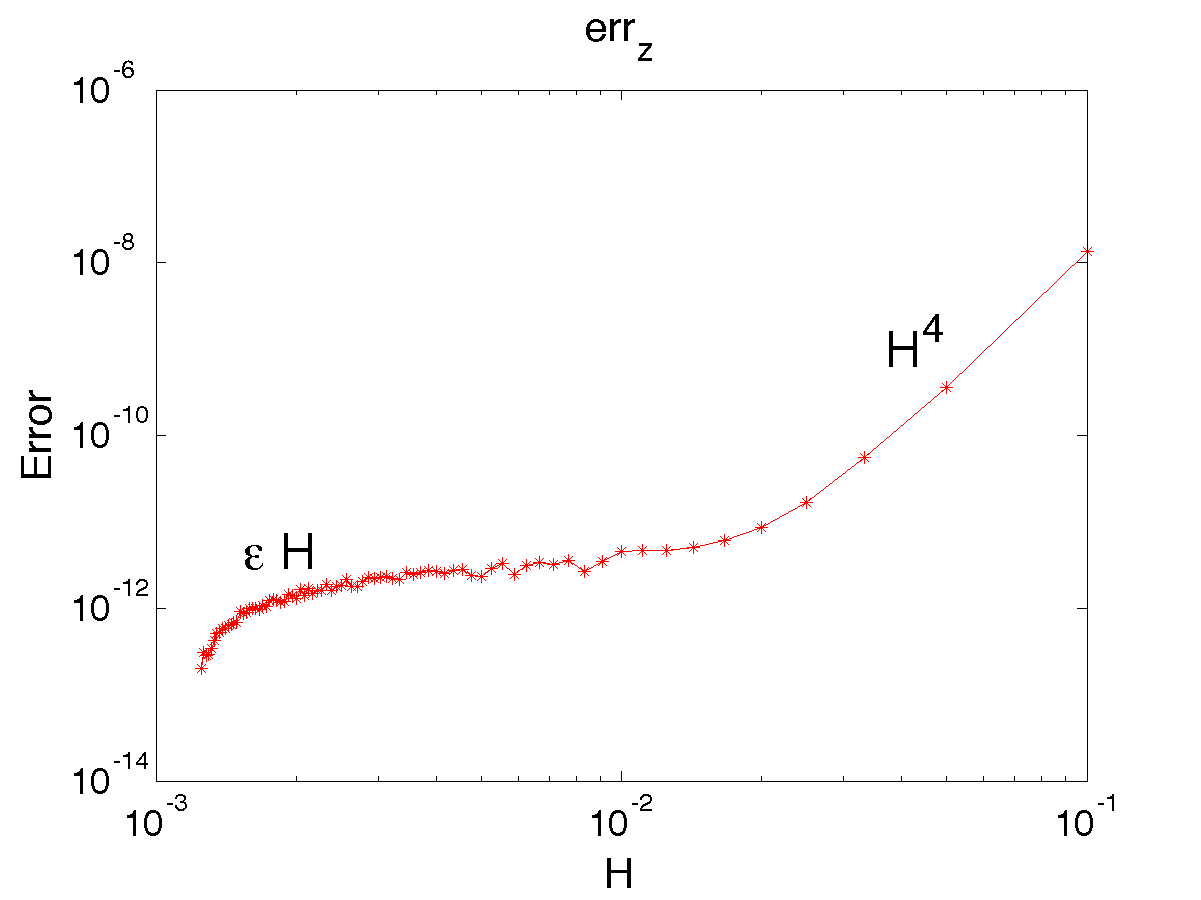}
\caption{
Van der Pol equation.
Global error ($T = 0.5$) of the InDC-IMEX1-NGSA-5-3 method (top row), InDC-IMEX1-GSA-ARS-7-3 method (middle row),
and InDC-IMEX1-GSA-A-7-3 method (bottom row).
$\eps = 10^{-6}$. 
}
\label{fig1}
\end{figure}

\begin{figure}
\centering
\includegraphics[width=2.5in]{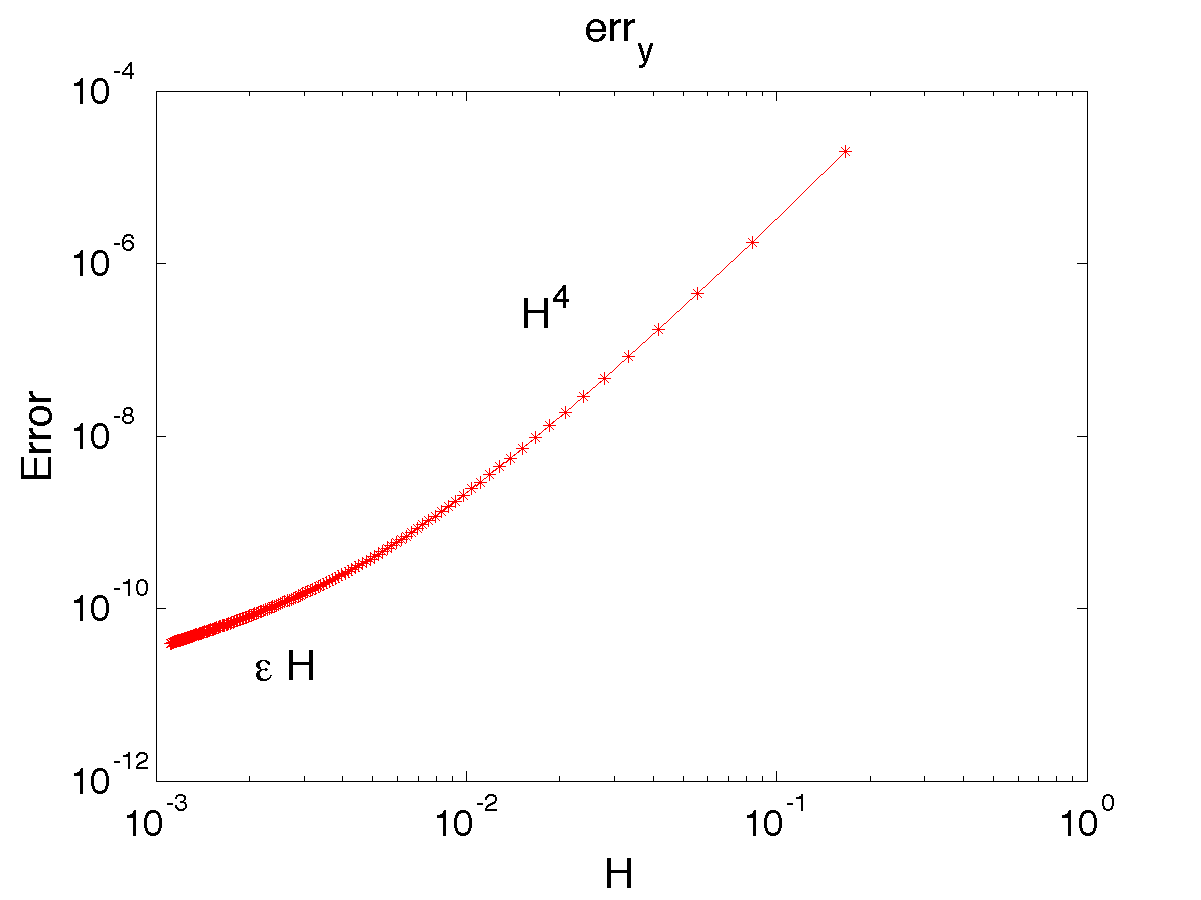},
\includegraphics[width=2.5in]{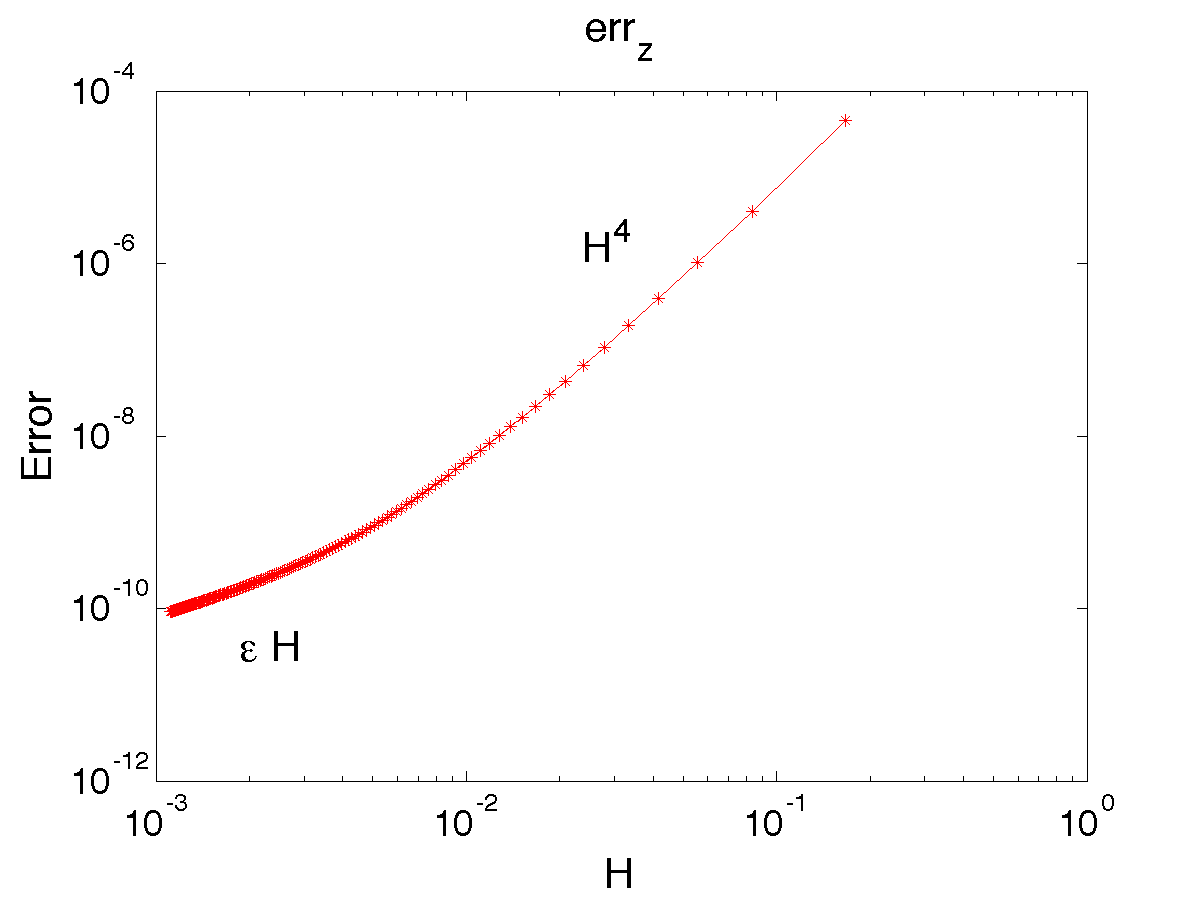}
\centering
\includegraphics[width=2.5in]{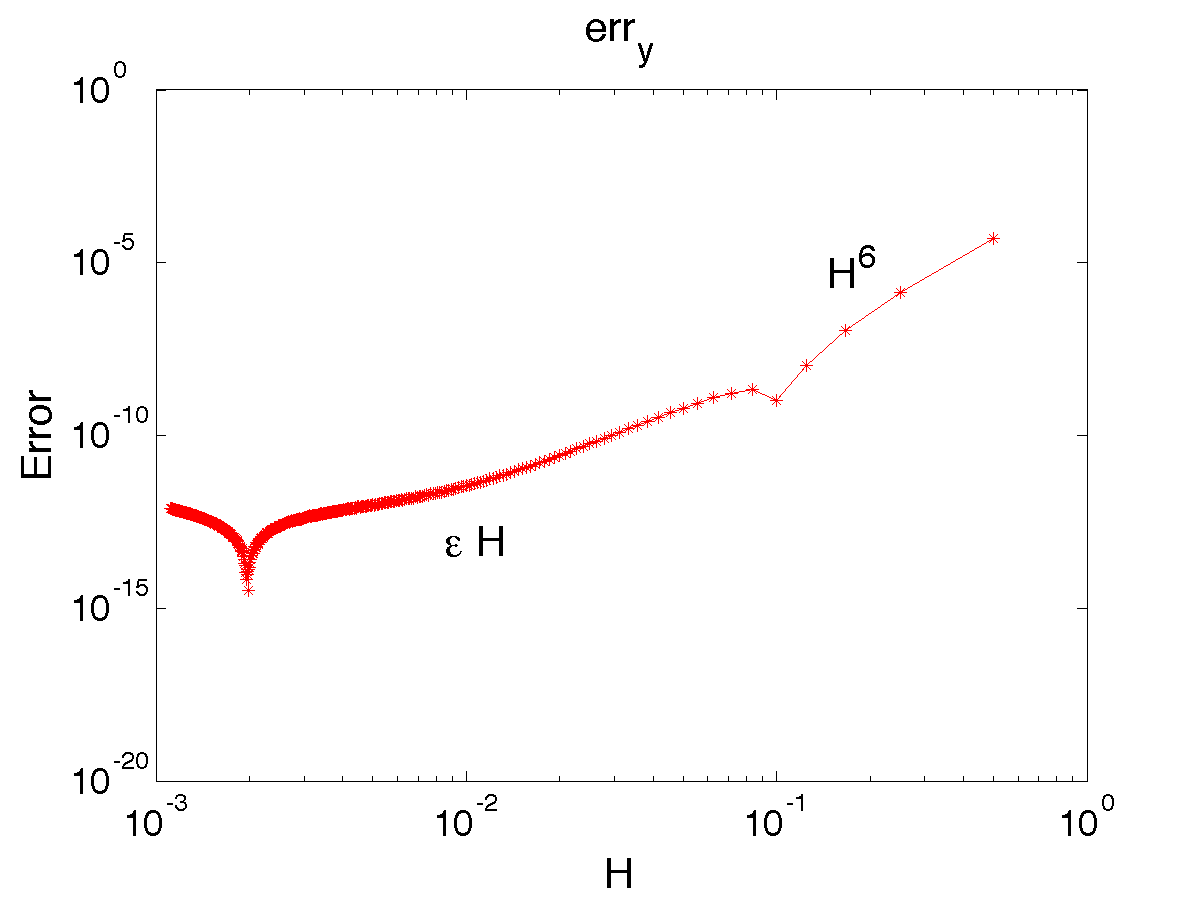},
\includegraphics[width=2.5in]{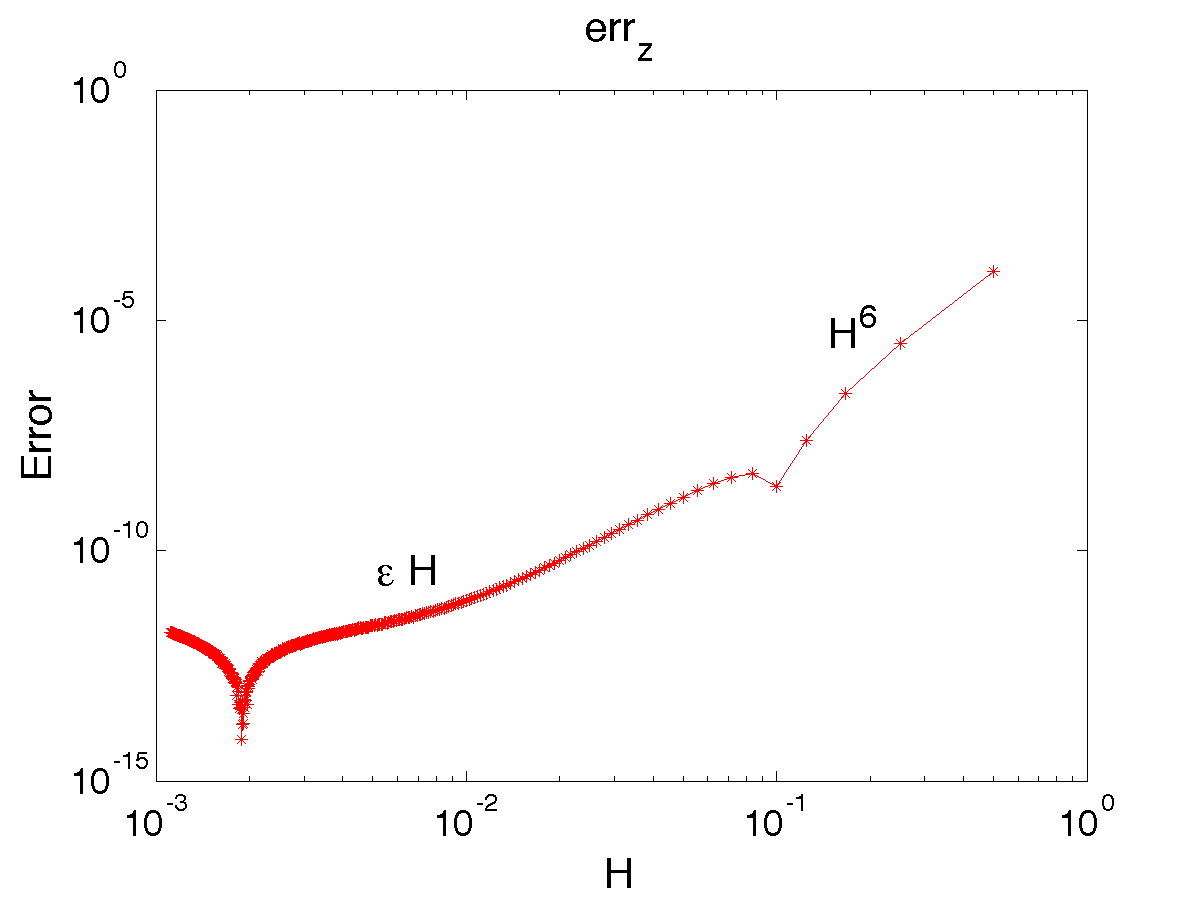}
\centering
\includegraphics[width=2.5in]{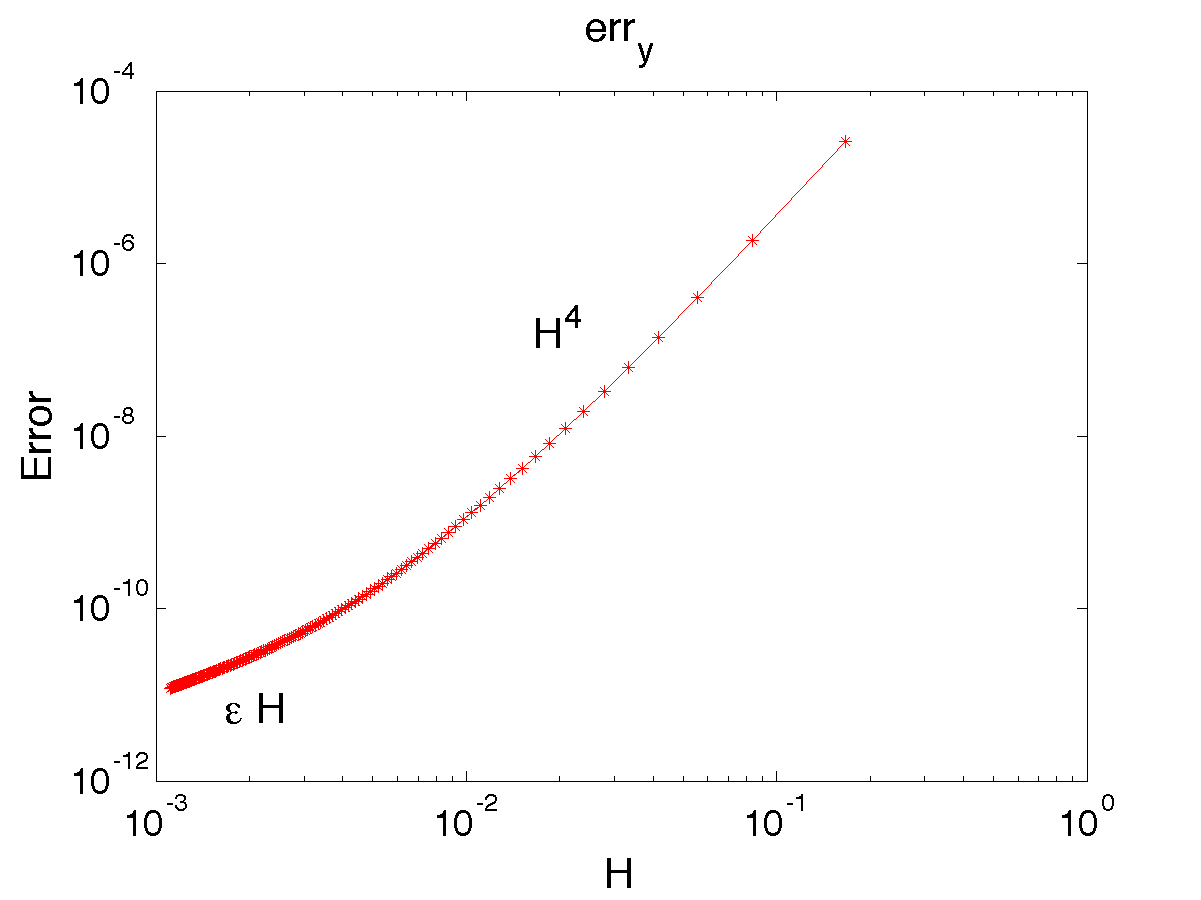},
\includegraphics[width=2.5in]{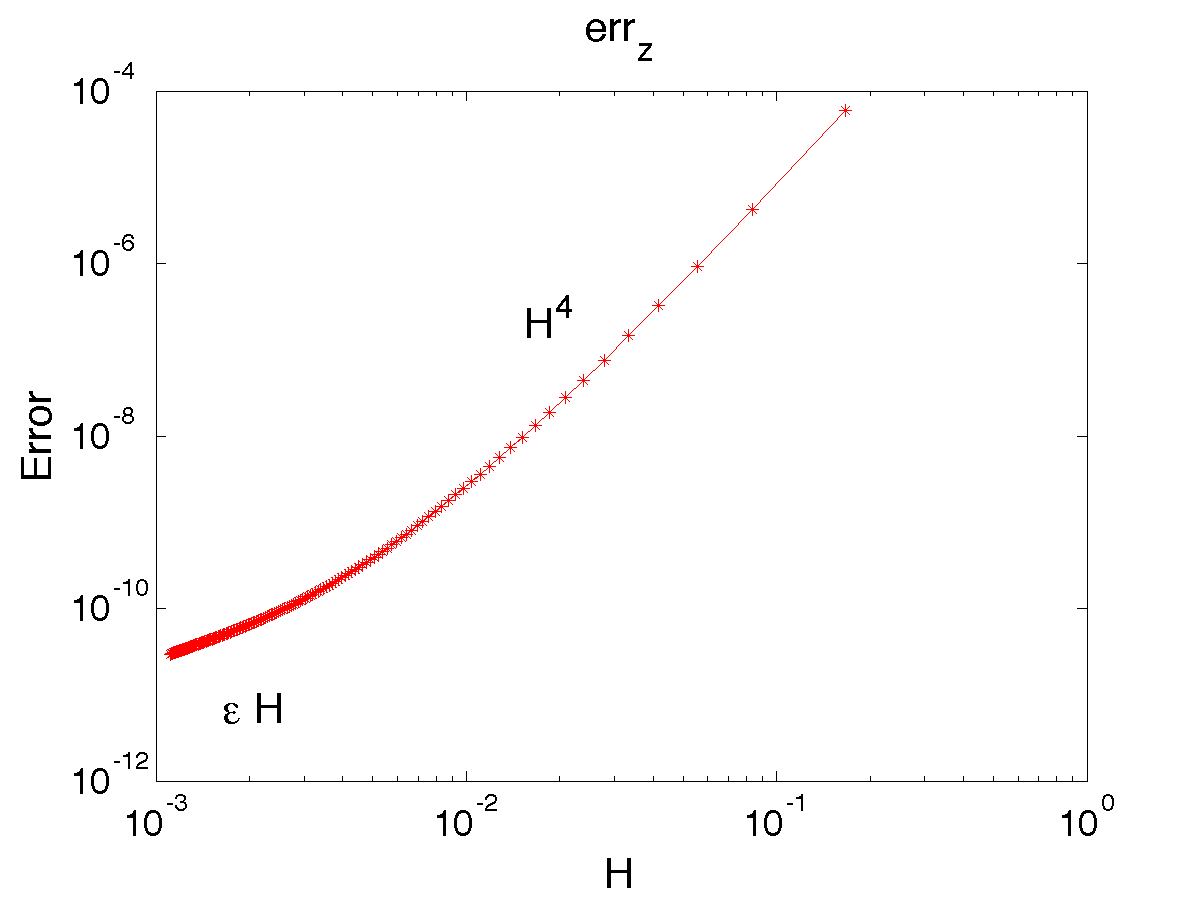}
\centering
\includegraphics[width=2.5in]{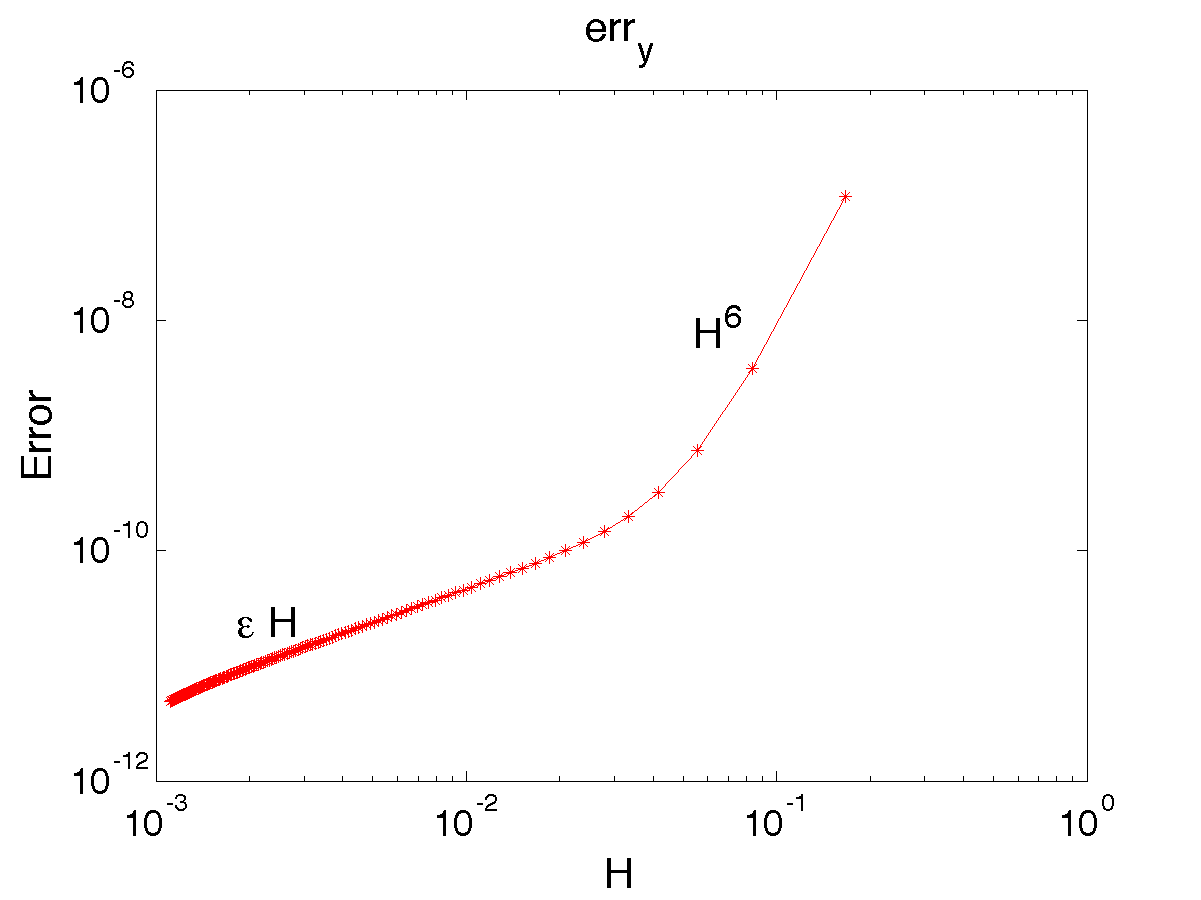},
\includegraphics[width=2.5in]{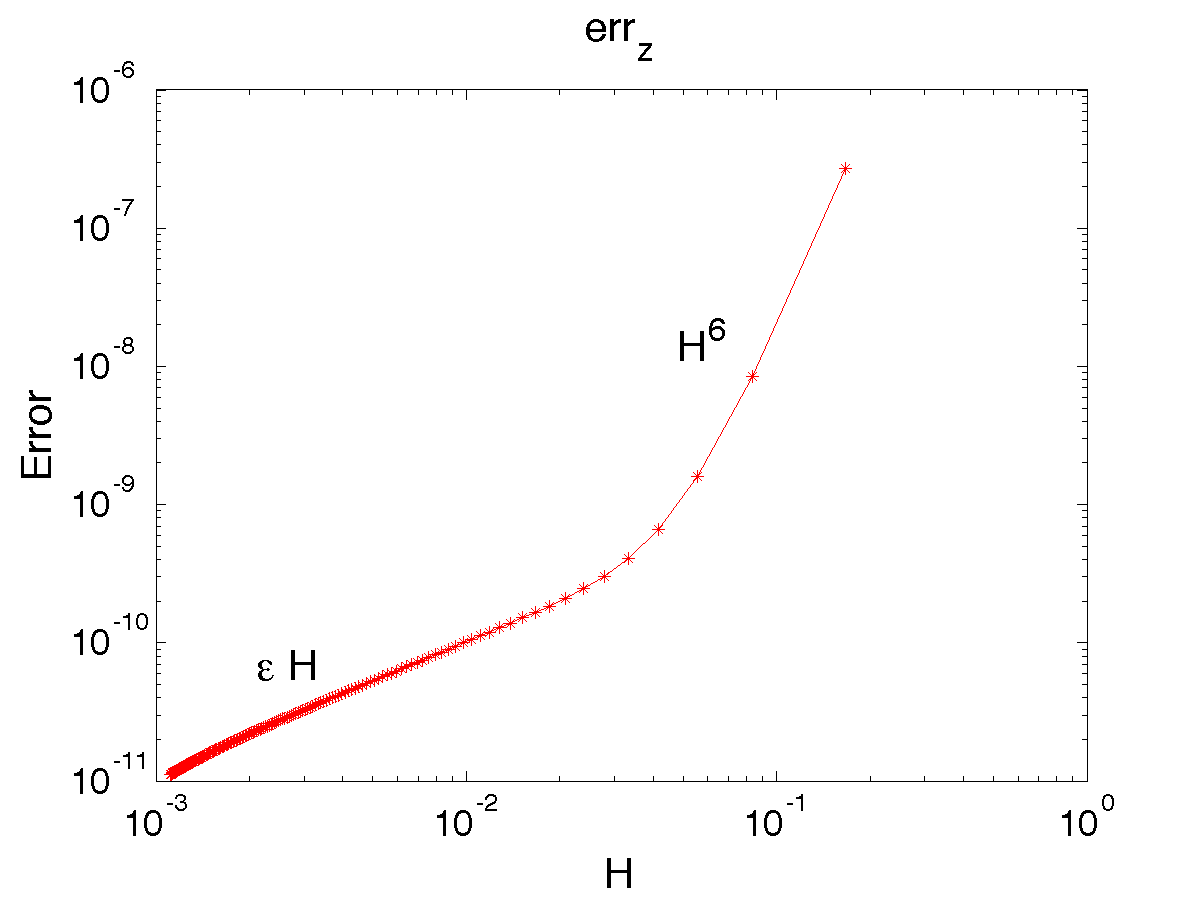}
\caption{
Van der Pol equation.
Global error ($T = 0.5$) of the InDC-IMEX2-ARS-5-1 method (upper row),
of the InDC-IMEX2-ARS-7-2 method (the second row),
of the InDC-IMEX2-CK-5-1 method  (the third row),
and of the InDC-IMEX3-ARS-7-1 method  (bottom row).
$\eps = 10^{-6}$.
}
\label{fig2}
\end{figure}

\subsection{PDEs examples}

In this section we consider systems of the form (\ref{start}), which may arise from a method of lines discretization of PDEs. Notice that in PDEs applications, such
as advection-diffusion problem, convection-diffusion-reaction systems and hyperbolic systems with relaxations, the structure of the problems is
usually of additive form as (\ref{start}).

\begin{exa} \label{BurgTest} ({\bf Advection-diffusion equation})
As an example of advection-diffusion equation, we consider the 1D viscous Burgers' equation,
\beq\label{Burg}
u_t + \left(\frac{u^2}{2}\right)_x = \frac{1}{R}u_{xx}, \quad R>0. 
\eeq
It contains a non-linear advection term $(u^2/2)_x$ and a dissipation term $u_{xx}/R$, where $R$ is called the \emph{Reynolds number}. We remark that, when the Reynolds number is large, the convection term dominates the diffusion term. The solution develops a sharp shock wave front after a certain time of propagation and when $R\to \infty$ discontinuities appear. In this test problem we choose $R = 0.1$ so that the dissipation term dominates the advection one and a smooth solution is produced. 

The system (\ref{Burg}), after a method of lines approach with $U_i (t) = u( x_i, t)$ and $U(t) = (U_1(t), U_2(t), \cdots, U_N(t))^T$, has the form of (\ref{start}). The convection term $-(u^2/2)_x$ is considered as non-stiff and is integrated using the explicit part of the method; it is discretized by high order finite difference discretization with Weighted-Essentially Non Oscillatory (WENO) reconstruction \cite{shu1998essentially}.
The diffusion term $u_{xx}$ is considered stiff and is treated implicitly; it is discretized by the standard fourth order finite difference technique, excepted at the nearby boundary points where a 3-rd order formula was implemented.

To check the temporal order of convergence, we consider the periodic boundary condition and a smooth initial condition $u(x, 0) = \sin(\pi x)$ with $x\in[0, 1]$. The equation has been integrated to time $T = 0.02$. Notice that the solution drops dramatically from $sin(\pi x)$ to zero within the first $0.05$ second. The exact solution of this problem was obtained by Cole \cite{cole1951quasi} so that numerical comparison can be done.
In the Tables~\ref{tab: ConvRate2} and ~\ref{tab: ConvRate}, we show the $L^2$ error and the corresponding order of convergence in time ``Order''  for a sequence of mesh sizes $\Delta t =\Delta x= 1/N$.
We use the InDC-IMEX1 method with one or two corrections, corresponding to a second or third order time integration scheme and InDC-IMEX2-ARS-GSA-4-2 method with one correction, corresponding to a fourth order scheme. The expected temporal order of convergence is numerically observed. 

\begin{table}[htb]
\begin{center}
\caption{{Example \ref{BurgTest}. Accuracy test with the Burgers' equation
 for InDC- IMEX1-M-k GSA, with k = 1, 2 corrections.
}\label{tab: ConvRate2}
}
\bigskip
\begin{tabular}{|c | c|c|c|}
\hline
\cline{1-3} Method & $\Delta t$& $L_2$ error &Order \\
\hline
\cline{1-3}   InDC-IMEX1-2-1 & 1/96&$2.4551e-02$ & $--$ \\
\hline
\cline{1-3}   InDC-IMEX1-2-1 & 1/192& $9.0755e-03$ & $1.435$ \\
\hline
\cline{1-3}    InDC-IMEX1-2-1   &{1/384}& $2.7988e-03$ & $1.697$ \\
\hline
\cline{1-3}   InDC-IMEX1-2-1 &{1/768} &$7.9568e-04$ & $1.814$\\
\hline
\cline{1-3}   InDC-IMEX1-2-1  &{1/1536} &$2.1206e-04$ & $ 1.907$\\
\hline
\cline{1-3}   InDC-IMEX1-3-2  & {1/3072}& $5.4748e-05$ & ${1.953}$\\
\hline
\hline
\hline
\cline{1-3}   InDC-IMEX1-3-2 & 1/96& $6.9529e-04$ & $--$ \\
\hline
\cline{1-3}   InDC-IMEX1-3-2& 1/192& $2.2174e-04$ & $1.648$ \\
\hline
\cline{1-3}    InDC-IMEX1-3-2  & 1/384& $4.1888e-05$ & $2.404$ \\
\hline
\cline{1-3}   InDC-IMEX1-3-2 & 1/768& $6.5849e-06$ & $2.669$\\
\hline
\cline{1-3}   InDC-IMEX1-3-2  & 1/1536& $9.1527e-07$ & $2.846$\\
\hline
\cline{1-3}   InDC-IMEX1-3-2  & {1/3072}& $1.2111e-07$ & ${2.920}$\\
\hline
\end{tabular}
\end{center}
\end{table}
\begin{table}[htb]
\begin{center}
\caption{{
Example \ref{BurgTest}. Accuracy test with the Burgers' equation example
for second order IMEX2-ARS GSA method and InDC-IMEX2-ARS-GSA-M-k, with k = 1 correction.
}\label{tab: ConvRate}
}
\bigskip
\begin{tabular}{|c | c|c|c|}
\hline
\cline{1-3} Method & N & $L_2$ error & Order \\
\hline
\cline{1-3}  InDC-IMEX2-ARS-GSA-4-2 &1/48& $7.6079e-04$ & $--$ \\
\hline
\cline{1-3}   InDC-IMEX2-ARS-GSA-4-2 &1/96& $2.7095e-05$ & $4.811$ \\
\hline
\cline{1-3}   InDC-IMEX2-ARS-GSA-4-2  &1/192& $1.1064e-06$ & $4.614$ \\
\hline
\cline{1-3}   InDC-IMEX2-ARS-GSA-4-2  &1/384& $5.5847e-08$ & $4.302$\\
\hline
\cline{1-3}   InDC-IMEX2-ARS-GSA-4-2  &1/758& $3.4640e-09$ & $4.010$\\
\hline
\cline{1-3}   InDC-IMEX2-ARS-GSA-4-2  &1/1536 & $2.3912e-10$ & $3.856$\\
\hline
\end{tabular}
\end{center}
\end{table}
\end{exa}


Next we consider hyperbolic systems with stiff relaxation.  These systems have the form
\beq \label{HypMain}
\begin{array}{l}
\mathcal{U}_t + \mathcal{F}(\mathcal{U})_x = \frac{1}{\eps}\mathcal{G}(\mathcal{U}),
\end{array}  
\eeq
where $\mathcal{U} \in \mathbb{R}^N$, $\mathcal{F, G}: \mathbb{R}^N \to \mathbb{R}^N$ and the Jacobian matrix $\mathcal{F}'(\mathcal{U})$ has real eigenvalues and admits
a basis of eigenvectors for all $\mathcal{U} \in \mathbb{R}^N$ and $\varepsilon  > 0$ is the stiffness parameter. The operator $\mathcal{G}: \mathbb{R}^N \to \mathbb{R}^N$ is called a relaxation operator. (\ref{HypMain}) defines a relaxation
system. For mathematical results concerning this kind of problems we refer to \cite{liu1987hyperbolic} and \cite{chen1994hyperbolic}.
Same as for the previous example, we discretize the system (\ref{HypMain}) by a finite difference scheme with $U_i (t)$ approximates the solution at grid point $x_i$, $i=1, \cdots N$ and 
$U(t) = (U_1(t), U_2(t), \cdots, U_N(t))^T$ be solutions at all grid points. With the method of line approach, $U(t)$ satisfies a system in the form of 
(\ref{start}), where the function $F({U})$ in \eqref{start} as a discretization of the convective term $-\mathcal{F}(\mathcal{U}))_x$ is being treated explicitly, and $G(U)$ as a discretization of the source term $\mathcal{G}(\mathcal{U})$ is being treated implicitly.

A prototype example that we will use to illustrate our theoretical findings is the following $2\times 2$ nonlinear hyperbolic system with relaxation
(\cite{boscarino2010class}, \cite{pareschi2005implicit}, \cite{chen1994hyperbolic}) 
\beq \label{Hyp1}
\begin{array}{l}
u_t + f_1(u,v)_x  = 0, \\
v_t + f_2(u, {v})_x = \frac{1}{\eps}g(u,v).
\end{array}  
\eeq
System (\ref{Hyp1}) is a particular case of (\ref{HypMain}) with $\mathcal{U} = (u, v)^T$ , $u \in \mathbb{R}^n$, $v \in \mathbb{R}^{N-n}$, $n<N$, $\mathcal{F} = (f_1, f_2)^T$
and $\mathcal{G} = (0, g)^T$. 
Note that system (\ref{Hyp1}) possesses a unique local equilibrium, namely, $g(u,v) = 0$ implies $v = q(u)$ and, at the local equilibrium, one has the macroscopic
system
\beq \label{Hyp1_2}
\begin{array}{l}
u_t + f_1(u, q(u))_x = 0.
\end{array}  
\eeq
Equation (\ref{Hyp1_2}) can be derived by sending $\varepsilon$ in (\ref{Hyp1}) to zero, the so-called zero relaxation limit, (\cite{boscarino2010class}, \cite{pareschi2005implicit}, \cite{chen1994hyperbolic}).
If we use a method of lines to (\ref{Hyp1}), we get a ODE system of the form (\ref{start}). 
\begin{exa}\label{HSwR}
Consider a linear system with stiff relaxation source term
\beq \label{Hyp}
\begin{array}{l}
\partial_t u + \partial_x v  = 0, \\
\partial_t v + \partial_x u = -\frac{1}{\eps}(v - bu), \quad x \in [0, 2], \quad t \in [0, T],
\end{array}  
\eeq
with $\eps > 0$ and $b$ constant. Here $\mathcal{F}(\mathcal{U}) = (v, u)^T$ and $\mathcal{G}(\mathcal{U}) = (0, (v-bu))^T$.  

{To gain an understanding of the system (\ref{Hyp}), we} 
consider a formal expansion of solutions in the form
\beq \label{expHyp}
\begin{array}{l}
u = u_0  + u_1 \varepsilon + \mathcal{O}(\varepsilon^2), \\
v = v_0  + v_1 \varepsilon + \mathcal{O}(\varepsilon^2), \\
\end{array}  
\eeq
and insert it in (\ref{Hyp}).
Collect leading terms in approximating the second equation of (\ref{Hyp}) (order $\varepsilon^0$), we get $v_0 = bu_0$, plug which into the first equation, we obtain $\partial_t u_0 + b\partial_x u_0 = 0$. This equation is formally obtained as $\varepsilon \to 0$, and we call it \emph{reduced} equation. 
Next, we consider the first-order correction to the leading term approximation. By looking for the order $\varepsilon$ correction to the approximation (\ref{Hyp}) we get 
{Then, by keeping first-order terms in the expansion (\ref{expHyp}) and neglecting second and higher
order terms, we obtain a dissipative evolutionary equation
\beq 
\begin{array}{l}
v = bu - (1-b^2)\partial_x u\\
\partial_t u + b \partial_x u = \varepsilon \partial_x((1-b^2)\partial_xu).
\end{array}
\eeq
}
The second equation is a convection-diffusion equation with viscosity coefficient $\nu = \varepsilon (1 - b^2)$ and it is dissipative if $|b| \le 1$ (subcharacteristc
condition of Liu \cite{chen1994hyperbolic} for (\ref{Hyp})). 

Then motivated by this analysis we perform an accurate test for this problem considering well-prepared initial data. Note that the initial conditions for the coefficients of $u_i(0)$ in (\ref{expHyp}) can be chosen arbitrarily, but there is no freedom in the choice of $v_i(0)$. We let $u_0(x, 0) = \sin(2\pi x)$, and $u_1(x, 0) = 0$ with $v_0(x, 0) = b u_0(x, 0)$ and $v_1(x, 0) = (b^2 - 1)\partial_x u(x, 0)$. Such initial data is consistent, i.e., $g(u_0(x,0), v_0(x,0)) = (bu_0(x,0) - v_0(x,0)) = 0$ for $\varepsilon = 0$.

We consider a periodic smooth solution and we set $b = 0.5$ and $T= 0.2$. The spatial discretization of the domain has a fixed space mesh $\Delta x = 0.02$.
Our test problems are computed with coarse temporal grids that do not resolve small scales. High accuracy in space is obtained by finite difference discretization with WENO reconstruction \cite{shu1998essentially}. {To compute the error, the exact solution is obtained by the Fourier series}
\[
U_{ex}(x,t) = \sum^{+ \infty}_{\ell = -\infty} U_\ell(t)e^{i\ell x}, \quad V_{ex}(x,t) = \sum^{+ \infty}_{\ell= -\infty} V_\ell(t)e^{i\ell x}
\]
with $U_\ell(t)$ and $V_\ell(t)$ satisfying the ODE system
\begin{eqnarray}\label{2x2}
\begin{array}{l}
\dot{U}_\ell = -i\ell V_\ell,\\
\dot{V}_\ell = -i\ell U_\ell - \frac{1}{\eps}(V_\ell - bU_\ell) 
\end{array}
\end{eqnarray}
For each $\ell$, system (\ref{2x2}) can be written as a 2x2 constant coefficient homogeneous system which can be solved exactly. {For the initial condition $u(x, 0) = \sin(2\pi x)$ and the corresponding one for $v$, it is sufficient to consider $\ell=1$ only to obtain the exact solution.}

In Figure \ref{fig5}, {we plot the temporal convergence rate of the $u$-component as a function of $\eps$.} 
Here order of accuracy curves are obtained in the following way: on each curve and for each value of $\varepsilon$, the order of accuracy is computed from measuring $L^1$ errors {computed by the difference between the exact solution and the numerical one} for two different time step sizes $(\Delta t, \Delta t/2)$. Expected high order convergence rate is observed in the limiting cases of $\varepsilon \to 0$ and $\varepsilon \approx 1$ for the InDC-IMEX1-M-k when $M=2, 3, 4$ and $k = 1, 2, 3$ 
{and for the InDC-IMEX2-ARS-GSA-M-k when $M= 4$ and $k = 0, 1$ respectively.}
We observe that the convergence rate is increased by InDC correction iterations for sufficiently stiff parameters; however, for intermediate values of the parameter $\varepsilon$, e.g., $10^{-4} < \varepsilon < 10^{-2}$, we have a deterioration of the accuracy, as we expect. This is an indication that these InDC IMEX methods suffer from the phenomenon of order reduction in the mildly stiff regime ($\Delta t = \mathcal{O}(\varepsilon$)), when the classical order is greater than two \cite{boscarino2008error, boscarino2009accurate, CarpenterKennedy}. From the practical point of view, the understanding of this phenomenon is essential in situations where one is interested in the construction of higher order methods.
{In the next example we will give a brief explanation of this lack of accuracy for mildly stiff regime.}
\begin{figure}
\centering
\includegraphics[width=3.2in]{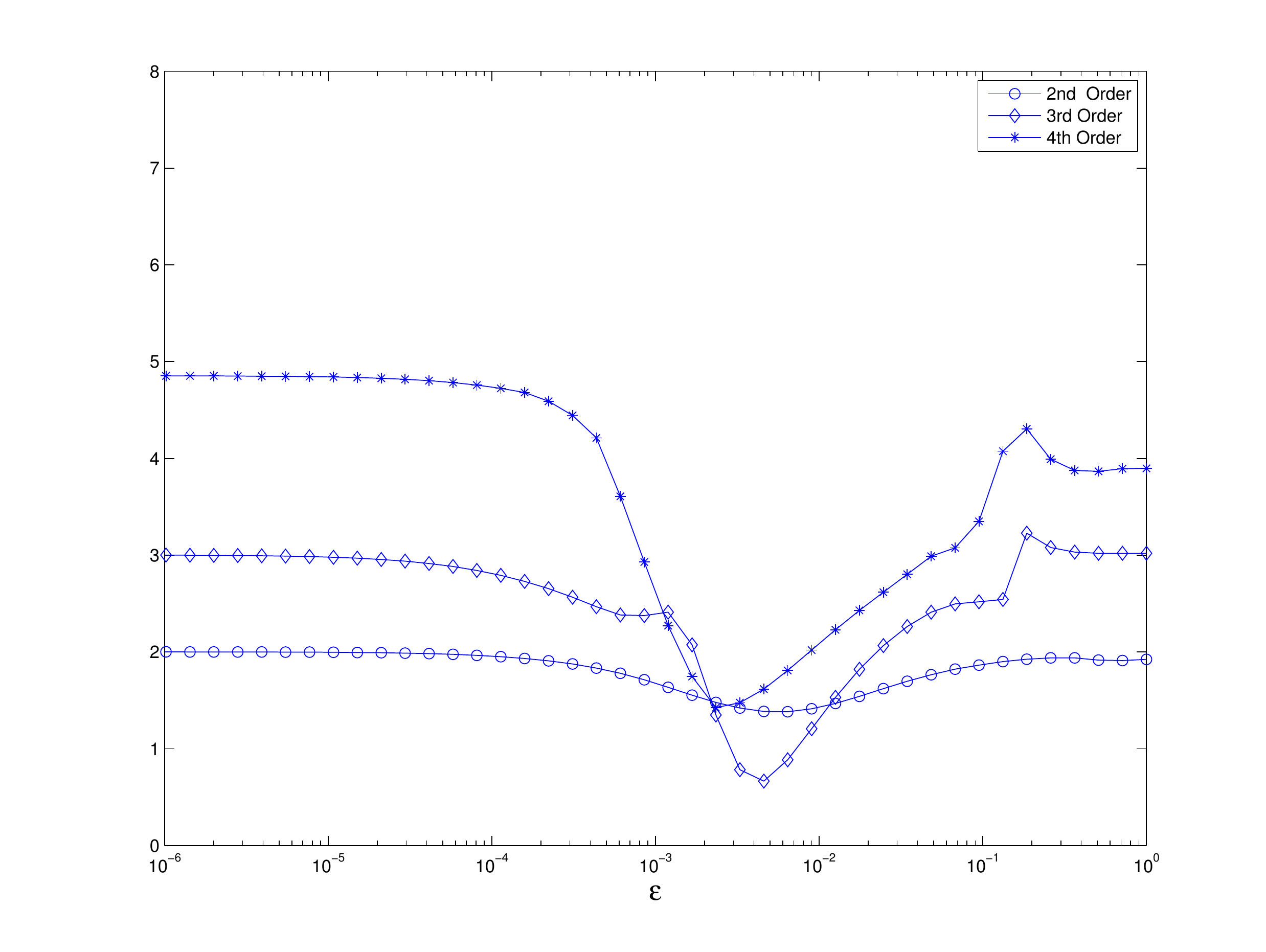}
\centering
\includegraphics[width=3.1in]{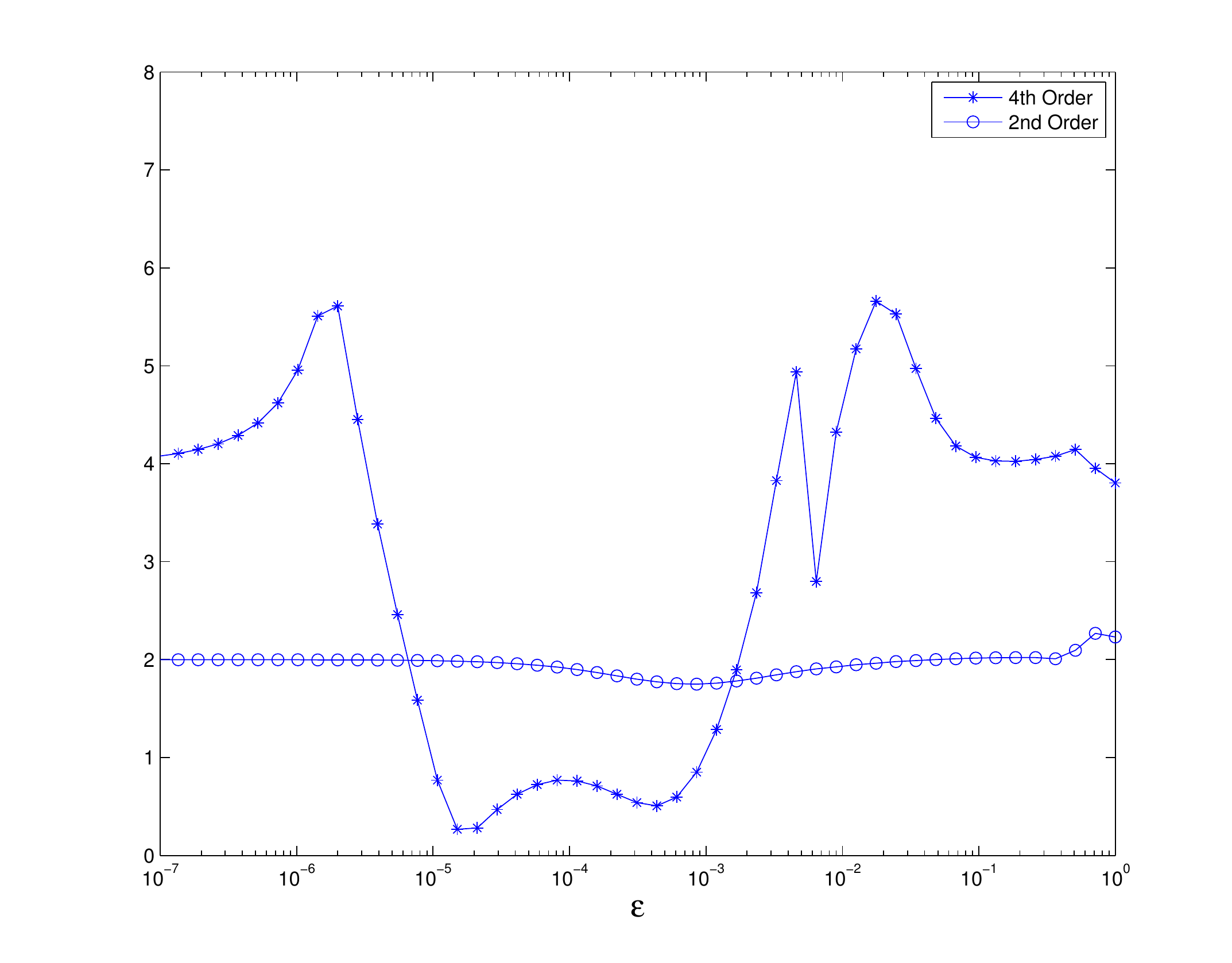}
\caption{Example \ref{HSwR}. Convergence rate for $u$-component versus $\varepsilon$. Left: for the method InDC-IMEX1-M-k, with $M=2, 3, 4$ corresponding to $k = 1, 2, 3$ corrections respectively. 
Right: second order IMEX2-ARS method GSA and InDC-IMEX2-ARS-GSA-M-k, with {$M=4$} and $k = 0, 1$ correction.
}
\label{fig5}
\end{figure}
\end{exa}
\begin{exa}\label{NHSwR}
Consider a nonlinear hyperbolic system with relaxation \cite{jin1995runge, boscarino2010class}
\begin{eqnarray} 
\label{Hyp2}
\begin{array}{l}
h_t + w_x  = 0, \\
w_t + (h+0.5 h^2)_x = -\frac{1}{\eps}(w - 0.5 h^2),
\end{array}  
\end{eqnarray}
where $\mathcal{U} = (h, w)$, $\mathcal{F}(\mathcal{U}) = (w, (h+0.5 h^2))^T$ and $\mathcal{G}(\mathcal{U}) = (0, -(w - 0.5 h^2))^T$. We consider a \emph{well-prepared} initial data given by $h(0,x) = 1 + 0.2\sin(8 \pi x)$ 
and $w = w_0 + \varepsilon w_1$ with $w_0 = f(h(0,x)) = 0.5h^2(0,x)$ and $w_1 = (f'(h(0,x)) - p'(h(0,x))\partial_xh(0,x)$
where $p(h) = (h + 0.5h^2)$. {The initial conditions are designed to be well-prepared in the same way as the previous example.}
The boundary condition is periodic. We evolve the solution to $T_{final} = 0.1$ before the shock forms. We perform our numerical test using a fixed space mesh with $\Delta x = 0.01$ for $x \in [0, 1]$. 

In Fig. \ref{fig6}, we plot the temporal convergence rate as a function of $\eps$, which is computed in the same fashion as those in Fig.~\ref{fig5}. Each point of the graph marked by a triangle in Figure \ref{fig6} is given by the expression 
\[
   {\rm order}_h(\eps) = \log_2(E_{h/2}(\eps)/E_h(\eps))
\]
where the errors $E_{h/2}$ and $E_h$ are computed for the same value of $\eps$. For example, the values along the dashed line in Figure \ref{fig6bis} are used to compute the point on Figure \ref{fig6} corresponding to $\eps=10^{-4}$. 
A similar behavior is observed as that in Fig.~\ref{fig5}: order of convergence strongly depends on $\varepsilon$. In particular, for small and large values of $\varepsilon$, the order of convergence is increased by InDC correction iterations, while for intermediate values of $\eps$, we observe that the order of convergence goes down to small values, even below 1. 
\begin{figure}
\centering
\includegraphics[width=3.0in]{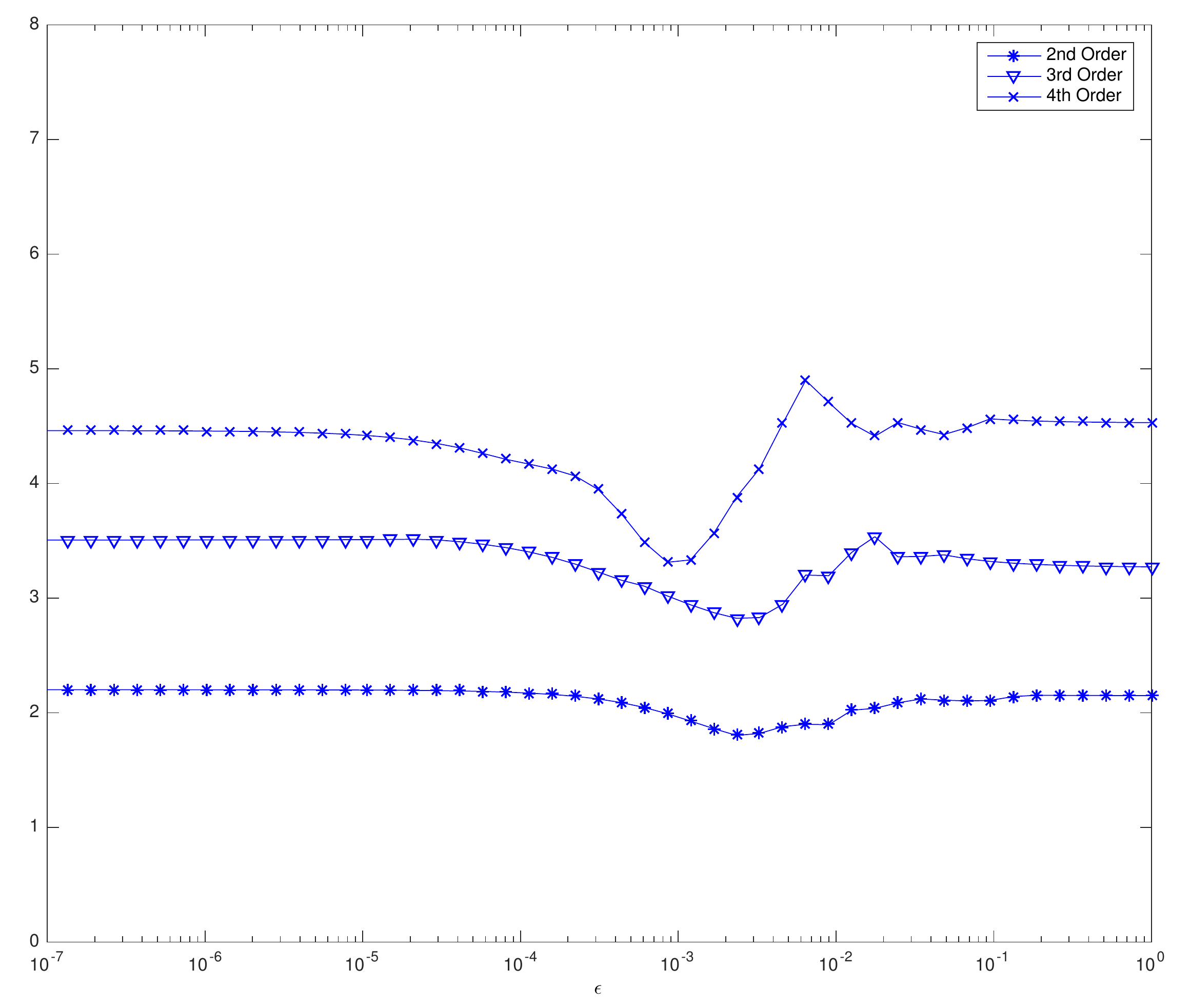}
\centering
\includegraphics[width=3.0in]{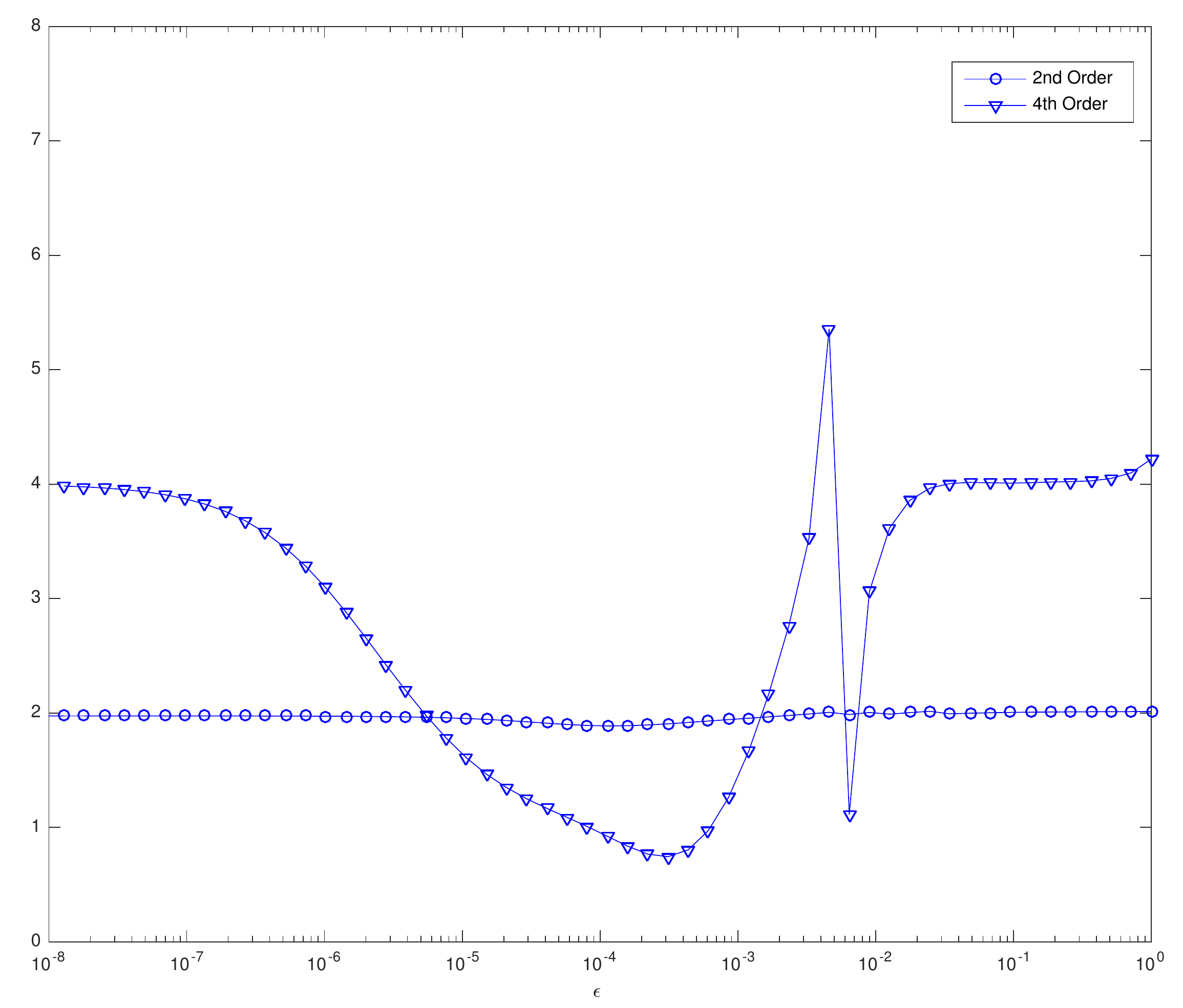}
\caption{Example \ref{NHSwR}. Convergence rate for $h$-component versus $\varepsilon$.
Left: for the method InDC-IMEX1-M-k, with $M=2, 3, 4$ corresponding to $k = 1, 2, 3$ corrections respectively. 
Right: second order IMEX2-ARS method GSA and InDC-IMEX2-ARS-GSA-M-k, with {$M=4$} and $k = 1$ correction.
}
\label{fig6}
\end{figure}

We show now that this observation is not in contradiction with the theoretical prediction. 
We start observing  that if $\eps \gg h$ then a classical analysis can be used to estimate the global error, i.e. the classical global error due to the presence of the small parameter $\eps$ is of the order $\mathcal{O}(h/\eps)^p$, with $p$ the order of the scheme. On the other hand, as a natural consequence of the Proposition \ref{classP},  if $\eps \ll h$, the global error satisfies (\ref{1_0est}).  As both estimates about the global error have to be satisfied (see Figure \ref{figErr}), we have: 
\begin{eqnarray}\label{estimateE}
\max_\eps err_\eps  \lesssim  \max_\eps \left( \min \left( \mathcal{O}\left(\frac{h}{\eps}\right)^p, \mathcal{O}(h^p) + \mathcal{O}(\eps h)\right)\right). 
\end{eqnarray}
Since the estimate is based on classical order (when $h \ll \eps$) and on the asymptotic expansion in $\eps$ (when $h \gg \eps$), 
we do not expect it to be sharp in intermediate regime, when $\eps \approx h$. 
Then a simple calculation shows that the uniform order is $\mathcal{O}(h^\frac{2p}{p+1})$, where the worst case takes place where $\mathcal{O}(\eps h) = \mathcal{O}((h/\eps)^p)$, i.e., for $\eps = \eps^*$ with
\begin{eqnarray}\label{eps_star}
\eps^* = \mathcal{O}(h^\frac{p-1}{p+1}).
\end{eqnarray}
Therefore we get 
$\max_\eps err_\eps =  \mathcal{O}(h^\frac{2p}{p+1})$.
This argument can be extended  to InDC IMEX R-K methods using the estimates (\ref{final_estimate}) given in the Theorem \ref{thm: IDC_IMEX_R-K}
In this case we get that the uniform order is 
\begin{eqnarray}\label{Uer2}
\max_\eps err_\eps = \mathcal{O}(h^\frac{2r}{r+1})
 \end{eqnarray}
 where $r = \min(M,s_k)$ is the classical order of the InDC IMEX R-K. 
 
{In order to show that our numerical observations in Fig.~\ref{fig6} are not in contradiction with the theoretical prediction eq.~(\ref{Uer2}) of the uniform order, we produce two error plots of the fourth order method (InDC-IMEX2-ARS-GSA-M-k, with $M = 4$ and $k = 1$) in Fig.~\ref{fig6bis}. 
On the left panel, for each curve and for each value of $\eps$, we plot the $L_1$ error, which is computed by comparing solutions for different values of time step, i.e. $E_i = \|S_{h/2^i}-S_{h/2^{i-1}}\|_{L^1}$, for $i = 1, 2, 3, 4$. The blue circles are the values corresponding to $\max_\varepsilon err_{\varepsilon}$. 
A lack of accuracy is observed in the intermediate regime when $h$ is approximately of the same order of $\eps$
On the right panel of Fig. \ref{fig6bis}, we act differently: for each $h$ we plot $\eps^* = arg (\max_\varepsilon err_{\varepsilon}(h))$ (identified by stars) and $err_{\varepsilon^*}$ (identify by circles) as a function of $\Delta t$. We compare it with the theoretical estimate (\ref{Uer2}). 
Using least square best fit on the four blue points, the computed uniform order appears to be $1.92$ while theoretical estimate (\ref{Uer2}) predicts a uniform order $1.6$ {(blue continuous line in Fig. \ref{fig6bis})} for $r =  4$. Likewise, a best fit for four red stars $\eps^{*} \propto h^{\alpha}$, gives $\alpha = 0.78$, while the theoretical prediction given in Eq. (\ref{eps_star}) gives $\alpha = (r-1)/(r+1) = 0.6$ {(red continuous line in Fig. \ref{fig6bis})} with $r = 4$. A similar behavior is obtained when adopting a third order scheme $r  = 3$ using a InDC-IMEX1- M-k, with $M = 3$ and $k =2$. The computed uniform order is approximately $1.9$, while theoretical prediction gives $1.5$. There is a small discrepancy between the computed values and the theoretical prediction. 
Such small discrepancy suggests that the estimate for the uniform order is not sharp. The reason is not fully understood and deserves deeper analysis.
}



\begin{figure}
\centering
\includegraphics[width=3.1in]{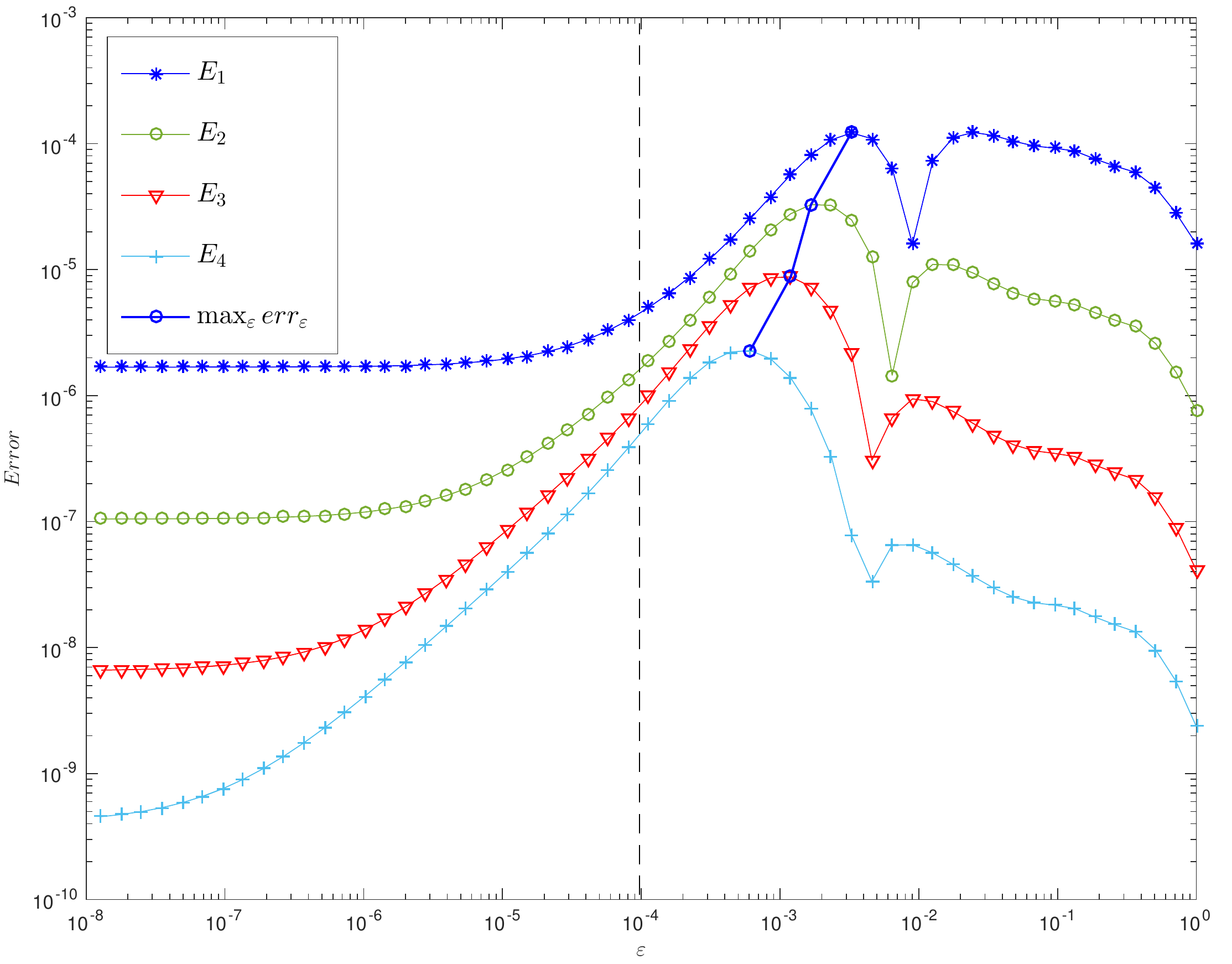}
\includegraphics[width=3.0in]{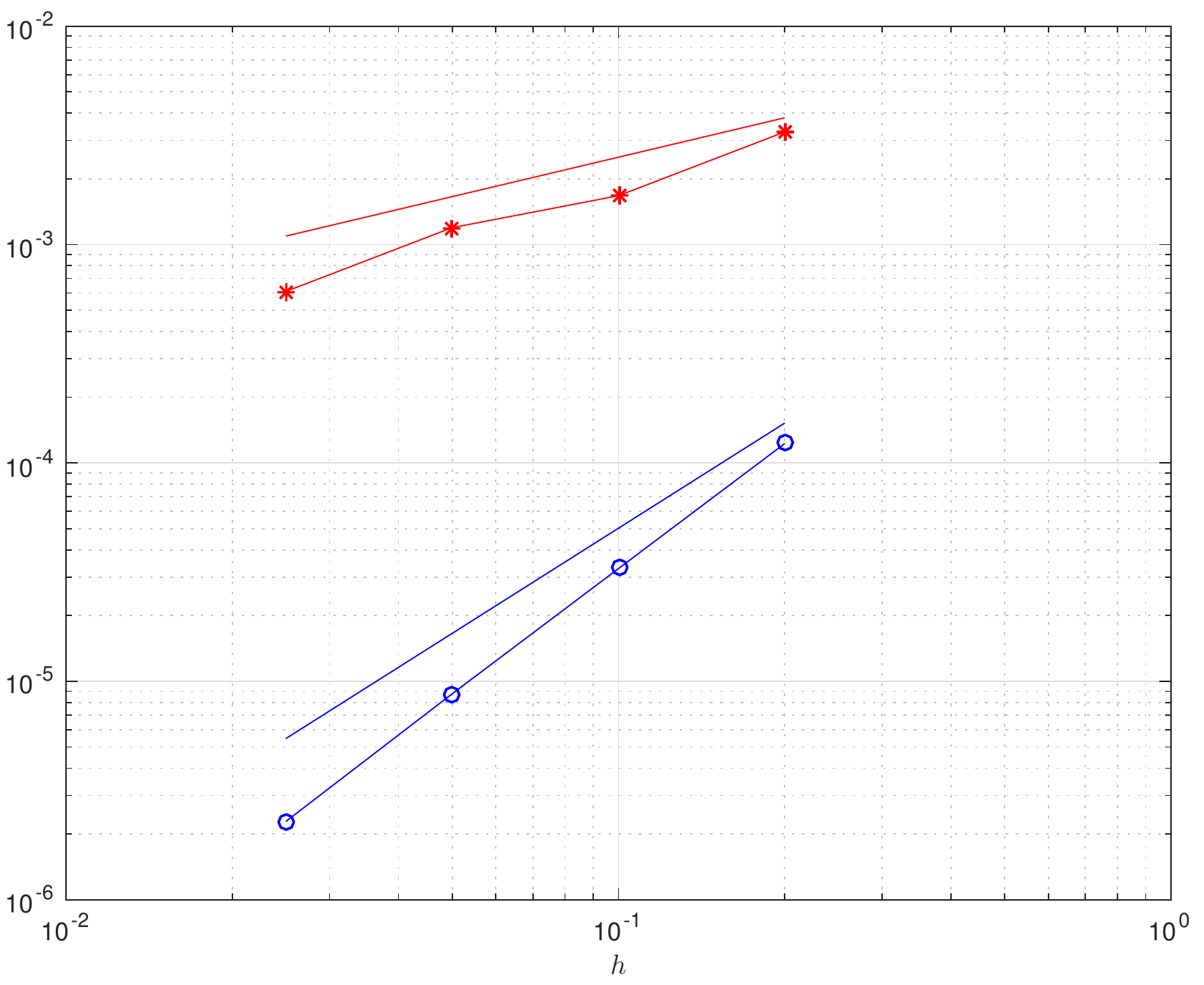}
\caption{Example \ref{NHSwR}. Logarithm scale for the error versus the parameter $\varepsilon$ with $10^{-8} \le \varepsilon \le 1$ (left panel) and 
$\eps^* = arg (\max_\varepsilon err_{\varepsilon})$ versus $h$ (identified by $-*$) and $err(\eps^*)$ versus $h$ (identified by "o") (right panel). The reference lines are linear curves with reference slopes $0.6$ and $1.6$ respectively. We used  InDC-IMEX2-ARS-GSA-M-k, with {$M=4$} and $k = 1$ correction.
}
\label{fig6bis}
\end{figure}
\begin{figure}
\centering
\includegraphics[width=5.0in]{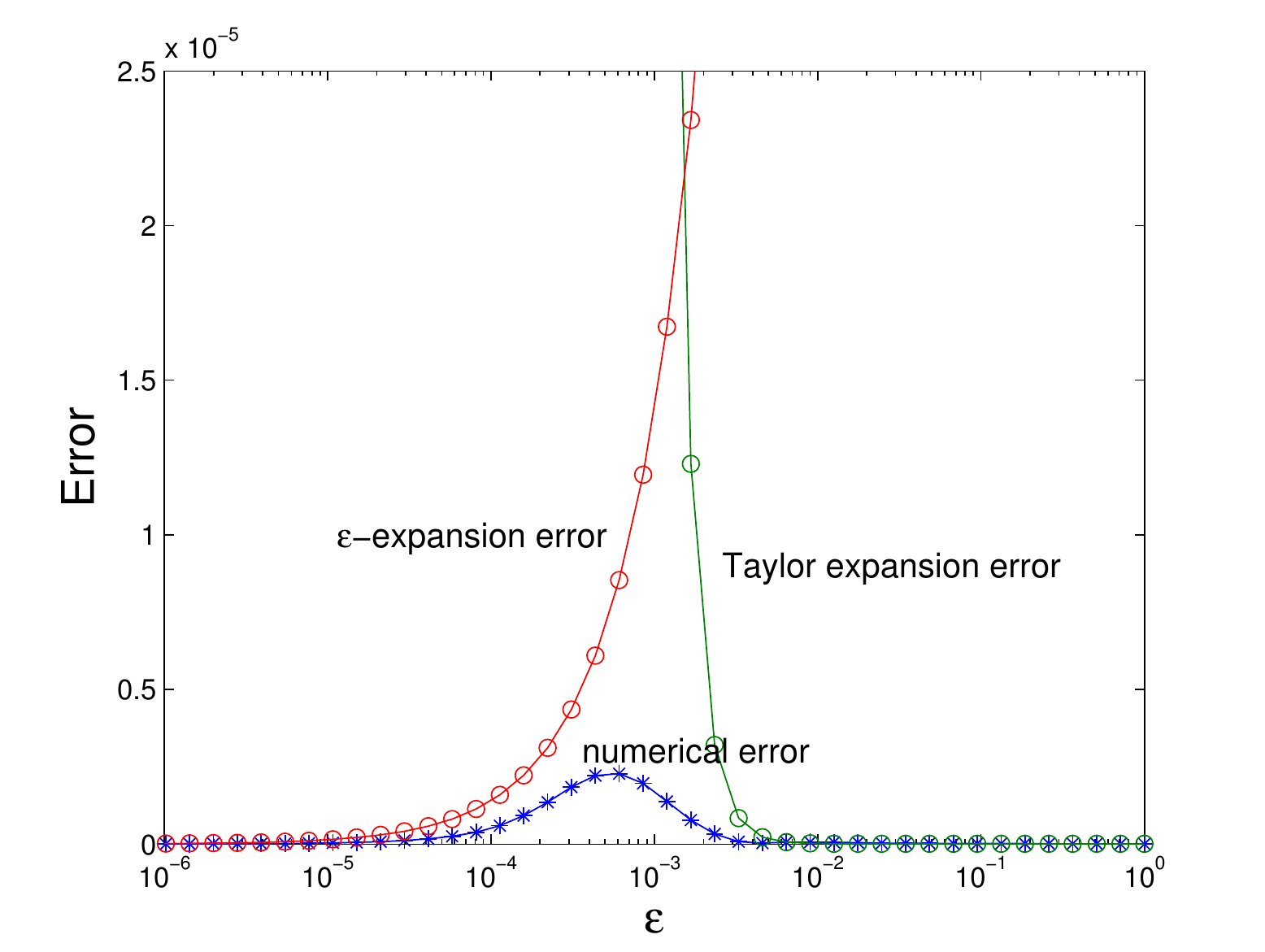}
\caption{
Numerical global error $(*)$  and theoretical global errors $(\circ)$ versus $\eps$.
In this Figure ``Taylor expansion error'' represents the global error given by the classical error analysis when $\eps \gg h$, and the ``$\eps$-expansion error'' the global one given by the asymptotic analysis when $\eps \ll h$ . The ``numerical error'' has been computed by a fourth order scheme, i.e. InDC-IMEX2-ARS-GSA-M-k, with M = 4 and k = 1 correction. This plot is consistent with the estimate (\ref{estimateE}) with $p = r = \min(M,s_k) = 4$.}
\label{figErr}
\end{figure}

\end{exa}

\section{Conclusions}
\label{sec6}
\setcounter{equation}{0}
\setcounter{figure}{0}
\setcounter{table}{0}

This paper studies the order of convergence of InDC-IMEX methods when applied to SSPs, using uniform distribution of quadrature points excluding the left-most point. 
Since InDC methods have a similar structure to R-K methods \cite{christlieb2009comments}, we construct the InDC-IMEX R-K methods as IMEX R-K methods with enlarged assembled Butcher tables and apply the convergence results in \cite{boscarino2008error} directly to the InDC-IMEX R-K methods.  Theoretical results on global error estimates in the form of $\varepsilon$-expansion are presented.
The InDC-IMEX schemes are applied to the classical Van der Pol equations and PDE systems in order to illustrate our theoretical findings. In particular the order reduction phenomenon is observed as expected for intermedia $\eps$, while high order convergent rates are observed in the asymptotic limit when $\varepsilon \to 0$ and when $\eps = \mathcal{O}(1)$ (similarly as in \cite{boscarino2010class}).

We also pointed out that  the globally stiffly accurate property of the IMEX R-K scheme is an important assumption: the fact that an IMEX RK is GSA guarantees that the assembled matrix of the corresponding InDC IMEX R-K scheme based on it is invertible, and this in turn implies that the high order accuracy in the limit as $\varepsilon\to 0$ is maintained. 
Note that the assumption of GSA provides a sufficient condition to guarantee the invertibility of the assembled matrix.
Although we do not know whether GSA property is necessary, we showed an example of InDC scheme based on a non GSA IMEX R-K, which lacks such asymptotic accuracy property.

Furthermore, we showed that even if InDC IMEX R-K can in principle be re-written as IMEX-RK with many stages, the actual implementation of such schemes as InDC IMEX R-K illustrated in the paper provides a systematic way to construct high order IMEX RK schemes, which would be far too complicated to write down as standard IMEX-RK schemes. 

%
%
\section*{Acknowledgments}
The research has been partially funded by 
ITN-ETN Marie-Curie Horizon 2020 program ModCompShock, {\em Modeling and computation of shocks and interfaces}, Project ID: 642768; 
by project F.I.R. 2014 {\em Charge transport in graphene and low dimensional systems}, University of Catania; and by INDAM-GNCS 2017 research project {\em Numerical methods for hyperbolic and kinetic equation and applications.}

\bibliographystyle{siam}
\bibliography{refer}

\end{document}